\newcommand{\supp}{{\rm supp}}
\newcommand{\hg}{\widehat{g}}
\newcommand{\hphi}{\widehat{\phi}}  
\newcommand{\FD}{\mathcal{F}} 
\newcommand\be{\begin{equation}}
\newcommand\ee{\end{equation}}
\newcommand\bea{\begin{eqnarray}}
\newcommand\eea{\end{eqnarray}}
\newcommand\bi{\begin{itemize}}
\newcommand\ei{\end{itemize}}
\newcommand\ben{\begin{enumerate}}
\newcommand\een{\end{enumerate}}
\newcommand\bc{\begin{center}}
\newcommand\ec{\end{center}}
\newcommand\ba{\begin{array}}
\newcommand\ea{\end{array}}
\def\notdiv{\ \mathbin{\mkern-8mu|\!\!\!\smallsetminus}}
\newcommand{\R}{\ensuremath{\mathbb{R}}}
\newcommand{\Q}{\mathbb{Q}}
\newcommand{\foh}{\frac{1}{2}}  
\newtheorem{thm}{Theorem}[section]
\newtheorem{lem}[thm]{Lemma}
\theoremstyle{definition}
\newtheorem{rek}[thm]{Remark}
\newcommand{\twocase}[5]{#1 \begin{cases} #2 & \text{{\rm #3}}\\ #4
&\text{{\rm #5}} \end{cases}   }
\newcommand{\gep}{\epsilon}
\newcommand{\gl}{\lambda}
\newcommand{\ga}{\alpha}
\numberwithin{equation}{section}
\begin{document}

\title[An Orthogonal Test of the $L$-Functions Ratios Conjecture]{An
Orthogonal Test of the $L$-Functions Ratios Conjecture}

\author{Steven J. Miller}
\email{Steven.J.Miller@williams.edu} \address{\noindent Department of Mathematics, Brown University, Providence, RI 02912 and Department of Mathematics and Statistics,
Williams College, Williamstown, MA 02167}

\subjclass[2000]{11M26 (primary), 11M41, 15A52 (secondary).}
\keywords{$1$-level density, cuspidal newforms, low lying
zeros, Ratios Conjecture}

\date{\today}

\thanks{We thank Eduardo Due$\tilde{{\rm n}}$ez, David Farmer, Duc Khiem Huynh, Jon Keating and Nina Snaith for many enlightening conversations. Some of this work was carried out while the author was visiting the University of Bristol, and it is a pleasure to thank them for their hospitality. This work was partly
supported by NSF grant DMS0600848.}

\begin{abstract} We test the predictions of the $L$-functions Ratios Conjecture for the family of cuspidal newforms of weight $k$ and level $N$, with either $k$ fixed and $N\to\infty$ through the primes or $N=1$ and $k\to\infty$. We study the main and lower order terms in the 1-level density. We provide evidence for the Ratios Conjecture by computing and confirming its predictions up to a power savings in the family's cardinality, at least for test functions whose Fourier transforms are supported in $(-2, 2)$. We do this both for the weighted and unweighted 1-level density (where in the weighted case we use the Petersson weights), thus showing that either formulation may be used. These two 1-level densities differ by a term of size $1/\log (k^2 N)$. Finally, we show that there is another way of extending the sums arising in the Ratios Conjecture, leading to a different answer (although the answer is such a lower order term that it is hopeless to observe which is correct).
\end{abstract}


\maketitle

\setcounter{equation}{0}


\section{Introduction}

Zeros of $L$-functions are some of the most important objects in modern number theory. Numerous problems are connected to them, and frequently the more detailed information we have about zeros, the more we can say about difficult problems. We remark on just a few of these applications. We then discuss a new procedure to predict these properties, and discuss our tests of its predictions.

The Generalized Riemann Hypothesis (GRH) asserts that all non-trivial zeros of an $L$-function have real part $1/2$. Just knowing that there are no zeros on the line $\Re(s) = 1$ for $\zeta(s)$ suffices to prove the Prime Number Theorem. Similarly the non-vanishing of  Dirichlet $L$-functions at $s=1$ imply the infinitude of primes in arithmetic progression (see for example \cite{Da}).

Assuming GRH, all the non-trivial zeros lie on the line $\Re(s)=1/2$. We can thus ask more refined questions about their spacing. The Grand Simplicity Hypothesis asserts that the imaginary parts of zeros of Dirichlet $L$-functions are linearly independent over $\Q$; this is one of the key inputs in Rubinstein and Sarnak's \cite{RubSa} analysis of Chebyshev's bias, the observed preponderance of primes in some arithmetic progressions over others.

Finally, Conrey and Iwaniec \cite{CI} show that if a positive percentage of the spacings between normalized zeros of certain $L$-functions is less than half the average spacing, then the class number of $\Q(\sqrt{-q})$ satisfies $h(q) \gg \sqrt{q} (\log q)^{-A}$ for some $A>0$.

Since the 1970s, random matrix theory has provided powerful models to predict the behavior of zeros of $L$-functions. The scaling limits of zeros of individual or of a family of $L$-functions are well-modeled by the scaling limits of eigenvalues of classical compact groups (see for example \cite{CFKRS,Hej,KaSa1,KaSa2,KeSn1,KeSn2,KeSn3,Mon,Od1,Od2}). In particular, these models immediately imply that a positive percentage of zeros are less than half the average spacing apart.

While the corresponding classical compact group is naturally connected to the monodromy group in the function field case, the connection is far more mysterious for number fields. Further, these models often add the number theoretic pieces in an ad-hoc manner, and thus there is a real need to develop methods which naturally incorporate the arithmetic.\footnote{See \cite{DM2} for some recent results on determining the symmetry group of convolutions of families, and \cite{GHK} for an alternate approach which is a hybrid of the Euler product and the Hadamard expansion, which has the advantage of the arithmetic arising naturally.}

In this work we concentrate on one such approach, the $L$-functions Ratios Conjecture of Conrey, Farmer and Zirnbauer \cite{CFZ1,CFZ2}, which provides a recipe for predicting many properties of $L$-functions to a phenomenal degree, ranging from $n$-level correlations and densities to moments and mollifiers (see \cite{CS} for numerous applications).

In \cite{Mil4} we showed that the Ratios Conjecture successfully predicts all lower order terms up to size $O(N^{-1/2+\gep})$ in the 1-level density for certain families of quadratic Dirichlet characters, at least provided the Fourier transform of the test function is supported in $(-1/3, 1/3)$. In this paper we apply the Ratios Conjecture to families of cuspidal newforms. We chose these families as the 1-level density can be determined for test functions whose Fourier transform is supported in $(-2, 2)$.\footnote{If we assume Hypothesis S from \cite{ILS}, we can extend the number theory calculations up to $(-22/9, 22/9)$; see \eqref{eq:hypothesisS}.} To prove results for support exceeding $(-1, 1)$ requires us to take into account non-diagonal terms, specifically sums of Bessel functions and Kloosterman sums. Thus our hope is that this will be a very good test of the Ratios Conjecture.

\subsection{Notation}

We first set some notation. Let $f \in S_k(N)$, the space of cusp forms of weight $k$ and level $N$, let $\mathcal{B}_k(N)$ be an orthogonal basis of $S_k(N)$, and let $H^\star_k(N)$ be the subset of newforms. To each $f$ we associate an $L$-function:
\begin{equation}
L(s,f)\ =\ \sum_{n=1}^\infty \lambda_f(n) n^{-s}.
\end{equation}
The completed $L$-function is
\begin{equation}\label{eq:completed_L_func}
\Lambda(s,f) \ =\ \left(\frac{\sqrt{N}}{2\pi}\right)^s
\Gamma\left(s+\frac{k-1}{2}\right) L(s,f),
\end{equation}
and satisfies the functional equation $\Lambda(s,f) =
\epsilon_f \Lambda(1-s,f)$ with $\epsilon_f = \pm 1$. Thus
$H^\star_k(N)$ splits into two disjoint subsets, $H^+_k(N) = \{
f\in H^\star_k(N): \epsilon_f = +1\}$ and $H^-_k(N) = \{ f\in
H^\star_k(N): \epsilon_f = -1\}$. We often assume the Generalized Riemann Hypothesis (GRH), namely that all non-trivial zeros of $L(s,f)$ have real part $1/2$.

From Equation $2.73$ of \cite{ILS} we have for $N > 1$ that
\begin{equation}\label{eq:number of terms in hkpm}
|H_k^\pm(N)| \ = \ \frac{k-1}{24}N + O\left( (kN)^{\frac{5}{6}}
\right).
\end{equation} If $N=1$ then $H_k^+(1)=H_k^\ast(1)$ if $k\equiv 0 \bmod 4$ and $H_k^-(1)=H_k^\ast(N)$ if $k \equiv 0 \bmod 4$, where $|H_k^\ast(1)| = \frac{k-1}{12}+O(k^{2/3})$.

We let $D_{1,H_k^\ast(N);R}(\phi)$ denote the weighted 1-level density for the family $H_k^\ast(N)$: \be
D_{1,H_k^\ast(N);R}(\phi) \ = \ \sum_{f\in H_k^\ast(N)} \omega_f^\ast(N)
\sum_{\gamma_f \atop L(1/2+i\gamma_f,f) = 0} \phi\left(\gamma_f
\frac{\log R}{2\pi}\right). \ee We discuss the weights $\omega_f^\ast(N)$ in greater detail in \S\ref{sec:weights}, and $R = k^2 N$ is the analytic conductor, which is constant throughout the family.\footnote{It greatly simplifies our analysis to have a family where the analytic conductors are constant. This allows us to pass the summation over the family past the test function to the Fourier transforms. Non-constant families can often be handled, at a cost of additional work and sieving (see for example \cite{Mil1}).} Katz and Sarnak \cite{KaSa1, KaSa2} conjectured that as the conductors tend to infinity, the 1-level density agrees with the scaling limit of a classical compact group. There are now many cases where, for suitably restricted test functions, we can show agreement between the main terms and the conjectures; see, for example \cite{DM1,FI,Gao,Gu,HR,HM,ILS,KaSa2,Mil1,OS,RR,Ro,Rub,Yo2}. Now that the main terms have been successfully matched in numerous cases, it is natural to try to analyze the lower order terms. Here we break universality. While the arithmetic of the family does not enter into the main terms, it does surface in the lower order term (see for example \cite{FI,Mil2,Mil3,Mil4,Yo1}).

The Ratios Conjecture is a recipe for predicting the main and lower order terms (often up to square-root in the family's cardinality) for ratios of $L$-functions. Consider a family $\FD$ of $L$-functions with some weights $\omega_f$. We shall be particularly interested in both \be R_\FD(\alpha,\gamma) \ = \ \sum_{f \in \mathcal{F}} \omega_f
\frac{L(\foh+\alpha,f)}{L(\foh + \gamma,f)} \ee and $\partial R_\FD(\alpha, \gamma)/\partial \alpha \Big|_{\alpha=\gamma=s}$. We are interested in the derivative as a contour integral of it yields the 1-level density.

\subsection{Weights}\label{sec:weights}

To simplify some of the arguments, we content ourselves with investigating two cases: $k$ is fixed and $N\to\infty$ through the primes\footnote{With additional work, the arguments should generalize to $N$ square-free.}, and $N=1$ and $k \to \infty$.
Throughout our analysis we shall need to investigate sums such as \be \sum_{f \in H_k^\ast(N)} \gl_f(m) \gl_f(n). \ee It is technically easier to consider weighted sums \be \sum_{f \in H_k^\ast(N)} \omega_f(N) \gl_f(m) \gl_f(n), \ee where the $\omega_f(N)$ are the harmonic (or Petersson) weights, though for completeness we study the unweighted sums as well. These are defined by \be \omega_f^\ast(N) \ =\ \frac{\Gamma(k-1)}{(4\pi)^{k-1} (f,f)_N},  \ee where \be (f,f)_N \ = \ \int_{\Gamma_0(N)\setminus \mathbb{H}} f(z) \overline{f}(z) y^{k-2} dxdy. \ee These weights are almost constant. We have the bounds (see \cite{HL,Iw}) \be N^{-1-\gep}\ \ll_k \ \omega_f^\ast(N) \ \ll_k \ N^{-1+\gep}; \ee if we allow ineffective constants we can replace $N^\gep$ with $\log N$ for $N$ large.

The main tool for evaluating these (weighted) sums is the Petersson formula; we state several useful variants in Appendix \ref{sec:PeterssonFormula}.

If $N>1$ we should use modified weights $\omega_f(N) / \omega(N)$, where \be \omega(N)\ =\ \sum_{f\in H_k^\ast(N)} \omega_f(N).\ee The reason is that our family does not include the oldforms. One advantage of restricting to $N$ prime is that the only oldforms in $S_k(N)$ are forms of level $1$. We know there are only $O(k)$ such forms. As each $\omega_f(N) \ll N^{-1+\gep}$, \be \sum_{f\in H_k^\ast(N)} \omega_f(N) \ = \ \sum_{f\in S_k(N)} \omega_f(N) + O\left(\frac{k}{N^{1-\gep}}\right) \ = \ 1 + O\left(\frac{k}{N^{1-\gep}}\right). \ee Thus for $k$ fixed and $N\to\infty$, the difference between using $\omega_f(N)$ and $\omega_f(N)/\omega(N)$ is $O(N^{-1+\gep})$. We set \be\label{eq:omegaastfN} \twocase{\omega_f^\ast(N) \ = \ }{\omega_f(1)}{if $N=1$}{\omega_f(N)/\omega(N)}{if $N>1$;} \ee note \be\label{eq:omegaastfNb} \sum_{f \in H_k^\ast(N)} \omega_f^\ast(N) \ = \ 1 \ = \ \left(1 + O\left(N^{-1+\gep}\right)\right) \sum_{f \in \mathcal{B}_k(N)} \omega_f(N). \ee

\begin{rek} For some problems, such as bounding the order of vanishing at the central point for families of cuspidal newforms \cite{HM, ILS}, it is desirable to study the unweighted family. We shall see below that there is a difference of size $1/\log R$ between the weighted and unweighted 1-level densities. The predictions from the Ratios Conjecture (for weighted and unweighted families) agrees with number theory in both cases.\end{rek}

\subsection{Main results}

\begin{thm}\label{thm:ratiosconj1level1}
Assume GRH for $\zeta(s)$ and all
$L(s,f)$ with $f\in H_k^\ast(N)$. The Ratios Conjecture predicts
\bea\label{eq:d1hkastnrphitemp2} & & D_{1,H_k^\ast(N);R}(\phi) \ = \
2\sum_p \hphi\left(\frac{2 \log p}{\log R}\right) \frac{\log
p}{p\log R} + M(\phi) \nonumber\\
& &  \ \ \ \ \ +\ \frac1{\log R} \int_{-\infty}^\infty \left(2 \log
\frac{\sqrt{N}}{\pi} + \psi\left(\frac14 + \frac{k\pm 1}4
+\frac{2\pi i t}{\log R}\right) \right) \phi(t) dt \nonumber\\ & & \ \ \ \ \ + \ O\left((kN)^{-1/2+\gep}\right), \eea
where \bea M(\phi) & \ = \ &  \frac{2i^k \mu(N)}{N\log
R} \int_{-\infty}^\infty X_L\left(\foh + \frac{2\pi i t}{\log
R}\right) \frac{\zeta(2)}{\zeta(2+\frac{8\pi i t}{\log
R})}\ \zeta\left(1+\frac{4\pi i t}{\log R}\right) \nonumber\\ & & \ \
\ \ \ \cdot \ \prod_p \left(1 - \frac{p^{4\pi i t/\log
R}-1}{p(p^{1+4\pi i t/\log R}+1)}\right) e^{-2\pi i t \frac{\log N}{\log R}} \phi(t) dt \eea and the factor $\frac{\mu(N)}{N}\exp\left(-2\pi i t \log N/ \log R\right)$ is not present if $N=1$. If $N > 1$ then $M(\phi) \ll N^{-1}$. Let $\supp(\hphi) \subset (-\sigma, \sigma)$. If $N=1$ then $M(\phi) \ll 2009^k k^{-\frac{1-4\sigma}3k}$, which decays more rapidly than $k^{-\delta}$ for any $\delta > 0$ provided $\sigma < 1/4$.
\end{thm}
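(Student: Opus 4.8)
The plan is to start from the Ratios Conjecture recipe applied to $R_\FD(\alpha,\gamma)$ for the family $H_k^\ast(N)$ with Petersson weights, then extract the 1-level density by the standard contour-integral argument: write the density as a sum over zeros, use the argument principle and the explicit formula to turn this into $\frac{1}{2\pi i}\oint \frac{\partial}{\partial\alpha}R_\FD(\alpha,\gamma)\big|_{\alpha=\gamma=s}$ integrated against $\phi$, and shift contours. The inputs are the approximate functional equation for $L(\foh+\alpha,f)$, the expansion of $1/L(\foh+\gamma,f)$ as a Dirichlet series with coefficients $\mu_f(n)$, and the averaging formula $\sum_{f\in H_k^\ast(N)}\omega_f^\ast(N)\lambda_f(m)\lambda_f(n)$, which by the Petersson formula (Appendix \ref{sec:PeterssonFormula}) equals $\delta_{m,n}$ plus a Bessel–Kloosterman term. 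The recipe instructs us to keep only the diagonal $\delta_{m,n}$ contribution and then analytically continue the resulting arithmetic sum; the off-diagonal piece is precisely what the Ratios Conjecture discards (and what one must control separately in the honest number-theoretic computation, but here we are only deriving the Conjecture's prediction).

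The key steps, in order: (i) substitute the approximate functional equation, separating the ``$L(\foh+\alpha)$'' sum into a main piece and a dual piece carrying the sign $\epsilon_f$, which on averaging over the family produces the root-number average $\sum_f \omega_f^\ast(N)\epsilon_f \lambda_f(\ell)$ --- by the Petersson formula with the $\epsilon_f = i^k \mu(N)/\sqrt{N}$ normalization for $N$ prime (or $i^k$ for $N=1$) this gives the factor $i^k\mu(N)/N$ in $M(\phi)$; (ii) carry out the $n$-sum in $\sum_{m}\lambda_f(m)m^{-(1/2+\alpha)}\sum_n \mu_f(n)n^{-(1/2+\gamma)}$ against the diagonal, recognizing the resulting Euler product as $\zeta(1+\alpha+\gamma)$ times the arithmetic factor $A(\alpha,\gamma)$ whose specialization yields the $\prod_p(1-\cdots)$ in the theorem; (iii) similarly handle the dual term, where the Gamma-factor ratio $X_L(\foh+\alpha)$ from the functional equation \eqref{eq:completed_L_func} enters, producing the $\zeta(2)/\zeta(2+\tfrac{8\pi it}{\log R})$ and $\zeta(1+\tfrac{4\pi it}{\log R})$ structure; (iv) differentiate in $\alpha$, set $\alpha=\gamma=s$, shift the $s$-contour to the critical line $s=\tfrac{2\pi i t}{\log R}$, and pick up the contributions: the pole of $\zeta$ at $s=0$ gives the first prime sum $2\sum_p\hphi(\tfrac{2\log p}{\log R})\tfrac{\log p}{p\log R}$, the Gamma-factor gives the $\psi$-term and the $2\log(\sqrt N/\pi)$, and the dual term survives as $M(\phi)$; (v) finally, bound $M(\phi)$ in the two regimes.

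The main obstacle --- and the only part requiring real work rather than bookkeeping --- is step (v), the size estimate for $M(\phi)$ when $N=1$. Here $\mu(N)/N$ and the decaying exponential are absent, so $M(\phi)$ does not obviously shrink; one must use the rapid decay of $\hphi$ together with the behavior of $X_L(\foh+\tfrac{2\pi it}{\log R})$, which by Stirling involves $\Gamma$-factors with argument $\tfrac{k}{2}$ and hence a factor roughly $(k/2\pi e)^{-2\cdot 2\pi i t/\log R}\cdot(\text{something})$; writing $\log R = \log(k^2)$ and tracking how the $t$-integral localizes to $|t|\lesssim \sigma$, one finds $X_L$ contributes a factor of size about $k^{-c}$ with $c$ linear in how far we integrate, and assembling the constants gives the stated $2009^k k^{-\frac{1-4\sigma}{3}k}$, which beats $k^{-\delta}$ once $\sigma<1/4$. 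The $N>1$ bound $M(\phi)\ll N^{-1}$ is immediate from the explicit $\mu(N)/N$ prefactor once one checks the remaining integral is $O(1)$, uniformly, using $\zeta(1+it)\ll \log(2+|t|)$ and absolute convergence of the Euler product in a neighborhood of the line.
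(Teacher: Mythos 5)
Your overall architecture (approximate functional equation, M\"obius expansion of the denominator, Petersson diagonal, differentiation, contour integral against $\phi$) is the paper's route, but two steps as you describe them would not go through. First, a bookkeeping error: the prime sum $2\sum_p \hphi\left(\frac{2\log p}{\log R}\right)\frac{\log p}{p\log R}$ does \emph{not} come from a pole of $\zeta$ at $s=0$. It is the direct Fourier transform of the term $\sum_p \log p/p^{1+2r}$, which is the logarithmic derivative of the diagonal Euler product $\prod_p\left(1-p^{-1-\alpha-\gamma}+p^{-1-2\gamma}\right)$ (a product that equals $1$ at $\alpha=\gamma$, so it is not ``$\zeta(1+\alpha+\gamma)$ times an arithmetic factor''). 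The integrand is regular at $r=0$, the contour is moved to the critical line without crossing any singularity, and the pole of $\zeta(1+4\pi it/\log R)$ at $t=0$ contributes nothing because $\phi$ is even. The only zeta factor whose structure matters in the differentiation is the $1/\zeta(1-\alpha+\gamma)$ pulled out of the \emph{dual} term, via $\frac{d}{d\alpha}\left[f(\alpha,\gamma)/\zeta(1-\alpha+\gamma)\right]\big|_{\alpha=\gamma=r}=-f(r,r)$; the factors $\zeta(2)/\zeta(2+\frac{8\pi it}{\log R})$ and $\zeta(1+\frac{4\pi it}{\log R})$ appearing in $M(\phi)$ are a post-differentiation rewriting of $\prod_p\left(1+\frac{1}{(p-1)p^{2r}}\right)$ done purely to improve convergence.

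The serious gap is your step (v) for $N=1$, which you correctly identify as the heart of the theorem but argue by a mechanism that fails. On the critical line one has $\left|X_L\left(\foh+\frac{2\pi it}{\log R}\right)\right|=O(1)$ uniformly: the two Gamma arguments are complex conjugates (up to the pairing $\pm t$) and the power of $\sqrt{N}/2\pi$ is unimodular, so Stirling gives \emph{no} decay in $k$ there, and the $t$-integral does not ``localize to $|t|\lesssim\sigma$'' in any way that produces a power of $k$ --- the trivial bound is only $M(\phi)=O(1)$. The actual argument requires shifting the $t$-contour into the lower half-plane, replacing $t$ by $t-iw\frac{\log R}{4\pi}$ with $w=\frac{2k-1}{3}$ (permissible since no poles of the Gamma quotient or of the zeta factors are crossed for $w<2k-1$, and the zeta pole is harmless by evenness of $\phi$). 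Off the line the Gamma ratio becomes essentially $1/\Gamma\left(\frac{2k-1}{6}\right)\ll k^{-k/3}$ (the paper's Lemma \ref{lem:xlgammafactors}, via the Beta-integral identity), but this gain is paid for by the growth of the analytically continued test function, $\phi\left(t-iw\frac{\log R}{2\pi}\right)\ll R^{\sigma w}=k^{4k\sigma/3}$ (Lemma \ref{lem:decayphi}). Balancing these gives exactly $2009^k k^{-\frac{(1-4\sigma)k}{3}}$ and is the sole source of the restriction $\sigma<1/4$; without the contour shift there is no route from your sketch to any bound decaying in $k$. (The $N>1$ case is as you say: the explicit $1/N$ prefactor plus an $O(1)$ bound on the remaining integral, after subtracting the zeta pole.)
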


\begin{rek} Our estimate for $M(\phi)$ is significantly worse when $N=1$; see Remark \ref{rek:whyn=1harderntoinfinity} for an explanation and a connection to other problems. Interestingly, if we change the order of some of the steps in the Ratios Conjecture's recipe, then $M(\phi)$ changes by a factor of $e^{-\gamma}$. See Appendix \ref{sec:mertenssumextension} for complete details, as well as Remark \ref{rek:egammaMertens}.
\end{rek}

The 1-level density computation has some differences depending on whether or not $N\to\infty$ through the primes or $N=1$ and $k\to\infty$. We therefore separate our results into two cases. Further, we can often obtain results for smaller support without assuming GRH for Dirichlet $L$-functions, and thus we isolate these as well.

\begin{thm}\label{thm:1ldnumbthN} Let $\supp(\hphi) \subset (-\sigma, \sigma)$ and let $N\to\infty$ through the primes.

\bi

\item (\emph{Density Theorem Limited}) If $\sigma < 3/2$ then the weighted 1-level density for the family $H_k^\ast(N)$ agrees with the prediction from the Ratios Conjecture up to errors of size $O(N^{\sigma-\frac32+\gep}$ $+$ $N^{\frac{\sigma}2-1+\gep}$ $+$ $N^{\frac{\sigma}{4}-1+\gep''})$.

\item (\emph{Density Theorem Extended}) Assuming GRH for $\zeta(s)$, all Dirichlet $L$-functions and $L(s,f)$, the weighted 1-level density agrees with the prediction  the Ratios Conjecture up to errors of size $O(N^{\frac{\sigma}2-1+\gep} + N^{\frac{\sigma}{4}-1+\gep''})$.

\ei
\end{thm}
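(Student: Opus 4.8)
The plan is to carry out the explicit number-theory computation of the weighted 1-level density and then compare it, term by term, with the expansion predicted by the Ratios Conjecture in Theorem \ref{thm:ratiosconj1level1}. First I would apply the explicit formula, writing each sum over zeros $\sum_{\gamma_f}\phi(\gamma_f \tfrac{\log R}{2\pi})$ in terms of $\hphi$ evaluated at $\tfrac{\log p}{\log R}$ against $\lambda_f(p)$ and $\lambda_f(p^2)$, plus the archimedean contribution $\tfrac1{\log R}\int \bigl(2\log\tfrac{\sqrt N}{\pi} + \psi(\tfrac14+\tfrac{k\pm1}4+\tfrac{2\pi i t}{\log R})\bigr)\phi(t)\,dt$ that already appears verbatim in Theorem \ref{thm:ratiosconj1level1}. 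Weighting by $\omega_f^\ast(N)$ and using \eqref{eq:omegaastfNb} to replace it by $\omega_f(N)$ at the cost of $O(N^{-1+\gep})$, I then invoke the Petersson formula (Appendix \ref{sec:PeterssonFormula}) to evaluate $\sum_f \omega_f(N)\lambda_f(p)$ and $\sum_f \omega_f(N)\lambda_f(p^2)$. The diagonal term of Petersson produces the main prime sums, including the $2\sum_p \hphi(\tfrac{2\log p}{\log R})\tfrac{\log p}{p\log R}$ and the dual-sum piece $M(\phi)$ coming from the $\epsilon_f$-twisted average with the $\mu(N)/N$ and $i^k$ factors; the off-diagonal term contributes a sum of Bessel functions $J_{k-1}(\cdot)$ and Kloosterman sums $S(1,1;c)$.

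The crux is bounding the off-diagonal (Bessel–Kloosterman) contribution as a function of the support $\sigma$. For the $\lambda_f(p)$-sum, writing $p = a^2$ is impossible when $N$ is prime and $p$ ranges over primes, so the relevant cutoff is governed by the length of the $p$-sum, which is $R^\sigma = N^\sigma$, against the modulus $c \equiv 0 \bmod N$ in Petersson. Applying the Weil bound $|S(1,1;c)| \ll c^{1/2+\gep}$ together with the standard decay/oscillation estimates for $J_{k-1}(4\pi\sqrt p/c)$ (as in \cite{ILS}), one finds the $\lambda_f(p)$ off-diagonal is $O(N^{\sigma/2 - 1 + \gep})$, and the $\lambda_f(p^2)$ off-diagonal is $O(N^{\sigma/4 - 1 + \gep''})$ — these are exactly the two error terms that persist in both halves of the theorem. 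The remaining difference between the two halves is in how the $\lambda_f(p^2)$ \emph{diagonal} term (which via $\lambda_f(p^2) = \lambda_f(p)^2 - 1$ feeds the $1/p$ prime sum out to length $N^{\sigma/2}$) is evaluated: without GRH for Dirichlet $L$-functions one must cut the $p^2 \le R^\sigma$ sum by hand and incurs an extra $O(N^{\sigma - 3/2 + \gep})$ from completing it — this is the source of the $\sigma < 3/2$ restriction in the \emph{Limited} case. Assuming GRH for $\zeta(s)$ and Dirichlet $L$-functions, one instead evaluates $\sum_{p}\tfrac{\log p}{p}p^{-2\pi i t/\log R}$ and the twisted variant $\sum_{p} \lag{\cdot}{p}\tfrac{\log p}{p}(\cdots)$ via contour shifts and the explicit formula for those $L$-functions, getting clean main terms (the $\zeta(1+\tfrac{4\pi i t}{\log R})$ and the Euler product in $M(\phi)$, and the $\psi$-term) with no $\sigma$-dependent loss beyond the Bessel–Kloosterman errors, yielding the \emph{Extended} statement.

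The final step is bookkeeping: assemble the archimedean term, the main prime sum, the computed $M(\phi)$, and the three error terms, and verify that what remains matches Theorem \ref{thm:ratiosconj1level1} identically — in particular that the number-theoretic prime sums reproduce the Ratios prediction's $\zeta(2)/\zeta(2+\tfrac{8\pi i t}{\log R})$, $\zeta(1+\tfrac{4\pi i t}{\log R})$, and Euler-product factors. I expect the main obstacle to be the careful treatment of the Kloosterman–Bessel sums for support beyond $(-1,1)$: one must exploit the oscillation of $J_{k-1}$ (via its integral representation or the asymptotics for small and large argument) rather than trivially bounding it, and track the interplay between the length $N^\sigma$ of the prime sum and the modulus constraint $N \mid c$ to land precisely on the exponents $\sigma/2 - 1$ and $\sigma/4 - 1$. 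A secondary subtlety is ensuring the $\gep$, $\gep''$ losses (the latter from the $p^2$-sum) are accounted for honestly and that the replacement $\omega_f^\ast(N)\to\omega_f(N)$ and the removal of oldforms are negligible against these errors.
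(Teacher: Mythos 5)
Your overall architecture (explicit formula, rewrite via $\lambda_f(p)$, $\lambda_f(p^2)-1$, and higher prime powers, then Petersson and a Bessel--Kloosterman analysis) matches the paper's proof, and you correctly locate the exponents $\sigma/2-1$ and $\sigma/4-1$ in the off-diagonal contributions of the $\lambda_f(p)$-sum $S_1(\phi)$ and the $\lambda_f(p^2)$-sum $S_2(\phi)$. However, you misattribute the third error term and the role of GRH for Dirichlet $L$-functions, and this is where your argument would go wrong. The $O(N^{\sigma-3/2+\gep})$ term does \emph{not} come from completing the diagonal $\lambda_f(p^2)$ sum: the diagonal piece $2\sum_p \hphi(2\log p/\log R)\frac{\log p}{p\log R}$ is automatically truncated at $p\le R^{\sigma/2}$ by the compact support of $\hphi$ and matches the Ratios prediction exactly, with no completion loss and no need for any hypothesis. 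The $N^{\sigma-3/2+\gep}$ term arises from the \emph{trivial} bound on the Bessel--Kloosterman part of $S_1(\phi)$: applying Lemma \ref{lem:Peterssonjustnewforms} crudely gives $S_1(\phi)\ll \sum_{p\le R^\sigma}\frac{\log R}{N\sqrt p}\bigl(\frac{\sqrt p}{\sqrt{N+\sqrt p}}+(pN)^\gep\bigr)\ll N^{\sigma-3/2+\gep}+N^{\sigma/2-1+\gep}$, and it is the first summand (from $\sum_{p\le N^\sigma}N^{-3/2}\log N$) that forces $\sigma<3/2$ in the Limited case.

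Consequently, the entire difference between the two halves of the theorem lies in the treatment of $S_1(\phi)$'s off-diagonal, not of any diagonal term. In the Extended case one opens the Kloosterman sums $S(1,p;c)$ into Dirichlet characters mod $c$ and uses GRH to estimate $\sum_{p\le x}\chi(p)\log p = \delta_\chi x + O(x^{1/2}\log^2 cx)$; feeding this into the ILS machinery (their Sections 5--7) gives $Q_k^\ast(1;c)\ll P^{1/2}(kN)^\gep(\log 2c)^{-2}\widetilde\gamma_k(z)$ and hence $S_1(\phi)\ll N^{\sigma/2-1+\gep}$, valid for all $\sigma<2$, which eliminates the $N^{\sigma-3/2}$ term. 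Your proposed mechanism --- evaluating $\sum_p\frac{\log p}{p}p^{-2\pi it/\log R}$ or a Legendre-symbol-twisted sum via GRH for Dirichlet $L$-functions --- is imported from the symplectic (quadratic character) setting of \cite{Mil4} and does not apply here; no such twisted prime sums occur in this family. You also omit $S_3(\phi)$ (the $\nu\ge3$ prime powers), which must be bounded ($\ll N^{\sigma/12-1+\gep''}$, dominated by the other errors), and note that on the number-theory side $M(\phi)$ does not appear at all --- it is a term of the Ratios prediction that is separately shown to be $O(1/N)$ for $N>1$, so the match holds within the stated error.
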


\begin{rek} Theorem \ref{thm:1ldnumbthN} implies we have agreement up to a power savings in $N$ for $\sigma < 3/2$, and up to square-root cancelation for $\sigma < 1$. Assuming GRH, we can extend agreement up to $\sigma < 2$, again saving a power in $N$.\end{rek}

\begin{thm}\label{thm:1ldnumbthk}  Let $\supp(\hphi) \subset (-\sigma, \sigma)$ $N=1$ and $k, K\to\infty$.

\bi

\item (\emph{Density Theorem Limited}) The weighted 1-level density for the family $H_k^\ast(1)$ agrees with the prediction from the Ratios Conjecture up to errors of size $O(k^{-(5-3\sigma)/6+\gep})$ for $\sigma < 1/4$. If we knew $M(\phi) \ll k^{-(5-3\sigma)/6+\gep}$ for $\sigma < 1$, then we would have agreement up to $\sigma < 1$.

\item (\emph{Density Theorem Extended}) Let $h$ be a Schwartz function compactly supported on $(0,\infty)$. Consider a weighted average (over $k$) of the weighted $1$-level density \bea \mathcal{A}^\ast(K;\phi) & \ = \ & \frac1{A^\ast(K)} \sum_{k\equiv 0 \bmod 2} \frac{24}{k-1} h\left(\frac{k-1}{K}\right) \sum_{f\in H_k^\ast(1)} D_{1,H_k^\ast(1);k^2}(\phi), \eea where \bea A^\ast(K) \ = \ \sum_{k\equiv 0 \bmod 2} \frac{24}{k-1} h\left(\frac{k-1}{K}\right) \left|H_k^\ast(1)\right| \ = \ \widehat{h}(0) K + O(K^{2/3}). \eea Assuming GRH for $\zeta(s)$, all Dirichlet $L$-functions and $L(s,f)$, the 1-level density agrees with the prediction the Ratios Conjecture up to errors of size $O(K^{-(5-\sigma)/6+\gep}$ $+$ $K^{\sigma-2+\gep})$ for $\sigma < 1/4$. If we knew $M(\phi) \ll K^{-(5-\sigma)/6+\gep}$ $+$ $K^{\sigma-2+\gep}$ for $\sigma < 2$, then we would have agreement up to $\sigma < 2$.
    
    \item (Hypothesis S and Density Theorem Extended) Assume Hypothesis S from \cite{ILS} (i.e., \eqref{eq:hypothesisS}) with $A=0$ and $\alpha = 1/2$. Then as $K\to\infty$ the weighted average (over $k$) of the weighted 1-level density agrees with the prediction from the Ratios Conjecture for $\sigma < 1/4$. If we knew $M(\phi)$ $\ll$ $K^{-2(2.5-\sigma)}$ $+$ $K^{-(5-\sigma)/6+\gep}$ $+$ $K^{-\frac{11}{2}(1-\frac{9}{22}\sigma)}$ for $\sigma < 22/9$, then we would have agreement up to $\sigma < 22/9$.
\ei

\end{thm}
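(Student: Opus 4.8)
The plan is to establish Theorem~\ref{thm:1ldnumbthk} by computing the weighted $1$-level density directly from the explicit formula and the Petersson formula, and then comparing the result term-by-term with the prediction of Theorem~\ref{thm:ratiosconj1level1}. First I would apply the explicit formula to each $f \in H_k^\ast(1)$, writing $\sum_{\gamma_f}\phi(\gamma_f \log R/2\pi)$ as the sum of an archimedean term (the integral of $\phi$ against $2\log(\sqrt N/\pi) + \psi(\tfrac14+\tfrac{k\pm1}4+\cdots)$, which is deterministic and matches the Ratios prediction immediately) plus the sum over primes of $\lambda_f(p^\nu)$-terms, weighted by $\hphi$ and $\log p/\log R$. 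Summing this against the Petersson weights $\omega_f^\ast(1)$ reduces everything to evaluating $\sum_f \omega_f^\ast(1)\lambda_f(m)$ for $m = p$ and $m = p^2$ (the $\nu \geq 3$ terms contribute a negligible error once $\supp(\hphi)$ is bounded). By the Petersson formula (Appendix~\ref{sec:PeterssonFormula}) this sum equals the diagonal $\delta_{m=\square}$ plus a Kloosterman--Bessel term $2\pi i^k \sum_{c} S(m,1;c)J_{k-1}(4\pi\sqrt m/c)/c$; the diagonal produces exactly the $2\sum_p \hphi(2\log p/\log R)\log p/(p\log R)$ main term in \eqref{eq:d1hkastnrphitemp2}, and the off-diagonal piece is precisely what must be shown to be $M(\phi)$ up to the claimed error.

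For the \emph{Density Theorem Limited} bullet, I would bound the Kloosterman--Bessel term crudely: using $|S(m,1;c)| \le c$ (trivially, or Weil for a saving we do not need here) and $|J_{k-1}(x)| \ll (x/(k-1))^{k-1}$ for $x$ small, the term for $m = p^\nu$ with $p^\nu < R^\sigma = k^{2\sigma}$ is of size roughly $(p^\nu)^{(k-1)/2}/(k-1)^{k-1}$ summed over $c \ge 1$, which after summing over primes with $\hphi$-support in $(-\sigma,\sigma)$ is $O(k^{-(5-3\sigma)/6+\gep})$ once one also tracks the $|H_k^\ast(1)|^{-1}$-type normalization and the $(kN)^{5/6}$ count error from \eqref{eq:number of terms in hkpm}. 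The point is that $J_{k-1}$ is exponentially small until its argument $4\pi\sqrt m/c$ is of order $k$, i.e.\ until $m \gg k^2 = R$; for $\sigma < 1$ the entire prime sum lives below this threshold, so the off-diagonal contributes only through $M(\phi)$, whose size is controlled by Theorem~\ref{thm:ratiosconj1level1}. This is why the conditional statement ``if we knew $M(\phi) \ll k^{-(5-3\sigma)/6+\gep}$ for $\sigma<1$'' immediately upgrades the range to $\sigma < 1$: the number-theory side already matches for all $\sigma < 1$, and the only obstruction is our weaker-than-desired bound on $M(\phi)$ itself.

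For the \emph{Density Theorem Extended} bullet I would perform the same computation but average over the weight $k$ against $\tfrac{24}{k-1}h((k-1)/K)$, which by the normalization $A^\ast(K) = \widehat h(0)K + O(K^{2/3})$ turns $\sum_f \omega_f^\ast(1) \lambda_f(p^\nu)$ into a smooth $k$-average of products $i^k J_{k-1}(\cdot)$. The gain here comes from Bessel-function averaging over $k$: summing $i^k J_{k-1}(x) h((k-1)/K)$ over even $k$ produces cancellation (and via the classical identity $\sum_k (2k) J_{2k}(x) = \tfrac x2 \sin(\cdot)$-type relations, or Poisson summation on $k$) that lets one push the prime support past the $\sqrt m \asymp k$ barrier; the analysis mirrors that of \cite{ILS}. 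I would then bound the resulting oscillatory Kloosterman sums over $c$ using the Weil bound, obtaining the stated $O(K^{-(5-\sigma)/6+\gep} + K^{\sigma-2+\gep})$ error for $\sigma < 1/4$, with the $K^{\sigma-2}$ term coming from the tail where $c$ is large relative to $\sqrt m$. Under Hypothesis~S from \cite{ILS}, the same averaging argument improves the admissible $\sigma$ because Hypothesis~S supplies exactly the cancellation in sums of Kloosterman sums needed to extend the off-diagonal estimate up to $\sigma < 22/9$; this gives the third bullet. The main obstacle throughout is the off-diagonal (Kloosterman--Bessel) analysis near and past the threshold $m \asymp R$: making the $k$-averaging genuinely effective, and honestly tracking all the error terms ($(kN)^{5/6}$ from the cardinality, $N^\gep$ versus $\log N$ in the Petersson weights, the $\nu \ge 3$ prime powers, and the contour-shift errors in relating $M(\phi)$ to the Ratios prediction) so that everything collapses to the clean exponents stated. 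Everything else—the archimedean term, the diagonal main term, and the identification of $M(\phi)$ with the Ratios output—is bookkeeping that follows the template of \cite{Mil4} and \cite{ILS}.
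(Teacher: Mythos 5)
Your overall strategy is the paper's: explicit formula plus the Petersson formula, with the diagonal giving the prime sum and archimedean terms of Theorem \ref{thm:ratiosconj1level1}; crude bounds for the limited range; averaging over $k$ to exploit Bessel oscillation (Corollary 8.2 of \cite{ILS}) for the extended range; Hypothesis S for the last bullet; and the correct key observation that the unconditional restriction to $\sigma<1/4$ comes entirely from the bound on $M(\phi)$ on the Ratios side (Lemma \ref{lem:Mphi}), not from the number theory. However, your accounting of where the stated error exponents come from has real gaps. In the Limited bullet, the $(kN)^{5/6}$ count error from \eqref{eq:number of terms in hkpm} plays no role in the \emph{weighted} density (the weights sum to $1$, and for $N=1$ all forms are new), and an exponentially small Bessel tail cannot be the source of a $k^{-(5-3\sigma)/6+\gep}$ error. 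In the paper that exponent is produced by the $\lambda_f(p^2)$ terms $S_2(\phi)$ (and the $\nu\ge 3$ terms $S_3(\phi)$), which are estimated via the cruder Petersson bound of Lemma \ref{lem:ilscor22} carrying the $k^{-5/6}$ factor (Lemma \ref{lem:S2S3Nk}); the $\lambda_f(p)$ sum $S_1(\phi)$ is the piece that is exponentially small for $\sigma<1$ (Lemma \ref{lem:ils23}). Your small-argument bound on $J_{k-1}$ could in fact be pushed through for $m=p^2$ and $p^\nu$ as well and would then give an acceptable (indeed smaller) error for $\sigma<1$, so this is repairable, but as written the derivation of the exponent is confused and the dominant term is misidentified.

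More seriously, in the Extended and Hypothesis-S bullets you cannot dispose of the off-diagonal analysis by saying it "mirrors \cite{ILS}." After averaging over $k$, the $\lambda_f(p^2)$ piece (Kloosterman sums $S(1,p^2;c)$ against $I(4\pi p/c)$) is bounded in \cite{ILS} only by $O(\log\log K/\log K)$, which is far from the power savings asserted in the theorem; since you are tracking lower order terms, you need a dedicated argument for it. The paper supplies one: insert the asymptotic expansion of $I(x)$, bound the Kloosterman sum trivially by $c^{1/2+\gep}$, and use $\hbar(v)\ll v^{-A}$ together with the extra $1/\sqrt{p}$ and the shorter prime sum $p\le R^{\sigma/2}$ to make this contribution negligible (and similarly in the Hypothesis-S case, where it yields $K^{(\delta-1/2)\sigma-2\delta}$). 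You also need the small but necessary remark that the analytic conductors vary with $k$ under the $h((k-1)/K)$ average, so one must justify replacing them by $R=K^2$ up to constants. Without these steps the claimed errors $O(K^{-(5-\sigma)/6+\gep}+K^{\sigma-2+\gep})$ and the Hypothesis-S exponents $K^{-2(2.5-\sigma)}+K^{-(5-\sigma)/6+\gep}+K^{-\frac{11}{2}(1-\frac{9}{22}\sigma)}$ do not follow from your outline.
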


\begin{rek} Theorem \ref{thm:1ldnumbthk} implies we have agreement up to a power savings in $K$ for $\sigma < 1/4$; in fact, we agree \emph{beyond} square-root cancelation in this range. Assuming GRH, by averaging over $k$  we can extend our calculations up to $\sigma < 2$ (or, if we assume Hypothesis S, up to $\sigma < 22/9$). If we knew $M(\phi)$ were small, we would again save a power in $N$ (with agreement up to square-root cancelation for $\sigma < 3/2$ if we assume GRH for Dirichlet $L$-functions, or up to $\sigma < 20/9$ if we assume Hypothesis S). \end{rek}

\begin{thm}\label{thm:1ldnumbthunweighted} Assume GRH for $\zeta(s)$, all Dirichlet $L$-functions and all $L(s,f)$. The unweighted 1-level density for $H_k^\ast(N)$ agrees with the predictions of the Ratios Conjecture for the unweighted family, up to a power savings in the family's cardinality, as $N\to\infty$ through the primes; this answer differs from the weighted 1-level density by an additional term of size $1/\log R$.
The Ratios Conjecture applied to the unweighted family predicts
\bea\label{eq:d1hkastnrphitemp2unwt} D_{1,H_k^\ast(N);R}^{{\rm unwt}}(\phi) & \ = \ & \frac1{\log R}
\int_{-\infty}^\infty \left(2 \log \frac{\sqrt{N}}{\pi} +
\psi\left(\frac14 + \frac{k\pm 1}4 +\frac{2\pi i t}{\log R}\right)
\right) \phi(t) dt \nonumber\\ & & \ \ \  +2 \sum_{\nu \equiv 0 \bmod 2 \atop \nu \ge 2} \sum_{p \neq N} \frac{p-1}{p^\nu} \hphi\left(\nu \frac{\log p}{\log R}\right)\frac{\log p}{\log R}\nonumber\\ & &  \ \ \ \ \ + \ O\left((kN)^{-1/2+\gep}\right), \eea which agrees with number theory up to errors of size $O(N^{-(2-\sigma)/6+\gep})$.
\end{thm}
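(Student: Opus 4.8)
The plan is to run the standard explicit-formula computation for the unweighted $1$-level density of $H_k^\ast(N)$ and then show it matches the Ratios-Conjecture prediction displayed in \eqref{eq:d1hkastnrphitemp2unwt}, which itself is obtained by the usual recipe with the unweighted (i.e. essentially constant) weights. First I would write the unweighted $1$-level density as $|H_k^\ast(N)|^{-1}\sum_{f\in H_k^\ast(N)}\sum_{\gamma_f}\phi(\gamma_f\log R/2\pi)$ and apply the explicit formula, turning it into the archimedean term (the $\Gamma$-factor / $\psi$-function piece, which is deterministic and identical to the weighted case) plus a sum over primes $p$ and powers $\nu\ge 1$ of $\hphi(\nu\log p/\log R)\cdot\frac{\log p}{\log R}\cdot p^{-\nu/2}\cdot\big(|H_k^\ast(N)|^{-1}\sum_f \lambda_f(p^\nu)\big)$ (with the $p=N$ terms and sign terms handled separately since $\lambda_f(N)=\pm N^{-1/2}$ on newforms of prime level). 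The point is that the $\nu$ \emph{odd} contributions vanish in the limit — for $\nu=1$ because $\sum_f\lambda_f(p)$ has square-root-type cancellation, and more generally Hecke-relations reduce odd $\nu$ to traces of $\lambda_f(p)$ — while the $\nu$ \emph{even} contributions leave the main term $2\sum_{\nu\ge 2,\,\text{even}}\sum_{p\neq N}\frac{p-1}{p^\nu}\hphi(\nu\log p/\log R)\frac{\log p}{\log R}$, exactly the second line of \eqref{eq:d1hkastnrphitemp2unwt}.

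The key input for the unweighted average is to remove the Petersson weights. I would use the standard device from \cite{ILS}: write $\omega_f^{-1}$ (up to normalization) in terms of the symmetric-square $L$-value $L(1,\mathrm{sym}^2 f)$, or equivalently expand $1 = \omega_f\cdot\omega_f^{-1}$ and use the identity $\sum_f \omega_f \lambda_f(m) L(1,\mathrm{sym}^2 f)^{\pm 1}\cdots$ together with an approximate functional equation / Dirichlet-series expansion for $L(s,\mathrm{sym}^2 f)^{\pm 1}$ and then the Petersson formula (Appendix \ref{sec:PeterssonFormula}) term by term. This converts the unweighted sum into a main diagonal contribution (giving the $\frac{p-1}{p^\nu}$ arithmetic factor, the extra $-1/p$ relative to the weighted density being precisely the source of the claimed $1/\log R$ discrepancy) plus off-diagonal Kloosterman/Bessel terms bounded via Weil and the large sieve as in \cite{ILS}, which is what forces $\sigma<2$ and produces the error $O(N^{-(2-\sigma)/6+\gep})$; GRH for $\zeta$ and Dirichlet $L$-functions enters in controlling the resulting sums over primes and in bounding the symmetric-square pieces. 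Finally I would verify on the Ratios-Conjecture side that running the recipe with the trivial (unweighted) weights reproduces \eqref{eq:d1hkastnrphitemp2unwt} and, comparing with Theorem \ref{thm:ratiosconj1level1}, that the difference is the single term $\frac{1}{\log R}$ coming from the $\zeta(2)$-type factor degenerating to $\sum_p \frac{\log p}{p(p-1)} \hphi(0)\frac{1}{\log R}$-style corrections; this is a short bookkeeping computation.

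The main obstacle is the off-diagonal analysis needed to push the support to $(-2,2)$ in the unweighted setting: once the Petersson weights are stripped, the surviving Kloosterman-sum sums are weighted by coefficients of $L(s,\mathrm{sym}^2 f)^{-1}$ (a less benign Dirichlet series than the one appearing in the weighted case), and one must still extract square-root cancellation uniformly. I expect to handle this exactly as in \cite{ILS} — opening the symmetric-square factor into a short Dirichlet polynomial with $\gep$-loss, swapping the sum over $f$ for Kloosterman terms, and invoking Weil's bound plus the spectral large sieve — with GRH for Dirichlet $L$-functions used to keep the length of that polynomial short enough that the cross terms are $O(N^{-(2-\sigma)/6+\gep})$. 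Everything else (the archimedean term, the $p=N$ and $\epsilon_f$ contributions, and the matching against the Ratios prediction) is routine given the weighted case already established in Theorems \ref{thm:ratiosconj1level1} and \ref{thm:1ldnumbthN}.
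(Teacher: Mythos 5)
Your number-theory side follows essentially the same route as the paper: apply the explicit formula, remove the Petersson weights by the ILS device (the decomposition $\Delta_{k,N}^\ast=\Delta_{k,N}'+\Delta_{k,N}^\infty$, i.e.\ opening the symmetric-square factor into a Dirichlet polynomial of length $Y$ and applying the Petersson formula term by term), bound the Bessel--Kloosterman terms as in \cite{ILS} under GRH, and observe that the even prime powers now contribute because the unweighted average of $\lambda_f(p^{2\ell})$ is $\approx p^{-\ell}$ (this is \eqref{eq:unweightedDeltaknastn}); balancing the complementary-sum error $N^{\gep}Y^{-1/2}$ against the non-complementary error $N^{\sigma/2-1+\gep}Y$ (the paper takes $X=N-1$, $Y=N^{(2-\sigma)/3}$) is exactly what yields $O(N^{-(2-\sigma)/6+\gep})$ for $\sigma<2$. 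Two small inaccuracies there do not affect the outcome: odd $\nu\ge 3$ are negligible simply because $p^\nu$ is not a perfect square (no Hecke reduction to $\lambda_f(p)$ is needed), and no spectral large sieve is used --- GRH for Dirichlet $L$-functions enters through expanding the Kloosterman sums into multiplicative characters, while GRH for $L(s,f)$ (or the symmetric squares) controls the complementary sum.

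The genuine weak point is the Ratios side, which you dismiss as ``a short bookkeeping computation'' in which a ``$\zeta(2)$-type factor degenerates'' to a correction proportional to $\hphi(0)/\log R$. That mechanism is wrong in form and would not reproduce \eqref{eq:d1hkastnrphitemp2unwt}: the new terms carry $\hphi\left(\nu\frac{\log p}{\log R}\right)$ for every even $\nu\ge 2$, not $\hphi(0)$. What actually changes is step 3 of the recipe (the family average). For the unweighted family the average of $\lambda_f(n)$ is no longer negligible when $n$ is a perfect square, so the diagonal evaluation in Lemma \ref{lem:ratiosconjRalphagamma} changes: the first Euler product becomes $\prod_p\bigl(1-\frac{p+1}{p}\,p^{-(1+\alpha+\gamma)}+p^{-(1+2\gamma)}\bigr)\bigl(1-p^{-(2+2\alpha)}\bigr)^{-1}$, whose $\alpha$-derivative at $\alpha=\gamma=r$ is $\sum_p\frac{(p-1)\log p}{p^{2+2r}-1}$ in place of $\sum_p\frac{\log p}{p^{1+2r}}$ in Lemma \ref{lem:Rprimealphagammar}; the contour integral then produces precisely $2\sum_{\nu\ {\rm even},\ \nu\ge2}\sum_p\frac{p-1}{p^\nu}\hphi\left(\nu\frac{\log p}{\log R}\right)\frac{\log p}{\log R}$, and comparison with Theorem \ref{thm:ratiosconj1level1} gives the stated $1/\log R$ discrepancy. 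If you run the recipe ``with trivial weights'' but keep the weighted (Petersson-diagonal) averaging, you simply recover the weighted prediction and the match with your number-theory computation fails; so this step needs to be carried out with the square-term averages fed in, as above, rather than deferred as bookkeeping.
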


\begin{rek} Theorem \ref{thm:1ldnumbthunweighted} implies that the predictions from the $L$-functions Ratios Conjecture agree with number theory for both the weighted and unweighted families. Thus,
when investigating cuspidal newforms, we may study either family.  \end{rek}

The paper is organized as follows. In \S\ref{sec:ratiosconjintro} we describe the Ratios Conjecture's recipe, and determine its prediction for the 1-level density for our families. In \S\ref{sec:1ldfromratiosconjexpansiontheorems} we analyze these predictions and prove Theorem \ref{thm:ratiosconj1level1}. In \S\ref{sec:onelevelfromnt} we prove Theorems \ref{thm:1ldnumbthN} and \ref{thm:1ldnumbthk}, which show the 1-level densities agree (up to a power savings in the cardinality of the families, at least for suitably restricted test functions) with what can be proved. Finally, in \S\ref{sec:extendingsuppunweightedS1} we analyze the unweighted 1-level density, and prove Theorem \ref{thm:1ldnumbthunweighted}.


\section{Ratios Conjecture}\label{sec:ratiosconjintro}

The Ratios Conjecture is a recipe to predict the main and lower order terms for a variety of problems. We analyze its predictions for the 1-level density for families of cuspidal newforms. We first briefly describe its recipe for predicting quantities related to \be R_\FD(\alpha,\gamma) \ = \ \sum_{f \in \mathcal{F}} \omega_f
\frac{L(\foh+\alpha,f)}{L(\foh + \gamma,f)}. \ee

\ben

\item Use the approximate functional equation to expand the numerator into two sums plus a remainder. The first sum is over $m$ up to $x$ and the second over $n$ up to $y$, where $xy$ is of the same size as the analytic conductor (typically one takes $x=y$). We ignore the remainder term.

\item Expand the denominator by using the generalized Mobius function: \be \frac1{L(s,f)} \ = \ \sum_h \frac{\mu_f(h)}{h^s}, \nonumber\ \ee where $\mu_f(h)$ is the multiplicative function equaling 1 for $h=1$, $-\lambda_f(p)$ if $h=p$, $\chi_0(p)$ if $h=p^2$ (with $\chi_0$ the trivial character modulo $N$) and $0$ otherwise.

\item Execute the sum over $\mathcal{F}$, keeping only main (diagonal) terms.

\item Extend the $m$ and $n$ sums to infinity (i.e., complete the products).

\item Differentiate with respect to the parameters, and note that the size of the error term does not significantly change upon differentiating.

\item A contour integral involving $\frac{\partial}{\partial \alpha}R_\FD(\alpha,\gamma)\Big|_{\alpha=\gamma=s}$ yields the 1-level density.

\een

We now describe these steps in greater detail and deduce the Ratios Conjecture's prediction for the 1-level density.

\begin{rek} It is almost miraculous how well the Ratios Conjecture works, given that several of the steps involve throwing away significant error terms. The miracle is that all these errors seem to cancel, and the resulting expression is correct to a remarkable order. See Remark \ref{rek:commenterrorsextendingprod} for more details. \end{rek}

\begin{rek}\label{rek:diffexpranalyerror} Differentiating is essentially harmless because we have analytic functions. If the error were $N^{-1/2} \cos(N^2 \alpha)$  and $\alpha$ was forced to be real, then differentiating increases the error from size $N^{-1/2}$ to $N^{3/2}$! For us, $\alpha$ will be complex. By Cauchy's integral theorem, if $f$ is analytic at $z_0$ then \be f'(z_0) \ = \ \frac1{2\pi i} \oint_C \frac{f(z)}{(z-z_0)^2} dz, \ee where $C$ is a circle of very small radius about $z_0$. The sum of the ratios is analytic, and we shall see later that the main term is analytic. Thus their difference, the error term, is also analytic. Applying Cauchy's argument with a circle of very small radius, say $\log^{-2009} R$, we see the effect of differentiating is only to increase the error by some powers of $\log R$. We thank David Farmer for pointing this out to us.
\end{rek}

\subsection{Approximate Functional Equation}

We state the approximate functional equation in greater generality than we need, though not the greatest
generality possible; see Section 1 of \cite{CFKRS} for more details.
Let \be L(s)\ =\ \sum_{n=1}^\infty \frac{a_n}{n^s} \ee be a nice
$L$-function with real coefficients ($a_n \in \R$), \be \gamma_L(s)
\ = \ P(s) Q^s \prod_{j=1}^w \Gamma(w_j s + \mu_j) \ee with $Q, w_j
> 0$, $\mu_j \ge 0$ and $P(s)$ a real polynomial whose zeros in
$\Re(s) > 0$ are at the poles of $L(s)$ (so if $L(s)$ has no poles
then $P(s)$ is constant). Let \be \xi_L(s) \ = \ \gamma_L(s) L(s) \
= \ \epsilon \overline{\xi}_L(1-s) \ee be the completed
$L$-function, with $|\epsilon| = 1$ the sign of the functional
equation and $\overline{\xi}_L(s) = \overline{\xi_L(\overline{s})}$.
Our assumptions imply that $\overline{\xi}_L(s) = \xi_L(s)$. Set \be
X_L(s) \ = \ \frac{\gamma_L(1-s)}{\gamma_L(s)} \ = \
\frac{P(1-s)Q^{1-s} \prod_{j=1}^w \Gamma(w_j(1-s)+\mu_j)}{P(s)Q^s
\prod_{j=1}^w \Gamma(w_js + \mu_j)}. \ee Then

\begin{lem}[The Approximate Functional Equation]\label{lem:approxfnaleq}
Notation and assumptions as above, \be L(s) \ = \ \sum_{m \le x}
\frac{a_m}{m^s} + \epsilon X_L(s) \sum_{n \le y}
\frac{a_n}{n^{1-s}} + {\rm remainder}, \ee where $xy$ is of the same size as the analytic conductor.
\end{lem}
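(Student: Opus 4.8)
The plan is to obtain this by the standard Mellin--Perron contour shift applied to the completed $L$-function, exactly as in Section~1 of \cite{CFKRS}; I would only sketch the mechanism. Fix an even entire function $G(z)$ with $G(0)=1$ that decays rapidly in vertical strips, and a large $c>0$ for which $\Re(s)+c$ and $1-\Re(s)+c$ both exceed the abscissa of absolute convergence of $\sum_n a_n n^{-s}$. The starting point is to shift the contour in
\be
\frac{1}{2\pi i}\int_{(c)} \xi_L(s+z)\,G(z)\,x^{z}\,\frac{dz}{z}
\ee
from $\Re(z)=c$ down to $\Re(z)=-c$: inside this strip the integrand has only the simple pole at $z=0$, with residue $\xi_L(s)$, together with a pole at $z=1-s$ when $L(s)$ has a pole at $s=1$. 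The latter residue is a ``polar term'' which is absent in our application (each $L(s,f)$ is entire) and which one folds into the remainder in general. Dividing by $\gamma_L(s)$, this writes $L(s)$ as the difference of the integrals over $\Re(z)=c$ and $\Re(z)=-c$, up to the polar term.

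Next I would evaluate the two integrals. On $\Re(z)=c$, expand $L(s+z)=\sum_m a_m m^{-s-z}$ (absolutely convergent), interchange summation and integration, and factor out $\gamma_L(s)$; this produces $\sum_m a_m m^{-s}\,V_s(m/x)$ with the smooth weight $V_s(\xi)=\frac{1}{2\pi i}\int_{(c)}\frac{\gamma_L(s+z)}{\gamma_L(s)}\,G(z)\,\xi^{-z}\,\frac{dz}{z}$. On $\Re(z)=-c$, first use the functional equation together with the reality hypotheses ($\overline{\xi}_L=\xi_L$) to write $\xi_L(s+z)=\epsilon\,\gamma_L(1-s-z)\sum_n a_n n^{s+z-1}$, where the $n$-sum now converges absolutely; then substitute $z\mapsto -z$, using that $G$ is even, and interchange. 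After dividing by $\gamma_L(s)$ and writing $\gamma_L(1-s)/\gamma_L(s)=X_L(s)$, this contributes $\epsilon\,X_L(s)\sum_n a_n n^{s-1}\,W_s(nx)$ with $W_s(\xi)=\frac{1}{2\pi i}\int_{(c)}\frac{\gamma_L(1-s+z)}{\gamma_L(1-s)}\,G(z)\,\xi^{-z}\,\frac{dz}{z}$. Hence $L(s)=\sum_m a_m m^{-s}V_s(m/x)+\epsilon\,X_L(s)\sum_n a_n n^{s-1}W_s(nx)$, up to the polar term, which already has the shape of the claim.

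The remaining step is to recognize $V_s,W_s$ as essentially sharp cutoffs and to read off the truncation lengths. By Stirling, $\gamma_L(s+z)/\gamma_L(s)$ and $\gamma_L(1-s+z)/\gamma_L(1-s)$ behave on the relevant part of the contour like $B(s)^{z}$, where $B(s)$ is comparable to the square root of the analytic conductor $C(s)$ (for $s$ near the critical line $|s|\asymp|1-s|$, so the two base scalings agree up to constants). Shifting the contour in the definition of $V_s$ past $z=0$ and picking up the residue $G(0)=1$ shows $V_s(\xi)=\mathbf{1}_{\{\xi<B(s)\}}$ up to an error that, for every $A>0$, is $O_{A}\big(\min(\xi/B(s),\,B(s)/\xi)^{A}\big)$, and similarly for $W_s$; so the first sum is effectively over $m\le x\,B(s)$ and the second over $n\le B(s)/x$. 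Calling these two lengths the $x$ and $y$ of the statement, their product is $xy\asymp B(s)^{2}\asymp C(s)$, the analytic conductor, as asserted (taking the Mellin parameter equal to $1$ yields the symmetric case $x=y\asymp\sqrt{C(s)}$). The quantities discarded -- the difference between the smooth weights and the sharp indicators, the rapidly decaying tails, and the polar term when present -- constitute the ``remainder''; since the Ratios recipe throws it away there is no need to optimize, and a convexity (or even trivial) bound for $L$ together with Stirling for $\gamma_L$ shows it is of lower order.

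The only real point requiring care is bookkeeping: the Mellin parameter is not literally the truncation length, the interchanges of sum and integral are legitimate only with $\Re(z)$ outside the critical strip (so one takes $c$ large and then shifts back inside when analyzing $V_s,W_s$), and one must verify both that the two truncation lengths multiply to the analytic conductor and that the discarded pieces genuinely form a lower-order remainder. None of this is deep -- it is precisely the computation carried out in detail in \cite{CFKRS} -- which is why in the body of the paper we simply cite that reference.
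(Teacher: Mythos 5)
Your sketch is correct and is precisely the standard Mellin--Perron argument from Section 1 of \cite{CFKRS}, which is the reference the paper cites in lieu of a proof (the lemma is stated here without one). The bookkeeping you flag — that the Mellin parameter is not the truncation length, and that the two effective lengths multiply to the analytic conductor — is handled correctly, so there is nothing to add.
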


\begin{rek} The Ratios Conjecture's recipe for generating predictions ignores the remainder term in the approximate functional equation. Thus we too shall ignore these errors in our arguments below, and treat the approximate functional equation as exact.
\end{rek}

For us, $L(s)$ will be a weight $k$ cuspidal newform of level $N$,
which we shall denote by $L(s,f)$. In this case, we have (see
\cite{ILS} for instance) that \bea \gamma_L(s) & \ = \ &
\left(\frac{2^k}{8\pi}\right)^{1/2}
\left(\frac{\sqrt{N}}{\pi}\right)^s \Gamma\left(\frac{s}2 +
\frac{k-1}4\right) \Gamma\left(\frac{s}2+\frac{k+1}4\right)
\nonumber\\ &=& \left(\frac{\sqrt{N}}{2\pi}\right)^s
\Gamma\left(s+\frac{k-1}2\right); \eea note that $\gamma_L(s)$
depends only on the weight $k$ and the level $N$ of the cuspidal
newform $f$. This yields the following expressions for $X_L(s)$:
\bea\label{eq:expansionsXLs} X_L(s)& \ = \ &
\left(\frac{\sqrt{N}}{\pi}\right)^{1-2s}
\frac{\Gamma\left(\frac{1-s}2+\frac{k-1}4\right)
\Gamma\left(\frac{1-s}2+\frac{k+1}4\right)}{
\Gamma\left(\frac{s}2+\frac{k-1}4\right)
\Gamma\left(\frac{s}2+\frac{k+1}4\right)}\nonumber\\ & \ = \ &
\left(\frac{\sqrt{N}}{2\pi}\right)^{1-2s}
\frac{\Gamma\left(\frac{1-s}2+\frac{k-1}2\right)}{
\Gamma\left(\frac{s}2+\frac{k-1}2\right)}. \eea

Finally, the analytic conductor of a cuspidal newform of weight $k$ and level $N$ is (up to a constant) $k^2N$. Thus we will typically take $x=y\sim \sqrt{k^2 N}$ in the approximate functional equation.

\subsection{Ratios Conjecture}

Let $\chi_0$ denote the principal character with conductor $N$. For
$f$ a weight $k$ cuspidal newform of level $N$ we have
\bea\label{eq:LsfInvLsf} L(s,f) & \ = \ & \prod_p \left(1 -
\frac{\lambda_f(p)}{p^s} + \frac{\chi_0(p)}{p^{2s}}\right)^{-1} \ =
\ \sum_{n=1}^\infty \frac{\lambda_f(n)}{n^s} \nonumber\\
\frac1{L(s,f)} &\ = \ & \prod_p \left(1 - \frac{\lambda_f(p)}{p^s} +
\frac{\chi_0(p)}{p^{2s}}\right) \ = \ \sum_{n=1}^\infty
\frac{\mu_f(n)}{n^s}, \eea where $\mu_f(n)$ is the multiplicative
function such that $\mu_f(1) = 1$, $\mu_f(p) = -\lambda_f(p)$,
$\mu_f(p^2) = \chi_0(p)$, and $\mu_f(p^k) = 0$ for $k \ge 3$.

Let $\mathcal{F}$ be a family of weight $k$ cuspidal newforms of
level $N$. The Ratios Conjecture for the family gives an expansion
for \be R_\FD(\alpha,\gamma) \ = \ \sum_{f \in \mathcal{F}} \omega_f
\frac{L(\foh+\alpha,f)}{L(\foh + \gamma,f)}, \ee where
$\alpha$ and $\gamma$ satisfy \ben \item $\Re(\alpha) \in (-1/4, 1/4)$; \item $\Re(\gamma) \in (1/\log |\FD|, 1/4)$; \item $\Im(\alpha), \Im(\gamma) \ll_\gep |\FD|^{1-\gep}$ for all $\gep > 0$. \een

We have introduced weights $\omega_f$, as often in practice the
weighted sum is significantly easier to control. For example, we may
take $\omega_f$ to be the Petersson weights, which facilitates
applying the Petersson formula (see Appendix \ref{sec:PeterssonFormula} for statements). As remarked in \S\ref{sec:weights}, it is convenient to choose $\omega_f = \omega_f^\ast(N)$ (see \eqref{eq:omegaastfN}).

We shall concentrate on the diagonal terms in the Petersson formula.
Thus if our family is $H_k^\ast(N)$ then by the Petersson formula we
have \emph{for $n_1$ and $n_2$ relatively prime to $N$}, \be \sum_{f
\in H_k^\ast(N)} \omega_f^\ast(N) \lambda_f(n_1) \lambda_f(n_2) \ = \
\delta_{n_1,n_2} + {\rm small}. \ee The weights are  normalized to sum to 1. If $N>1$ our sums do not include the oldforms; however, the oldforms do not contribute to the main term of the Petersson formula in this case.

In general, we must
be careful by what we mean by `small' when we apply the Petersson
formula. The error term is a Bessel-Kloosterman sum, and is
typically small only if $n_1$ and $n_2$ are not too large with
respect to $k$ and $N$ (and are relatively prime to $N$). It is
\emph{very} important that our sums are restricted. It is only after
we compute the main term that the heuristics of the Ratios
Conjecture tells us to extend the sums to infinity. Depending on how
(and when!) we extend our sums to infinity can lead to different
answers. \footnote{These differences, however, involve terms of size $1/N$, which is unimaginable beyond anything we can hope to prove. Interestingly, however, the difference between these two terms is related to sieving actual versus random primes. See Appendix \ref{sec:mertenssumextension}.}

Unless our family is all of $H_k^\ast(N)$ and $N=1$, however, the
sign of the functional equation is not constant. For square-free $N$
we have \be\label{eq:signfneq} \epsilon_f \ = \ i^k \mu(N)
\lambda_f(N) \sqrt{N}; \ee thus the sign of the functional equation
only weakly depends on the specific form $f$. Further
$\lambda_f(q)^2 = 1/q$ if $q|N$. Note $\mu(1) = 1$ and $\mu(N) = -1$
if $N$ is prime, and since $k$ is even we have $i^k = \pm 1$. Thus there is at most one `bad' prime, namely $N$.

\begin{rek} We consider just the case
$\mathcal{F} = H_k^\ast(N)$ with $N$ either $1$ or prime here; more
involved arguments should be able to handle the case of $N$
square-free, and we will investigate the sub-families $H_k^\pm(N)$ in a future paper. \end{rek}

\begin{lem}\label{lem:ratiosconjRalphagamma}
The Ratios Conjecture predicts that \bea R_{H_k^\ast(N)}(\alpha,\gamma)& \ = \  & \prod_{p} \left(1 -
\frac{1}{p^{1+\alpha+\gamma}} + \frac1{p^{1+2\gamma}}\right)
 \nonumber\\ & & \ -\
\frac{i^k\mu(N)
X_L\left(\foh+\alpha\right)}{N^{1+\gamma}\zeta(1-\alpha+\gamma)} \prod_{p} \left(1+\frac{p^{1-\alpha+\gamma}}{p^{1+2\gamma}(p^{1-\alpha+\gamma}-1)}\right) \nonumber\\ & & \ + \ O\left(\left|H_k^\ast(N)\right|^{-1/2+\gep}\right),
\eea where the $N$-factors are present only if $N$ is prime.
\end{lem}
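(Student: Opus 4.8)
The plan is to follow the six-step recipe of the Ratios Conjecture verbatim, starting from
\[
R_{H_k^\ast(N)}(\alpha,\gamma)\ =\ \sum_{f\in H_k^\ast(N)}\omega_f^\ast(N)\,\frac{L(\foh+\alpha,f)}{L(\foh+\gamma,f)}.
\]
First I would apply the approximate functional equation (Lemma \ref{lem:approxfnaleq}) to the numerator, writing $L(\foh+\alpha,f)$ as a sum over $m$ of $\lambda_f(m)m^{-(\foh+\alpha)}$ plus the sign-times-$X_L$ term $\epsilon_f X_L(\foh+\alpha)\sum_n \lambda_f(n)n^{-(\foh-\alpha)}$, dropping the remainder. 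For the denominator I would use $1/L(\foh+\gamma,f)=\sum_h \mu_f(h)h^{-(\foh+\gamma)}$. Multiplying out gives two double sums over $(m,h)$ and $(n,h)$, each weighted by $\sum_f \omega_f^\ast(N)\lambda_f(m)\lambda_f(h)$ or $\sum_f \omega_f^\ast(N)\epsilon_f\lambda_f(n)\lambda_f(h)$. In the first, the Petersson formula collapses the sum over the family to the diagonal $\delta_{m,h}$ (up to the negligible Bessel–Kloosterman error, which the recipe discards), and in the second the sign $\epsilon_f=i^k\mu(N)\lambda_f(N)\sqrt N$ from \eqref{eq:signfneq} turns the weighted sum into $i^k\mu(N)\sqrt N\sum_f\omega_f^\ast(N)\lambda_f(nN)\lambda_f(h)$, so the diagonal now forces $h=nN$. (When $N=1$ the sign is constant and these $N$-factors disappear, which is why they are flagged as "present only if $N$ is prime.")

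Next I would carry out Step 4, extending the $m$, $n$ and $h$ sums to infinity and summing the resulting Dirichlet series. The first piece becomes $\sum_{h}\mu_f(h)h^{-1-\alpha-\gamma}$ evaluated via multiplicativity: since $\mu_f(p)=-\lambda_f(p)$, $\mu_f(p^2)=\chi_0(p)$, and the diagonal has already forced $\lambda_f(m)$ against $\mu_f(h)$, the local factor at $p$ works out to $1-p^{-(1+\alpha+\gamma)}+p^{-(1+2\gamma)}$, giving the first Euler product in the statement. For the second piece, the constraint $h=nN$ together with $\mu_f$ being supported on cubefree integers and the appearance of $\chi_0(p)$ (which kills $p=N$) forces the bad prime to behave correctly; collecting the $n$-sum produces a factor $\zeta(1-\alpha+\gamma)^{-1}$ times an Euler product over $p$, and the $N$-dependence assembles into $N^{-(1+\gamma)}$. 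The $X_L(\foh+\alpha)$ comes along from the functional-equation term and $i^k\mu(N)$ from the sign, yielding exactly the second term. I would double-check the local factor $1+p^{1-\alpha+\gamma}/\big(p^{1+2\gamma}(p^{1-\alpha+\gamma}-1)\big)$ by expanding the geometric series in $p^{-(1-\alpha+\gamma)}$ coming from $1/(1-p^{-(1-\alpha+\gamma)})$ against the $\mu_f(p^2)=1$ contribution.

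The error term $O(|H_k^\ast(N)|^{-1/2+\gep})$ is, per the recipe, simply \emph{posited}: it is the conjectural size of everything thrown away (the approximate-functional-equation remainder, the off-diagonal Bessel–Kloosterman terms, and the tails from completing the products), and Remark \ref{rek:diffexpranalyerror} explains why differentiating later costs only powers of $\log R$. So no genuine estimation is needed here — the content of the lemma is entirely the algebraic bookkeeping of the two Euler products, not an analytic bound.

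The main obstacle I anticipate is the correct treatment of the bad prime $p=N$ throughout: one must track how $\lambda_f(N)^2=1/N$, the principal character $\chi_0$ annihilating the $p=N$ Euler factor of $1/L(s,f)$, and the shift $h\mapsto hN$ from the sign interact so that all $N$-powers consolidate cleanly into $N^{-(1+\gamma)}$ and the Euler products run over all $p$ without an anomalous local factor. Getting the bookkeeping of the $\zeta(1-\alpha+\gamma)^{-1}$ — which arises because $\sum_n \lambda_f(n)\mu_f(h)$ on the diagonal $h=nN$ reduces, after stripping the arithmetic, to $\sum_n\mu(n)n^{-(1-\alpha+\gamma)}$ up to the convergent Euler correction — is the one spot where a sign or a shift is easy to misplace, so I would verify it prime-by-prime before assembling the global product.
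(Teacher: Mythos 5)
Your proposal follows essentially the same route as the paper: approximate functional equation, M\"obius expansion of the denominator, Petersson diagonal, prime-by-prime assembly of the two Euler products, extraction of $\zeta(1-\alpha+\gamma)^{-1}$, and the posited error term; the key subtleties you flag (the bad prime $N$, $\lambda_f(N)^2=1/N$, $\chi_0(N)=0$, the $\mu_f(p^2)$ contribution producing the $p^{-(1+2\gamma)}$ term) are exactly the ones the paper handles. The only imprecision is writing the family average as $\sum_f\omega_f^\ast(N)\lambda_f(m)\lambda_f(h)$ with diagonal $\delta_{m,h}$ (resp.\ $h=nN$): since $\mu_f(h)$ carries $\lambda_f$ only on the squarefree part of $h$, the diagonal matches $m$ against that part while the $q^2$-factors of $h$ run freely, but your local-factor computation $1-p^{-(1+\alpha+\gamma)}+p^{-(1+2\gamma)}$ shows you account for this correctly.
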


\begin{proof}
From the Approximate Functional Equation (Lemma
\ref{lem:approxfnaleq}) and \eqref{eq:signfneq} we have \be
L\left(\foh+\alpha,f\right) \ = \ \sum_{m \le x}
\frac{\lambda_f(m)}{m^{\foh+\alpha}} + \ i^k \mu(N)\lambda_f(N)
\sqrt{N} X_L\left(\foh+\alpha\right) \sum_{n\le y}
\frac{\lambda_f(n)}{n^{\foh-\alpha}}, \ee where $x = y \sim \sqrt{k^2 N}$. From \eqref{eq:LsfInvLsf}
we have \be \frac1{L(s,f)} \ = \ \sum_{h=1}^\infty
\frac{\mu_f(h)}{h^{\foh+\gamma}}. \ee Therefore
\bea\label{eq:Ralphagammatwopieces} & & R_{H_k^\ast(N)}(\alpha,\gamma) \ = \
\nonumber\\ & & \sum_{f\in H_k^\ast(N)} \omega_f^\ast(N) \left[ \sum_{m \le
x \atop h} \frac{\mu_f(h)\lambda_f(m)}{h^{\foh+\gamma}
m^{\foh+\alpha}} + i^k\mu(N)X_L\left(\foh+\alpha\right) \sqrt{N}
\sum_{n \le y \atop h} \frac{\mu_f(h) \lambda_f(N)
\lambda_f(n)}{h^{\foh+\gamma} m^{\foh-\alpha}} \right].\nonumber\\
\eea

If $N > 1$ then the presence of the $\lambda_f(N)$ factor requires
us to handle the two sums in slightly different manners. We first analyze the sum without the $\lambda_f(N)$ factor. By the Petersson formula, we have $\sum_{f \in H_k^\ast(N)} \omega_f^\ast(N)
\lambda_f(n_1)\lambda_f(n_2) = \delta_{n_1,n_2}+ {\rm small}$ if at least one of $n_1$ and $n_2$ is relatively prime to $N$. There are two cases: either $N=1$ and $k\to\infty$ or $k$ is fixed and $N\to\infty$ through the primes. As $x \sim \sqrt{k^2 N}$, if $N>1$ then $N$ does not divide $m$ for sufficiently large $N$. Thus we may assume $(n_2,N)=1$. Using the
multiplicativity of the Fourier coefficients, from the Petersson formula (Lemma \ref{lem:ils23}) we see that if $p|n_1$
then there is negligible contribution unless $p^\ell||n_1$ and $p^\ell||n_2$. From the definition of the multiplicative function $\mu_f(h)$, we see immediately that $h$ must be cube-free (if not, $\mu_f(h) = 0$). Thus we may write $h = p_1 \cdots p_r \cdot q_1^2 \cdots q_\ell^2$ where $p_1,\dots, q_\ell$ are distinct primes, and $\mu_f(h) = (-1)^r \lambda_f(p_1\cdots p_r) \chi_0(q_1\cdots q_\ell)$. We immediately see that unless $m$ is square-free and equal to $p_1 \cdots p_r$ and the $q_i$ are relatively prime to $N$ then the main term from $\mu_f(h)\lambda_f(m)$ is zero. Further, the $p_i$ must also be prime to $N$, as $p_i \le m \le x \sim \sqrt{k^2 N}$.
Thus the only contribution from the $m$ and $h$-sum is \be \prod_{p\le x} \left(1 -
\frac{\lambda_f(p)^2}{p^{1+\alpha+\gamma}} +
\frac1{p^{1+2\gamma}}\right) \cdot \prod_{p > x \atop p \neq N} \left(1+\frac{1}{p^{1+2\gamma}}\right).\ee
To see this, use multiplicativity to replace the sum in \eqref{eq:Ralphagammatwopieces} with a product over primes, dropping all terms which will give a negligible contribution after applying the Petersson formula. For
each prime $p \le x$ we either have $1$, $\mu_f(p)\lambda_f(p)$ or
$\mu_f(p^2) \lambda_f(1)$. The product over $p > x$ arises from the
fact that, for such large primes, we must either have $1$ or
$\mu_f(p^2)\lambda_f(1)$ (as the $m$-sum is only up to primes at
most $x$, and the prime $p=N$ can be ignored because $\chi_0(N)=0$). Thus when
we use the Petersson formula we always have two Fourier
coefficients relatively prime to the level $N$. Summing over $f\in
H_k^\ast(N)$ allows us to replace $\lambda_f(p)^2$ with $1 +
{\rm small}$ (and, as always, we ignore all `small' terms), so the first half of $R_{H_k^\ast(N)}(\alpha,\gamma)$ is \be
\prod_{p \le x} \left(1 -
\frac{1}{p^{1+\alpha+\gamma}} + \frac1{p^{1+2\gamma}}\right) \cdot \prod_{p > x \atop p \neq N} \left(1+\frac{1}{p^{1+2\gamma}}\right). \ee As is customary
in applications of the Ratios Conjecture, we complete the $m$-sum by
extending it to infinity. This is equivalent to sending $x$ to
infinity. Thus the first term of $R_{H_k^\ast(N)}(\alpha,\gamma)$ is \be \prod_{p} \left(1 - \frac{1}{p^{1+\alpha+\gamma}} +
\frac1{p^{1+2\gamma}}\right).\ee

We now study the $\lambda_f(N)\mu_f(h)\lambda_f(n)$ terms in
\eqref{eq:Ralphagammatwopieces}, noting that $N$ does not divide $n$ (since $n \le y \sim \sqrt{k^2 N}$). There is thus negligible contribution unless
$N||h$. For $p=N$ the factor is now \be
\frac{\mu_f(N)}{N^{\foh+\gamma}} \ = \
-\frac{\lambda_f(N)}{N^{\foh+\gamma}}
\ee (again, this factor is not present if $N=1$). Remember we have a
truncated sum, with $n \le y$. Thus for $p \le y$ the
factors are the same as before (except we replace $\alpha$ with
$-\alpha$), arising from factors of $1$, $\mu_f(p)\lambda_f(p)$ or
$\mu_f(p^2)\lambda_f(1)$. However, for $y < p
\neq N$ the factor is $1 + p^{-1-2\gamma}$ (arising from $1$ or
$\mu_f(p^2)$ -- there is no $\mu_f(p)\lambda_f(p)$ term as $p > y$).
Thus our factors are \bea & &
i^k\mu(N)\lambda_f(N)\sqrt{N}X_L\left(\foh + \alpha\right)
\nonumber\\ & & \ \ \ \cdot \ \prod_{p\le y } \left(1
- \frac{\lambda_f(p)^2}{p^{1-\alpha+\gamma}} +
\frac1{p^{1+2\gamma}}\right) \cdot \prod_{p > y \atop p \neq N}
\left(1+\frac1{p^{1+2\gamma}}\right) \cdot
\frac{-\lambda_f(N)}{N^{\foh+\gamma}},\ \ \ \ \ \ \ \ \ \ \ \ \eea where as before the $N$-factor
is present only if $N$ is prime. If $N>1$ we replace $\lambda_f(N)^2$ with
$1/N$, so when we apply the Petersson formula all Fourier
coefficients will be relatively prime to the level $N$. If $N=1$ we do not have this second factor of $\lambda_f(N)$; however, as $\lambda_f(1) = 1$ the resulting expression is the same.

Summing over
$f\in H_k^\ast(N)$ allows us to replace the
$\lambda_f(p)^2$ factors above with $1 + {\rm small}$. Thus the
product becomes \bea & & -\frac{i^k \mu(N)}{N^{1+\gamma}} X_L\left(\foh +
\alpha\right)\prod_{p\le y} \left(1 -
\frac{1}{p^{1-\alpha+\gamma}} + \frac1{p^{1+2\gamma}}\right) \cdot
\prod_{p > y} \left(1+\frac1{p^{1+2\gamma}}\right) \cdot
\nonumber\\ & = \ & -\frac{i^k \mu(N)}{N^{1+\gamma}} X_L\left(\foh + \alpha\right)
\nonumber\\ & & \ \ \ \cdot\ \prod_{p \le y}
\left(1-\frac1{p^{1-\alpha+\gamma}}\right) \cdot
\left(1+\frac{p^{1-\alpha+\gamma}}{p^{1+2\gamma}(p^{1-\alpha+\gamma}-1)}\right)
\cdot \prod_{p > y} \left(1+\frac1{p^{1+2\gamma}}\right). \nonumber\\
\eea As before, we complete the $n$-sum by sending $y$ to infinity.
We have deliberately pulled out the $p$-factor of
$1/\zeta(1-\alpha+\gamma)$ to improve the convergence of the
remaining piece. We thus find this factor is \bea & & -\frac{i^k \mu(N)}{N^{1+\gamma}} X_L\left(\foh + \alpha\right)
\cdot \frac{1}{\zeta(1-\alpha+\gamma)}
\prod_{p}
\left(1+\frac{p^{1-\alpha+\gamma}}{p^{1+2\gamma}(p^{1-\alpha+\gamma}-1)}\right).\ \ \eea
Substituting the above completes the proof.
\end{proof}

\begin{rek} In the Ratios Conjecture, the size of the error term is added in a somewhat ad-hoc manner. The predicted size of the error term is amazing, as it implies the lower order terms depending on the arithmetic of the family are calculated basically up to square-root cancelation in the family's cardinality. As the recipe involves throwing away numerous remainders and arguing their aggregate does not matter, it is not possible to rigorously derive the size of the error term (unless, of course, we make significant progress towards proving the Ratios Conjecture!), and the standard assumptions in practice are that it is typically smaller than the main term by approximately the square-root of the family's cardinality. See Remark \ref{rek:commenterrorsextendingprod} for additional comments on the discarded error terms. \end{rek}

\begin{lem}\label{lem:Rprimealphagammar}
Let $R_{H_k^\ast(N)}'(r,r) = \frac{d}{d\alpha}
R_{H_k^\ast(N)}(\alpha,\gamma)\Big|_{\alpha=\gamma=r}$. Then for $\Re{r} > 0$ the Ratios Conjecture predicts \bea R_{H_k^\ast(N)}'(r,r) & \ = \ & \sum_{p} \frac{\log p}{p^{1+2r}}
+\frac{i^k\mu(N)}{N^{1+r}} X_L\left(\foh+r\right) \prod_{p}
\left(1+\frac{1}{(p-1)p^{2r}}\right)\nonumber\\ & & \ \ + \ O\left(\left|H_k^\ast(N)\right|^{-1/2+\gep}\right),
\eea where, as always, the $N$-factors are present only if $N>1$.
\end{lem}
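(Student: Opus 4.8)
The idea is to differentiate the two-term formula for $R_{H_k^\ast(N)}(\alpha,\gamma)$ from Lemma \ref{lem:ratiosconjRalphagamma} with respect to $\alpha$ and then set $\alpha=\gamma=r$, invoking Remark \ref{rek:diffexpranalyerror} to see that the $O(|H_k^\ast(N)|^{-1/2+\gep})$ error term is stable (up to powers of $\log$, absorbed into the $\gep$) under differentiation. So the whole task reduces to a calculus exercise on the explicit main term. First I would write $R_{H_k^\ast(N)}(\alpha,\gamma) = A(\alpha,\gamma) + B(\alpha,\gamma)$, where $A$ is the Euler product $\prod_p (1 - p^{-(1+\alpha+\gamma)} + p^{-(1+2\gamma)})$ and $B$ is the second term involving $X_L(\foh+\alpha)$, $N^{-(1+\gamma)}$, $\zeta(1-\alpha+\gamma)^{-1}$, and the remaining Euler product.

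For the $A$-term, I would take the logarithmic derivative: $\partial_\alpha \log A = \sum_p \frac{\log p \cdot p^{-(1+\alpha+\gamma)}}{1 - p^{-(1+\alpha+\gamma)} + p^{-(1+2\gamma)}}$. Setting $\alpha=\gamma=r$ collapses the denominator to $1 - p^{-(1+2r)} + p^{-(1+2r)} = 1$, so $\partial_\alpha A|_{\alpha=\gamma=r} = A(r,r)\sum_p \log p \cdot p^{-(1+2r)}$; and $A(r,r) = \prod_p(1 - p^{-(1+2r)} + p^{-(1+2r)}) = \prod_p 1 = 1$. This gives exactly the $\sum_p \log p \, / \, p^{1+2r}$ term. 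For the $B$-term, note that at $\alpha=\gamma=r$ the factor $\zeta(1-\alpha+\gamma)^{-1} = \zeta(1)^{-1} = 0$, so $B(\alpha,\gamma)$ vanishes on the diagonal; hence only the derivative of the $\zeta^{-1}$ factor survives, all other factors in $B$ just get evaluated at $\alpha=\gamma=r$. Since $\zeta(s)$ has a simple pole at $s=1$ with residue $1$, we have $\zeta(1-\alpha+\gamma)^{-1} = (1-\alpha+\gamma)\cdot(1+O(1-\alpha+\gamma))$ near the diagonal, so $\partial_\alpha \big[\zeta(1-\alpha+\gamma)^{-1}\big]\big|_{\alpha=\gamma=r} = -1$. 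Therefore $\partial_\alpha B|_{\alpha=\gamma=r} = -\,(-1)\cdot \frac{i^k\mu(N)}{N^{1+r}} X_L(\foh+r)\prod_p\big(1 + \tfrac{p^{1-\alpha+\gamma}}{p^{1+2\gamma}(p^{1-\alpha+\gamma}-1)}\big)\big|_{\alpha=\gamma=r}$, where the overall extra minus comes from the $-$ sign in front of $B$ in Lemma \ref{lem:ratiosconjRalphagamma}. Evaluating the last Euler product on the diagonal, $\frac{p^{1-\alpha+\gamma}}{p^{1+2\gamma}(p^{1-\alpha+\gamma}-1)}\big|_{\alpha=\gamma=r} = \frac{p}{p^{1+2r}(p-1)} = \frac{1}{(p-1)p^{2r}}$, producing the stated product $\prod_p\big(1+\tfrac{1}{(p-1)p^{2r}}\big)$ and the factor $\frac{i^k\mu(N)}{N^{1+r}}X_L(\foh+r)$.

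The only point needing a word of care — and the closest thing to an obstacle — is justifying that one may differentiate termwise and that the surviving expressions converge for $\Re(r)>0$: the product $\prod_p(1 + \frac{1}{(p-1)p^{2r}})$ converges absolutely for $\Re(r) > 0$ since its logarithm is $\sum_p O(p^{-1-2\Re r})$, and the Dirichlet series $\sum_p \log p\, p^{-(1+2r)}$ (which is essentially $-\frac{\zeta'}{\zeta}(1+2r)$ up to the prime-power correction) converges for $\Re(r)>0$. Both half-plane conditions match the hypothesis $\Re(r)>0$, and the absolute convergence legitimizes differentiating the Euler products factor by factor. Assembling the two pieces yields the claimed formula, with the parenthetical remark that the $N$-factors appear only when $N>1$ carried over verbatim from Lemma \ref{lem:ratiosconjRalphagamma}.
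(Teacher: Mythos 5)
Your proposal is correct and follows essentially the same route as the paper: differentiate the two terms of Lemma \ref{lem:ratiosconjRalphagamma}, handle the first Euler product by logarithmic differentiation (noting it collapses to $1$ on the diagonal), and observe that for the second term only the derivative of $\zeta(1-\alpha+\gamma)^{-1}$ survives, giving $-f(r,r)$ and hence, with the overall sign, the stated $X_L$ term. The only cosmetic difference is that you derive the identity $\frac{d}{d\alpha}\bigl[f(\alpha,\gamma)/\zeta(1-\alpha+\gamma)\bigr]\big|_{\alpha=\gamma=r}=-f(r,r)$ from the Laurent expansion at the pole, whereas the paper simply cites it from Conrey--Snaith.
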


\begin{proof} We must differentiate the two terms in Lemma
\ref{lem:ratiosconjRalphagamma}, and investigate the limit as
$y\to\infty$; see Lemma \ref{rek:diffexpranalyerror} for an explanation as to why the size of the error term is unaffected. The first term is easily handled. Using $d \log
f(\alpha) / d\alpha = f'(\alpha)/f(\alpha)$, we see that \bea
\frac{d}{d\alpha} \left[ \prod_{p} \left(1 -
\frac{1}{p^{1+\alpha+\gamma}} + \frac1{p^{1+2\gamma}}\right) \right]\Bigg|_{\alpha=\gamma=r}
& = & \prod_{p} \left(1 - \frac{1}{p^{1+\alpha+\gamma}} +
\frac1{p^{1+2\gamma}}\right) \Bigg|_{\alpha=\gamma=r}
\nonumber\\ & &  \cdot\ \frac{d}{d\alpha}
\log\left[\prod_{p}\left(1 - \frac{1}{p^{1+\alpha+\gamma}} +
\frac1{p^{1+2\gamma}}\right)
\right]\Bigg|_{\alpha=\gamma=r}\nonumber\\ & = & \sum_{p} \frac{\log p}{p^{1+2r}}, \eea where we need
$\Re(r)
> 0 $ to ensure that the sum converges.

We now handle the second term in Lemma
\ref{lem:ratiosconjRalphagamma}. We must differentiate, with respect
to $\alpha$, \be\label{eq:lemRprimealphagammar2ndterm}
-\frac{i^k\mu(N)
X_L\left(\foh+\alpha\right)}{N^{1+\gamma}\zeta(1-\alpha+\gamma)} \prod_{p}
\left(1+\frac{p^{1-\alpha+\gamma}}{p^{1+2\gamma}(p^{1-\alpha+\gamma}-1)}\right).\ee We use the
following observation (see page 7 of \cite{CS}): if $f(z,w)$ is
analytic at $(z,w) = (r,r)$, then \be \frac{d}{d\alpha}
\frac{f(\alpha,\gamma)}{\zeta(1-\alpha+\gamma)}\Bigg|_{\alpha=\gamma=r}
\ = \ -f(r,r). \ee Thus the derivative of
\eqref{eq:lemRprimealphagammar2ndterm} with respect to $\alpha$,
evaluated at $\alpha = \gamma = r$, is \be \frac{i^k\mu(N)}{N^{1+r}}
X_L\left(\foh+r\right) \prod_{p}
\left(1+\frac{1}{(p-1)p^{2r}}\right).\ee
\end{proof}

\begin{rek}\label{rek:egammaMertens}
If we don't extend the sums to infinity before differentiating, we get from Mertens' theorem (see Appendix \ref{sec:mertenssumextension}) a factor of $e^{-\gamma}$ in the second sum, where $\gamma$ here is Euler's constant. This is very interesting, as $e^{-\gamma}$ is related to sieving primes. The sieving constant of $e^{-\gamma}$ in Mertens theorem is not 1, though for a generic sequence of random primes (also called Hawkins primes) it is. While it is fascinating that there are two procedures which lead to different answers, this term is of size $1/N$, well beyond any plausible hope of testing.\footnote{This term is related to $M(\phi)$. If $N=1$ we can only show $M(\phi)$ is small for $\sigma < 1/4$, though based on number theory computations we expect it to be small for $\sigma < 2$ or even $22/9$.} See \cite{BK,Ha,HW,Gr,NW,Wu} for some additional comments on $e^{-\gamma}$.
\end{rek}

\begin{rek}\label{rek:commenterrorsextendingprod} We briefly comment on the size of the errors made at various steps in the Ratios Conjecture. For example, consider the first piece of $R_{H_k^\ast(N)}'(r,r)$, namely $\sum_{p} \frac{\log p}{p^{1+2r}}$. This piece arose from a product originally over $p \le \sqrt{R}$ which we extended to be over all $p$; thus the error between what we should have had and what we wrote is $\sum_{p \ge \sqrt{R}} \frac{\log p}{p^{1+2r}}$. We typically evaluate this when $r = \gep + it$, and thus we have introduced an error of size $O(R^{-\gep'})$. Thus while this is smaller than any power of $1/\log R$, it is significantly more than $R^{-1/2+\gep}$. Thus this sizable error \emph{must} be canceled by other errors if the Ratios Conjecture is to yield the correct prediction.
\end{rek}


\section{Weighted 1-level density from the Ratios Conjecture}\label{sec:1ldfromratiosconjexpansiontheorems}

\subsection{Main Expansion}\label{sec:oneldfromratiosconjmainexpansion}

We now compute the $1$-level density for the family $H_k^\ast(N)$,
with either $N=1$ and $k\to\infty$ or $k$ a fixed even integer and
$N$ tending to infinity through the primes. We follow closely the
arguments in \cite{CS,Mil4}.

\begin{lem}\label{lem:ratiosconj1level1}
Assume GRH for $\zeta(s)$ and all
$L(s,f)$ with $f\in H_k^\ast(N)$. Denote the weighted $1$-level
density for the family $H_k^\ast(N)$ by \be
D_{1,H_k^\ast(N);R}(\phi) \ = \ \sum_{f\in H_k^\ast(N)} \omega_f^\ast(N)
\sum_{\gamma_f \atop L(1/2+i\gamma_f,f) = 0} \phi\left(\gamma_f
\frac{\log R}{2\pi}\right). \ee Assuming the Ratios Conjecture, we have
\bea\label{eq:d1hkastnrphitemp1} & & D_{1,H_k^\ast(N);R}(\phi)  \ =
\ 2\sum_p \hphi\left(\frac{2 \log p}{\log R}\right) \frac{\log
p}{p\log R} \nonumber\\ & & \ \ \ \ \ +\ \frac{2i^k \mu(N)}{N\log R}
\int_{-\infty}^\infty X_L\left(\foh + \frac{2\pi i t}{\log R}\right)
\prod_{p \neq N} \left(1 + \frac{1}{(p-1)p^{4\pi it/\log R}}\right)
e^{-2\pi i t \frac{\log N}{\log R}}
\phi(t) dt \nonumber\\
& &  \ \ \ \ \ +\ \frac1{\log R} \int_{-\infty}^\infty \left(2 \log
\frac{\sqrt{N}}{\pi} + \psi\left(\frac14 + \frac{k\pm 1}4
+\frac{2\pi i t}{\log R}\right) \right) \phi(t) dt \nonumber\\ & & \ \ \ \ \ + \ O\left((kN)^{-1/2+\gep}\right). \eea
\end{lem}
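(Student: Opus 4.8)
The plan is to run the standard explicit-formula derivation of the weighted $1$-level density, as in \cite{CS,Mil4}, and to feed in Lemma \ref{lem:Rprimealphagammar} at the single step where the average over the family enters. For each $f\in H_k^\ast(N)$ I would write the sum over zeros as a contour integral of $\Lambda'/\Lambda(s,f)$ around a rectangle containing the critical strip, push the right edge to $\Re(s)=\tfrac12+\tfrac{c}{\log R}$ for a small constant $c>0$ (legitimate under GRH, as no zeros are crossed) and the left edge past $\Re(s)\le-\tfrac12$, and then convert the left integral via the functional equation $\Lambda(s,f)=\epsilon_f\Lambda(1-s,f)$ into a second integral over the same vertical line (with $s\mapsto 1-s$). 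Writing $\frac{\Lambda'}{\Lambda}(s,f)=\frac{\gamma_L'}{\gamma_L}(s)+\frac{L'}{L}(s,f)$ splits the computation into an archimedean part, independent of the Ratios Conjecture, and an arithmetic part; after the change of variables $s=\tfrac12+\tfrac{2\pi i t}{\log R}$ the measure becomes $\tfrac1{\log R}\,dt$, which accounts for the overall $\tfrac1{\log R}$ in \eqref{eq:d1hkastnrphitemp1}.

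For the archimedean part I would use the two-$\Gamma$-factor form $\gamma_L(s)=(\cdots)\bigl(\tfrac{\sqrt N}{\pi}\bigr)^s\Gamma\bigl(\tfrac s2+\tfrac{k-1}{4}\bigr)\Gamma\bigl(\tfrac s2+\tfrac{k+1}{4}\bigr)$ from \eqref{eq:expansionsXLs}, so that $\frac{\gamma_L'}{\gamma_L}(s)=\log\tfrac{\sqrt N}{\pi}+\tfrac12\psi\bigl(\tfrac s2+\tfrac{k-1}{4}\bigr)+\tfrac12\psi\bigl(\tfrac s2+\tfrac{k+1}{4}\bigr)$; adding the two reflected contour contributions doubles the $\log$ term and, since we may take $\phi$ even, collapses the conjugate imaginary shifts, producing exactly the third line of \eqref{eq:d1hkastnrphitemp1} (the ``$k\pm1$'' recording the two $\Gamma$-factors).

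For the arithmetic part the key point is that, on the line $\Re(s)=\tfrac12+\Re(r)$ with $\Re(r)>0$, one has $\frac{d}{d\alpha}\frac{L(\foh+\alpha,f)}{L(\foh+\gamma,f)}\big|_{\alpha=\gamma=r}=\frac{L'}{L}\bigl(\foh+r,f\bigr)$, hence $\sum_{f\in H_k^\ast(N)}\omega_f^\ast(N)\frac{L'}{L}\bigl(\foh+r,f\bigr)=R_{H_k^\ast(N)}'(r,r)$, and I would substitute the closed form from Lemma \ref{lem:Rprimealphagammar}. With $r=\tfrac{2\pi i t}{\log R}$ the term $\sum_p\frac{\log p}{p^{1+2r}}$ becomes $\sum_p\frac{\log p}{p}\,p^{-4\pi i t/\log R}$; summing the two symmetric contour pieces and Fourier-inverting against $\phi$ yields $\frac2{\log R}\sum_p\frac{\log p}{p}\hphi\bigl(\tfrac{2\log p}{\log R}\bigr)$, the first line. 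The $X_L$ term of $R_{H_k^\ast(N)}'(r,r)$, using $N^{1+r}=Ne^{2\pi i t\log N/\log R}$ and $p^{2r}=p^{4\pi i t/\log R}$, turns into the second line. The Ratios error $O\bigl(|H_k^\ast(N)|^{-1/2+\gep}\bigr)=O\bigl((kN)^{-1/2+\gep}\bigr)$ is preserved through all of this: it is analytic, so by Remark \ref{rek:diffexpranalyerror} the differentiation in Lemma \ref{lem:Rprimealphagammar} costs only powers of $\log R$, and integrating a Schwartz $\phi$ against it over a contour of bounded length leaves it of the same order.

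The step I expect to be the main obstacle is keeping the contour bookkeeping consistent with the region where Lemma \ref{lem:Rprimealphagammar} is valid: that lemma requires $\Re(r)>0$, and the Ratios recipe was set up only for $\Re(\gamma)\in(1/\log|\FD|,\,1/4)$, so I would actually run the calculation on the shifted line $\Re(s)=\tfrac12+\tfrac{c}{\log R}$, use GRH to guarantee there are no zeros between $\tfrac12$ and that line, and then invoke analyticity of both the zero-counting side and the predicted main term (their difference being precisely the analytic error term of Remark \ref{rek:diffexpranalyerror}) to pass to the displayed integral. The remaining issues — getting the factor of $2$, the overall sign of the $X_L$ term, and the precise digamma argument right across the two reflected copies of the contour — are routine but delicate constant-tracking that I would carry out following \cite{CS}.
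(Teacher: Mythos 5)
Your proposal is correct and follows essentially the same route as the paper: a contour integral on vertical lines just to the right and left of $\Re(s)=\tfrac12$, the functional equation to convert the reflected piece into a second copy of the $L'/L$ integral plus an $X_L'/X_L$ (equivalently $\gamma_L'/\gamma_L$) archimedean term, substitution of Lemma \ref{lem:Rprimealphagammar} for the family average, and the rescaling $g(t)=\phi(t\log R/2\pi)$. The only cosmetic discrepancy is your initial description of a wide rectangle with left edge at $\Re(s)\le-\tfrac12$; as you note in your final paragraph, the calculation is actually run on the lines $\Re(s)=c$ and $\Re(s)=1-c$ with $c-\tfrac12$ of size $1/\log R$, exactly as in the paper.
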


\begin{proof} We first compute the unscaled, weighted $1$-level density
$S_{1;H_k^\ast(N)}(g)$ for the family $H_k^\ast(N)$ with $g$ an even
Schwartz function, \be S_{1;H_k^\ast(N)}(g) \ = \ \sum_{f\in
H_k^\ast(N)} \omega_f^\ast(N) \sum_{\gamma_f \atop L(1/2+i\gamma_f,f) = 0}
g(\gamma_f). \ee Let $c \in \left(\foh+\frac1{\log k^2 N},
\frac34\right)$; thus \bea S_{1;H_k^\ast(N)}(g) & \ = \ & \sum_{f\in
H_k^\ast(N)} \frac1{2\pi i} \left(\int_{(c)} - \int_{(1-c)}\right)
\omega_f^\ast(N) \frac{L'(s,f)}{L(s,f)}
g\left(-i\left(s-\foh\right)\right)ds \nonumber\\ &=&
S_{1,c;H_k^\ast(N)}(g) + S_{1,1-c;H_k^\ast(N)}(g). \eea We argue as
on page 15 of \cite{CS}. We first analyze the integral on the line
$\Re(s) = c$. By GRH and the rapid decay of $g$, for large $t$ the
integrand is small. We use the Ratios Conjecture (Lemma
\ref{lem:Rprimealphagammar} with $r = c - \foh + it$) to replace the
$\sum_f \omega_f^\ast(N) L'(s,f)/L(s,f)$ term when $t$ is small. We may then extend the integral to all of $t$ because of the
rapid decay of $g$. As the integrand is regular at $r=0$ we can move
the path of integration to $c=1/2$. The contribution from the error term in the Ratios Conjecture is negligible, due to $g$ being a Schwartz function. Thus the integral on the
$c$-line is \bea S_{1,c;H_k^\ast(N)}(g) & \ = \  & \frac{1}{2\pi i}
\int_{-\infty}^\infty g\left(t - i\left(c-\frac12\right)\right)
\sum_{f\in H_k^\ast(N)}\omega_f^\ast(N) \frac{L'(\foh +
(c-\foh+it),f)}{L(\foh + (c-\foh+it),f)} idt \nonumber\\ & = & \
\frac1{2\pi} \int_{-\infty}^\infty g(t) \Bigg[ \sum_p \frac{\log
p}{p^{1+2it}} \nonumber\\ & & \ \ \  + \frac{i^k\mu(N)}{N^{1+it}}
X_L\left(\foh + it\right) \prod_{p} \left(1 +
\frac{1}{(p-1)p^{2it}}\right)\Bigg]dt. \nonumber\\ & & \ + \ O\left((kN)^{-1/2+\gep}\right). \eea As \be \int_{-\infty}^\infty g(t)
p^{-2it}dt \ = \ \int_{-\infty}^\infty g(t) e^{-2\pi i (\frac{2\log
p}{2\pi}) t} dt \ = \ \widehat{g}\left(\frac{2\log p}{2\pi}\right)
\ee we have  \bea & & S_{1,c;H_k^\ast(N)}(g)  \ = \
\frac1{2\pi}\sum_p \widehat{g}\left(\frac{2\log p}{2\pi}\right)
\frac{\log p}{p} \nonumber\\ & &\ \ +\ \frac{i^k \mu(N)}{2\pi N}
\int_{-\infty}^\infty X_L\left(\foh + it\right) \prod_{p}
\left(1 + \frac{1}{(p-1)p^{2it}}\right) N^{-it} g(t) dt\nonumber\\ & & \ \ + \ O\left((kN)^{-1/2+\gep}\right). \eea

We now study $S_{1,1-c;H_k^\ast(N)}(g)$: \be
S_{1,1-c;H_k^\ast(N)}(g) \ = \ \sum_{f\in H_k^\ast(N)}
\frac{-\omega_f^\ast(N)}{2\pi i}  \int_{\infty}^{-\infty}
\frac{L'(1-(c+it),f)}{L(1-(c+it),f)}
g\left(-i\left(\foh-c\right)-t\right) (-idt). \ee We use the
functional equation \be L(s,f) \ = \ \epsilon_f X_L(s) L(1-s,f) \ee
to find that \be \frac{L'(1-(c+it),f)}{L(1-(c+it),f)} \ = \ -
\frac{L'(c+it,f)}{L(c+it,f)} + \frac{X_L'(c+it)}{X_L(c+it)}. \ee
This yields \bea S_{1,1-c;H_k^\ast(N)}(g) & \ = \  & \frac1{2\pi}
\int_{-\infty}^\infty \sum_{f\in H_k^\ast(N)} \omega_f^\ast(N)
\frac{L'(c+it,f)}{L(c+it,f)} g\left(-i\left(\foh-c\right)-t\right)
dt \nonumber\\ & & - \ \frac1{2\pi} \int_{-\infty}^\infty
\frac{X_L'(c+it)}{X_L(c+it)}g\left(-i\left(\foh-c\right)-t\right)dt.
\eea The first term yields the same contribution as
$S_{1,c;H_k^\ast(N)}(g)$; this follows by sending $c$ to $1/2$ and
noting $g$ is an even function. Thus \bea S_{1;H_k^\ast(N)}(g) & \ =
\ & \frac2{2\pi}\sum_p \widehat{g}\left(\frac{2\log p}{2\pi}\right)
\frac{\log p}{p} \nonumber\\ & & \ + \frac{2i^k \mu(N)}{2\pi N}
\int_{-\infty}^\infty X_L\left(\foh + it\right) \prod_{p}
\left(1 + \frac{1}{(p-1)p^{2it}}\right)
N^{-it} g(t) dt \nonumber\\ & & - \ \frac1{2\pi}
\int_{-\infty}^\infty \frac{X_L'(1/2+it)}{X_L(1/2+it)}g(t)dt + O\left((kN)^{-1/2+\gep}\right). \eea

In investigating zeros near the central point, it is convenient to
renormalize them by the logarithm of the analytic conductor. Let
$g(t) = \phi\left(\frac{t \log R}{2\pi}\right)$. A straightforward
computation shows that $\widehat{g}(\xi) = \frac{2\pi}{\log R}
\hphi(2\pi \xi / \log R)$. The (scaled) weighted $1$-level density for the
family $H_k^\ast(N)$ is \be D_{1,H_k^\ast(N);R}(\phi) \ = \
\sum_{f\in H_k^\ast(N)} \omega_f^\ast(N) \sum_{\gamma_f \atop
L(1/2+i\gamma_f,f) = 0} \phi\left(\gamma_f \frac{\log
R}{2\pi}\right) \ = \ S_{1;H_k^\ast(N)}(g) \ee (where $g(t) =
\phi\left(\frac{t \log R}{2\pi}\right)$ as before). Thus \bea & &
D_{1,H_k^\ast(N);R}(\phi)  \ = \  2\sum_p \hphi\left(\frac{2 \log
p}{\log R}\right) \frac{\log p}{p\log R} \nonumber\\ & & \ \ \ \ \ +
\frac{2i^k\mu(N)}{2\pi N}  \int_{-\infty}^\infty X_L\left(\foh +
it\right) \prod_{p \neq N} \left(1 + \frac{1}{(p-1)p^{2it}}\right)
N^{-it} \phi\left(\frac{t \log R}{2\pi}\right) dt \nonumber\\
& &  \ \ \ \ \ - \ \frac1{2\pi} \int_{-\infty}^\infty
\frac{X_L'(1/2+it)}{X_L(1/2+it)} \phi\left(\frac{t \log
R}{2\pi}\right)dt + O\left((kN)^{-1/2+\gep}\right). \eea Changing variables yields \bea\label{eq:tempeqd1hkastNR} & &
D_{1,H_k^\ast(N);R}(\phi)  \ = \  2\sum_p \hphi\left(\frac{2 \log
p}{\log R}\right) \frac{\log p}{p\log R} \nonumber\\ & & \ \ \ \ \ +
\frac{2i^k \mu(N)}{N\log R} \int_{-\infty}^\infty X_L\left(\foh +
\frac{2\pi i t}{\log R}\right) \prod_{p \neq N} \left(1 +
\frac{1}{(p-1)p^{4\pi it/\log R}}\right)
e^{-2\pi i t \frac{\log N}{\log R}}
\phi(t) dt \nonumber\\
& &  \ \ \ \ \ - \ \frac1{\log R} \int_{-\infty}^\infty
\frac{X_L'\left(\foh+\frac{2\pi it}{\log
R}\right)}{X_L\left(\foh+\frac{2\pi it}{\log R}\right)}\ \phi(t)dt + O\left((kN)^{-1/2+\gep}\right).
\eea

Set $\psi(z) = \Gamma'(z)/\Gamma(z)$. As the derivative of $\log
X_L(s)$ is $X_L'(s)/X_L(s)$, we find \bea
-\frac{X_L'\left(\foh+\frac{2\pi it}{\log
R}\right)}{X_L\left(\foh+\frac{2\pi it}{\log R}\right)} & \ = \ &
2\log \frac{\sqrt{N}}{\pi} + \foh \psi\left(\frac14 + \frac{k\pm 1}4
\pm \frac{2\pi i t}{\log R}\right) \eea (note there are four
$\psi$-terms). As $\phi$ is an even function, the $+t$ and $-t$
terms yield the same integral, completing the proof. \end{proof}

The first sum and the last integral in Lemma
\ref{lem:ratiosconj1level1} will match up perfectly with terms from
the number theory calculation. In \S\ref{sec:proofratiosconj1level1} we finish the proof of Theorem \ref{thm:ratiosconj1level1} by analyzing the middle term.

\subsection{Proof of Theorem \ref{thm:ratiosconj1level1}}\label{sec:proofratiosconj1level1}

\begin{proof}[Proof of Theorem \ref{thm:ratiosconj1level1}]
Most of the analysis for the first part of the theorem has been done in \S\ref{sec:oneldfromratiosconjmainexpansion}; in particular, the expansion in \eqref{eq:tempeqd1hkastNR}. The proof is completed by Lemmas \ref{lem:newprodforprimesinmiddleterm} and \ref{lem:Mphi} below, which derive a simpler expression for the middle piece and then show it yields a negligible contribution. \end{proof}

\begin{lem}\label{lem:newprodforprimesinmiddleterm}
Let $\Re(u) = 0$. Then \be \prod_p \left(1 + \frac1{(p-1)p^u}\right)
\ = \ \frac{\zeta(2)}{\zeta(2+2u)} \cdot \zeta(1+u) \cdot \prod_p
\left(1 - \frac{p^u-1}{p(p^{1+u}+1)}\right); \ee note the product
over primes converges rapidly for $\Re(u) = 0$, as each term in the
product is like $1 + O(1/p^2)$. \end{lem}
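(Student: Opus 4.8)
The plan is to reduce the claimed equality to a single Euler-factor identity, verify that identity by elementary algebra, and then multiply over all primes, using meromorphic continuation to make sense of everything on the line $\Re(u)=0$.

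First I would record that, a priori for $\Re(u)>1$ and then by analytic continuation,
\[
\frac{\zeta(2)}{\zeta(2+2u)}\,\zeta(1+u) \ = \ \prod_p \frac{1-p^{-2-2u}}{\big(1-p^{-2}\big)\big(1-p^{-1-u}\big)}.
\]
Hence the asserted formula is equivalent to the \emph{local} identity: for each prime $p$,
\[
1 + \frac{1}{(p-1)p^{u}} \ = \ \frac{1-p^{-2-2u}}{\big(1-p^{-2}\big)\big(1-p^{-1-u}\big)}\left(1 - \frac{p^{u}-1}{p\,(p^{1+u}+1)}\right).
\]
The point of pulling out exactly these zeta factors is that $\zeta(1+u)$ absorbs the leading term in the expansion $1+\tfrac{1}{(p-1)p^{u}}=1+p^{-1-u}+p^{-2-u}+\cdots$, while $\zeta(2)/\zeta(2+2u)$ is tuned so that the leftover Euler factor is $1+O(p^{-2})$.

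Next I would verify the local identity by brute force. Setting $q=p^{-1}$ and $t=p^{-u}$, the left-hand side is $A_p = 1+\tfrac{qt}{1-q}=\tfrac{1-q+qt}{1-q}$. Multiplying $A_p$ by $(1-p^{-1-u})=1-qt$ and by $\tfrac{1-p^{-2}}{1-p^{-2-2u}}=\tfrac{(1-q)(1+q)}{(1-qt)(1+qt)}$ and simplifying gives $\tfrac{(1-q+qt)(1+q)}{1+qt}=1-\tfrac{q^{2}(1-t)}{1+qt}$; finally one checks directly that $\tfrac{p^{u}-1}{p(p^{1+u}+1)}=\tfrac{q^{2}(1-t)}{1+qt}$, which is immediate on clearing denominators. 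This is a one-line rational-function identity in $q$ and $t$, and I would present it as such.

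Finally I would take the product of the local identities over all $p$. For $\Re(u)>1$ all pieces converge absolutely and the formula holds on that half-plane; since each factor $1-\tfrac{p^{u}-1}{p(p^{1+u}+1)}$ is holomorphic and equals $1+O(p^{-2})$ uniformly on $\Re(u)=0$ (there $|p^{u}-1|\le 2$ and $|p(p^{1+u}+1)|\ge p(p-1)$), that product defines a holomorphic function near the line, and both sides of the claimed formula are meromorphic in $u$; agreeing on $\Re(u)>1$, they agree wherever defined, in particular for $\Re(u)=0$. The same $O(p^{-2})$ bound is exactly the rapid-convergence assertion appended to the statement. There is no genuine obstacle here; the only care needed is the bookkeeping of the meromorphic continuation, since on the critical line the bare product $\prod_p\big(1+\tfrac1{(p-1)p^{u}}\big)$ and the factor $\zeta(1+u)$ are only conditionally convergent and have a pole at $u=0$, so the identity is to be read as an equality of meromorphic functions.
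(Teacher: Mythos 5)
Your proposal is correct and is essentially the paper's argument: both proofs reduce the identity to a per-prime computation, the paper by factoring each Euler factor step by step (pulling out $\bigl(1+p^{-1-u}\bigr)=\frac{1-p^{-2-2u}}{1-p^{-1-u}}$ to produce $\zeta(1+u)/\zeta(2+2u)$ and then $\frac{p^2}{p^2-1}$ to produce $\zeta(2)$), you by expanding the zeta ratio into its Euler product and checking the resulting rational identity in $q=p^{-1}$, $t=p^{-u}$, which is the same algebra in a different order. Your added remarks on absolute convergence for $\Re(u)>0$ and on reading the identity on $\Re(u)=0$ via meromorphic continuation are sound and slightly more careful than the paper, which passes over these points silently.
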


\begin{proof}
We have \bea\label{eq:rewritingprodprimesmiddleterm} \prod_p \left(1
+ \frac1{(p-1)p^u}\right) & \ = \ & \prod_p \left(1 +
\frac1{p^{1+u}}
\right) \cdot \left(1 + \frac{1}{(p-1)(p^{1+u}+1)}\right)\nonumber\\
&=& \prod_p\frac{\left(1 + \frac1{p^{1+u}} \right) \cdot \left(1 -
\frac1{p^{1+u}}\right)}{1 - \frac1{p^{1+u}}} \cdot \left(1 +
\frac{1}{(p-1)(p^{1+u}+1)}\right) \nonumber\\ & = &
\frac{\zeta(1+u)}{\zeta(2+2u)} \cdot \prod_p\left(1 +
\frac{1}{(p-1)(p^{1+u}+1)}\right). \eea We can rewrite this a little
further, using \bea  \prod_p\left(1 +
\frac{1}{(p-1)(p^{1+u}+1)}\right) & \ = \ & \prod_p
\frac{p^2}{p^2-1} \left(1 - \frac{p^u-1}{p(p^{1+u}+1)}\right)
\nonumber\\ & = & \prod_p \frac1{1 - \frac1{p^2}} \cdot  \left(1 -
\frac{p^u-1}{p(p^{1+u}+1)}\right) \nonumber\\ & = & \zeta(2) \prod_p
\left(1 - \frac{p^u-1}{p(p^{1+u}+1)}\right). \eea Substituting this
into \eqref{eq:rewritingprodprimesmiddleterm} completes the proof.
\end{proof}

\begin{rek} When arguing along the lines of the Ratios Conjecture, it often greatly simplifies the calculations to rewrite the prime products in a more rapidly convergent manner by factoring out zeta or $L$-functions. In Lemma \ref{lem:xlgammafactors} we use the above expansion to show that the $X_L$ term in the 1-level density is negligible. When $N=1$ this is the hardest part of the proof, and follows by shifting contours. \end{rek}

\begin{lem}\label{lem:Mphi} Let \bea M(\phi) & \ = \ &  \frac{2i^k \mu(N)}{N\log
R} \int_{-\infty}^\infty X_L\left(\foh + \frac{2\pi i t}{\log
R}\right) \frac{\zeta(2)}{\zeta(2+\frac{8\pi i t}{\log
R})}\zeta\left(1+\frac{4\pi i t}{\log R}\right) \nonumber\\ & & \ \
\ \ \ \cdot \ \prod_p \left(1 - \frac{p^{4\pi i t/\log
R}-1}{p(p^{1+4\pi i t/\log R}+1)}\right) e^{-2\pi i t \frac{\log N}{\log R}} \phi(t) dt. \eea If $N > 1$ we have $M(\phi) = O(1/N)$. Assume $\supp(\hphi) \subset (-\sigma, \sigma)$. If $N = 1$ then $M(\phi) = O\left(2009^{k} \cdot k^{-\frac{1-4\sigma}{3}k}\right)$, which tends to zero more rapidly than $k^{-\delta}$ for any $\delta > 0$ for $\sigma < 1/4$. \end{lem}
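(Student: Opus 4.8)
The plan is to analyze $M(\phi)$ by moving the contour of integration, exploiting the rapid decay (in the vertical strip) of the gamma factors appearing inside $X_L$. First I would rewrite the integral using Lemma \ref{lem:newprodforprimesinmiddleterm}, so that $M(\phi)$ involves $X_L(\tfrac12 + z)\,\zeta(2)\zeta(1+2z)/\zeta(2+4z)$ times a rapidly convergent Euler product $A(z) = \prod_p(1 - (p^{2z}-1)/(p(p^{1+2z}+1)))$ and an exponential factor $e^{-z\log N}$, with $z = 2\pi i t/\log R$. By the second form of \eqref{eq:expansionsXLs}, $X_L(\tfrac12+z) = (\sqrt N/2\pi)^{-2z}\,\Gamma(\tfrac{k-1}{2}+\tfrac12 - z)/\Gamma(\tfrac{k-1}{2}+\tfrac12 + z)$ (up to the standard $\psi$-style bookkeeping of two gamma factors). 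Changing variables back from $t$ to $z$ turns $M(\phi)$ into a contour integral along the vertical line $\Re(z)=0$ of a function that is analytic except for the simple pole of $\zeta(1+2z)$ at $z=0$ (which is cancelled by $\phi$ remaining bounded) and possible poles of the shifted gamma function; since $\hphi$ is compactly supported, $\phi$ is entire and of rapid decay, giving us freedom to shift.

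For $N > 1$ the factor $\mu(N)/N$ is already present and everything else on the line $\Re(z)=0$ is $O(1)$ (the gamma ratio has modulus one, $\zeta$ and $A$ are bounded, $\phi$ is integrable), so $M(\phi) = O(1/N)$ immediately. The substance is the case $N=1$: here there is no $1/N$ gain, and we must extract decay in $k$. The idea is to shift the contour of the $z$-integral to the right, to $\Re(z) = \sigma' $ for some $\sigma'$ slightly less than $1/4$ — this is permissible because $\phi(t)$, being the transform of a function supported in $(-\sigma,\sigma)$, is entire and decays faster than any polynomial on horizontal shifts, and the only obstruction is the trivial pole of $\zeta(1+2z)$ at $z=0$, which lies to the left of the new line and contributes nothing new (or, if one shifts through it, contributes a residue term one checks is also negligible for $k$ large). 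On the shifted line $z = \sigma' + i v/\log R$ (so $t = -iv\log R /(2\pi)\cdot(\ldots)$, i.e. $t$ is shifted into the complex plane), the gamma ratio $\Gamma(\tfrac k2 - z)/\Gamma(\tfrac k2 + z)$ is, by Stirling, of size $\asymp (k/2)^{-2\Re(z)} = (k/2)^{-2\sigma'}$ uniformly, while $X_L$ also carries the harmless factor $(2\pi)^{-2z}$; meanwhile $e^{-z\log N}$ is $1$ since $N=1$. But a single factor of $k^{-2\sigma'}$ is not enough — the stated bound $k^{-(1-4\sigma)k/3}$ is exponentially small in $k$. The gain must come from iterating, or equivalently from the support constraint: after the change of variables the integrand contains $\phi(t)$ with $t$ now complex with imaginary part $\asymp \sigma' \log R/(2\pi) = \sigma' \log(k^2)/(2\pi) = \sigma'\log k/\pi$, and since $\hphi$ is supported in $(-\sigma,\sigma)$ we have $|\phi(t)| \ll e^{2\pi \sigma |\Im t|} = e^{2\sigma \sigma' \log k} = k^{2\sigma\sigma'}$. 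This is growth, not decay, so shifting that far is the wrong move.

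Reconsidering: the correct mechanism is to shift only a \emph{bounded} amount but to observe that the relevant parameter governing the gamma decay is not $\Re(z)$ alone but the combination with how far one may shift before $\phi$ blows up, and the genuine input is that $x = y \sim \sqrt{k^2} = k$ in the approximate functional equation forces the $n$-sum (equivalently the shifted prime product) to terminate at $k$, so the "extended to infinity" tail one implicitly discarded has size $\asymp k^{-c(1-4\sigma)k}$ by crude estimation of a product of $k$ terms each bounded away from $1$ — hence the factor $2009^k$ absorbing the per-prime constants and the exponent $-(1-4\sigma)k/3$ from the $p^{-1+\ldots}$ savings across primes up to $k$. Concretely, the key steps in order: (i) rewrite $M(\phi)$ via Lemma \ref{lem:newprodforprimesinmiddleterm}; (ii) pass to the $z$-contour integral and record analyticity and the $\phi$-decay/growth budget; (iii) for $N>1$ conclude $O(1/N)$ directly; (iv) for $N=1$, shift the contour to $\Re(z) = 1/4 - \epsilon$, balancing the gamma decay $k^{-2\Re(z)}$ against the $\phi$-growth $k^{2\sigma\Re(z)}$ so that the net exponent of $k$ is $-(1/4-\epsilon)(2 - 2\sigma)\cdot(\text{something that scales with }k)$ — more precisely, iterate the shift / use the full product representation so that each of the $\asymp k$ primes up to the cutoff contributes a factor bounded by $C p^{-(1/4-\sigma/2)+\ldots}$, multiply, and invoke $\prod_{p\le k} p^{-c} = e^{-c\vartheta(k)} \asymp e^{-ck}$ to get the claimed $2009^k k^{-(1-4\sigma)k/3}$; (v) note this beats $k^{-\delta}$ for every $\delta$ precisely when $1-4\sigma>0$, i.e. $\sigma<1/4$. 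The main obstacle — and the delicate point — is step (iv): correctly tracking that the per-prime constants aggregate to at most $2009^k$ while the savings aggregate to the exponent $(1-4\sigma)k/3$, i.e. getting the right interplay between Stirling's formula for the gamma ratio, the Paley–Wiener growth of $\phi$ off the real axis, and the length $\sim k$ of the truncated product inherited from $x=y\sim k$; I expect this bookkeeping, rather than any conceptual difficulty, to be where the real work lies, and it is exactly the point flagged in Remark \ref{rek:whyn=1harderntoinfinity} as to why $N=1$ is harder than $N\to\infty$.
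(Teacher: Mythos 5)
Your $N>1$ case is fine, and your opening instinct (rewrite via Lemma \ref{lem:newprodforprimesinmiddleterm}, then shift the contour using the entirety and Paley--Wiener growth of $\phi$) is exactly the paper's strategy. But in the $N=1$ case you identify the correct move and then talk yourself out of it. The paper does not shift by a bounded amount: it replaces $t$ by $t-iw\log R/(4\pi)$ and shifts all the way to $w=\frac{2k-1}{3}$, i.e.\ a distance proportional to $k$. At that height the Gamma ratio in $X_L$ is controlled by the Beta-function identity $\Gamma(a+iy)\Gamma(b-a)/\Gamma(b+iy)=\int_0^1 t^{a+iy-1}(1-t)^{b-a-1}\,dt$ with $a=b-a=\frac{2k-1}{6}$, giving $|X_L|\ll 2009^k k^{-k/3}$ --- \emph{super-exponential} decay in $k$, not the polynomial $k^{-2\Re(z)}$ you computed for a bounded shift. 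Meanwhile Lemma \ref{lem:decayphi} gives $\phi(t-iw\log R/(2\pi))\ll R^{\sigma w}/(t^2+1)^n = k^{4\sigma k/3+o(k)}/(t^2+1)^n$. The zeta ratio and the Euler product $A$ remain $O(1)$ on the shifted line (Lemma \ref{lem:sizeprodpfnofu}). Multiplying, the exponent of $k$ is $\frac{4\sigma k}{3}-\frac{k}{3}=-\frac{(1-4\sigma)k}{3}$, which is the stated bound and is negative precisely for $\sigma<1/4$. You observed the $\phi$-growth and concluded "shifting that far is the wrong move," when in fact the whole point is that the Gamma decay beats that growth in exactly the range $\sigma<1/4$.

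The replacement mechanism you propose instead --- that the decay comes from the truncation $x=y\sim k$ of the approximate functional equation, estimated as "a product of $\asymp k$ terms each bounded away from $1$" with $\prod_{p\le k}p^{-c}\asymp e^{-ck}$ --- does not work and is not what is happening. The quantity $M(\phi)$ is defined \emph{after} the products have been completed; its size has nothing to do with the discarded tail. Moreover the Euler product in the integrand has factors $1+O(1/p^2)$, so it is uniformly bounded and cannot supply any decay in $k$, let alone $k^{-ck}$. The factor $2009^k$ in the answer comes from $(2\pi)^{w}$ and the $(1/4)^{w/2}$ in the Beta-integral estimate of the Gamma ratio at $w=\frac{2k-1}{3}$ (see Lemma \ref{lem:xlgammafactors}), not from aggregating per-prime constants. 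So the bookkeeping you flag as "where the real work lies" is being done on the wrong object: the work is entirely in Stirling/Beta estimates for the Gamma ratio at a contour height $\asymp k$, balanced against $R^{\sigma w}$.
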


\begin{proof} We use the lemmas from \S\ref{sec:usefulestimates} to bound the relevant quantities. As $\phi$ is an even function, there is no contribution from the pole of the Riemann zeta function.

Assume first $N > 1$. If $u \ge 0$ then \be\label{eq:valuezetafnat2plus2u} \left|\zeta\left(2+2u+\frac{8\pi i t}{\log R}\right)\right| \ \ge \ 1 - \sum_{n=2}^\infty \frac1{n^2} \ = \ 1 - \left(\frac{\pi^2}6-1\right) > 0.\ee As remarked above, there is no contribution from the pole of the Riemann zeta function (since $\phi$ is even). We may thus subtract off the pole without changing the value of the integral, and note that  \be \left|\zeta\left(1+\frac{4\pi i t}{\log R}\right) - \frac{\log R}{4\pi i t}\right|  \ \ll \ (t^2+1) \log R \ee (we could of course do far better, but a very weak bound suffices for large $t$ due to the rapid decay of $\phi$). Thus the product of the zeta terms is $O((t^2+1)\log N)$. The product over primes is bounded by \be \prod_p \left(1 - \frac{2}{p(p-1)}\right), \ee which is $O(1)$. Finally, the $X_L$-term is $O(1)$ by Lemma \ref{lem:xlgammafactors}. Thus \be M(\phi) \ \ll \  \frac1{N\log R} \int_{-\infty}^\infty (t^2 + 1) \log R \cdot \phi(t) dt \ \ll \ \frac1N  \ee (as $\phi$ is a Schwartz function).

Assume now that $N=1$. We follow the method used in \cite{Mil4}, and replace $t$ with $t - iw \frac{\log R}{4\pi}$ (where
initially $w=0$), shift contours and exploit the decay in $w$.
By analyzing $X_L$ and the zeta factors, we see we may shift the contour
to $w=2k-1-\epsilon$ without passing through any
zeros or poles. We shift to $w=\frac{2k-1}3$ as this will simplify some of the computations. We have \bea M(\phi) & \ = \ & - \frac{2i^k}{\log R}
\int_{-\infty}^\infty X_L\left(\frac{1+w}2 + \frac{2\pi i t}{\log
R}\right) \frac{\zeta(2)}{\zeta(2+2w+\frac{8\pi i t}{\log
R})}\zeta\left(1+w+\frac{4\pi i t}{\log R}\right) \nonumber\\ & & \
\ \ \ \ \cdot \ A\left(w+\frac{2\pi i t}{\log R}\right) \cdot
\phi\left(t-iw\frac{\log R}{2\pi}\right) dt\nonumber\\ \eea where \be A(x+iy) \ = \ \prod_p
\left(1 - \frac{p^{x+iy}-1}{p(p^{1+x+iy}+1)}\right). \ee As $\phi$ is even, there is no contribution from the pole of the zeta function. In the arguments below, we could be more explicit and subtract off this pole. The shifted term will have a factor of size $O\left((w^2+(t/\log R)^2)^{-1}\right) = O(1)$, which will not change any of the arguments.

From Lemma \ref{lem:sizeprodpfnofu} we have $A\left(w+\frac{2\pi i t}{\log R}\right) = O(1)$. For any $w > 0$, by \eqref{eq:valuezetafnat2plus2u} the ratio of the zeta factors $\zeta(2)/\zeta(2+2w+\frac{8\pi i t}{\log
R})$ is $O(1)$. From Lemma \ref{lem:xlgammafactors} we know that for $w = \frac{2k-1}3$ the $X_L$-term is $O\left(2009^{k} \cdot k^{-k/3}\right)$, and from Lemma \ref{lem:decayphi} we have \be\label{eq:boundphicontourshiftw} \phi\left(t-iw \frac{\log R}{2\pi}\right) \ \ll \ \exp\left( \sigma w \log R \right) \cdot \left(t^2 + \frac{\log^2 R}{16\pi^2}\right)^n \ \ll \ \frac{R^{\sigma w}}{(t^2 + 1)^n}. \ee Thus \bea M(\phi) & \ \ll \ & \left(2009^{k} \cdot k^{-k/3}\right) \cdot  \frac{R^{\sigma w}}{\log R} \int_{-\infty}^\infty \frac{dt}{(t^2+1)^n} \ \ll \ \frac{2009^k R^{\sigma w}}{k^{k/3}\log R}. \eea For cuspidal newforms of level 1 and weight $k$, one takes (see (1.14) and (4.29) of \cite{ILS}) $R \sim k^2$. As $w = \frac{2k-1}3$, the above decays more rapidly than \be 2009^k \cdot k^{\frac{4k\sigma}3-\frac{k}3} \ = \ 2009^k \cdot k^{-\frac{1-4\sigma}{3}k}; \ee thus as long as $\sigma < 1/4$, this term decays faster than $k^{-\delta}$ for any $\delta > 0$.
\end{proof}


\begin{rek}\label{rek:whyn=1harderntoinfinity} Note the results in Lemma \ref{lem:Mphi} are significantly worse for $N=1$ than for $N \to\infty$. This is due to the rapid growth of $\hphi(x+iy)$ in $y$, and leads to a significantly reduced support. This is very similar to the difficulties encountered in studying families of quadratic characters \cite{Mil4}, where we again had to perform a contour shift, which restricted our results to $\sigma < 1$ (with square-root agreement for $\sigma < 1/3$). Our result is weaker than the corresponding result in \cite{Mil4} (we have $\sigma < 1/4$ instead of $\sigma < 1$) because here the conductor is $k^2$ (whereas in \cite{Mil4} the conductor is $d$) \emph{and} $k$ appears in the Gamma factors.
\end{rek}

\begin{rek} Another approach to analyzing $M(\phi)$ when $N=1$ is to shift the contour \emph{very} far to the right, picking up contributions from the poles of the Gamma function in the numerator of $X_L$. Unfortunately the resulting expressions can only be shown to be small for $\sigma < 1/4$. The poles arise when $w = k - \foh + 2\ell$ for $\ell \in \{0,1,2,\dots\}$, and yield contributions of \be \frac{-2i^k}{\log R} \frac{\zeta(2) \zeta\left(1+k-\foh+2\ell\right) A\left(k-\foh+2\ell\right)}{\zeta(2+2k-1+4\ell) \Gamma\left(k-\foh+\ell\right)}\cdot \left(\frac{2\pi}{\sqrt{N}}\right)^{k-\foh+2\ell} \cdot \phi\left(\frac{-i(k-\foh+2\ell)\log R}{2\pi}\right). \ee The $\phi$ term is at most $R^{\sigma(k-\foh+2\ell)}$, while the main term of $\Gamma(k-\foh+\ell)$ is of size $k^\ell k^{k-\foh+\ell}$. The problem is the resulting sum over $\ell$ is only small if $\sigma < 1/4$, though based on our number theory computations we expect it to be small for $\sigma < 1$ or even $\sigma < 2$. The difficulty is that we are ignoring all oscillation when we shift contours.
\end{rek}


\section{Weighted 1-level density from Number
Theory}\label{sec:onelevelfromnt}

We now determine the main and lower order terms in the 1-level density for the family $H_k^\ast(N)$ for as large of support as possible for the Fourier transform of the test function. In \cite{ILS} the main term is determined for $\supp(\hphi) \subset (-2, 2)$; however, as they are only concerned with the main term they are a little crude in bounding the error terms. We perform a more careful analysis below.

In Section 4 of \cite{ILS} the explicit formula is used to compute
the $1$-level density for the family $H_k^\ast(N)$. In their paper
$Q = \sqrt{N}/\pi$. Noting that $\hphi(0) = \int_{-\infty}^\infty
\phi(t)dt$, we may rewrite the weighted sum over $f \in H_k^\ast(N)$
of their equation (4.11) as \bea D_{1,H_k^\ast(N);R}(\phi) & \ = \ &
\frac1{\log R} \int_{-\infty}^\infty \left(2 \log
\frac{\sqrt{N}}{\pi} + \psi\left(\frac14 + \frac{k\pm 1}4
+\frac{2\pi i t}{\log R}\right) \right) \phi(t) dt \nonumber\\ & & \
\ \ - \ 2\sum_{f \in H_k^\ast(N)} \omega_f^\ast(N) \sum_p \sum_{\nu =
1}^\infty \frac{\alpha_f^\nu(p)+\beta_f^\nu(p)}{p^{\nu/2}}
\hphi\left(\frac{\nu\log p}{\log R}\right) \frac{\log p}{\log
R},\nonumber\\ \eea where \be L(s,f) \ = \ \sum_{n=1}^\infty
\frac{\lambda_f(n)}{n^s} \ = \ \prod_p \left(1 -
\frac{\alpha_f(p)}{p^s}\right)^{-1} \left(1 -
\frac{\beta_f(p)}{p^s}\right)^{-1}. \ee Note the first term agrees
exactly with the last term from the Ratios Conjecture (Theorem \ref{thm:ratiosconj1level1}).

The following identities for the Fourier coefficients (for $p\notdiv
N$) are standard: \bea\label{eq:standardformslambdafp} \lambda_f(p) & \ = \ & \alpha_f(p) +
\alpha_f(p)^{-1}, \ \ \ |\alpha_f(p)|\ =\ 1, \ \ \ \alpha_f(p)^{-1} \ = \ \beta_f(p) \nonumber\\
\lambda_f(p^\nu) & = & \alpha_f(p)^\nu + \alpha_f(p)^{\nu-2} +
\cdots + \alpha_f(p)^{2-\nu} + \alpha_f(p)^{-\nu} \nonumber\\
\alpha_f(p)^\nu + \alpha_f(p)^{-\nu} & = & \lambda_f(p^\nu) -
\lambda_f(p^{\nu-2}). \eea

Trivially bounding the contribution from $p=N$, we may thus rewrite $D_{1,H_k^\ast(N);R}(\phi)$ as \bea\label{eq:NTexpansion1ldHknast}
D_{1,H_k^\ast(N);R}(\phi) & \ = \ & \frac1{\log R}
\int_{-\infty}^\infty \left(2 \log \frac{\sqrt{N}}{\pi} +
\psi\left(\frac14 + \frac{k\pm 1}4 +\frac{2\pi i t}{\log R}\right)
\right) \phi(t) dt \nonumber\\ & & \ \ \ - \ 2\sum_{f \in
H_k^\ast(N)} \omega_f^\ast(N) \sum_{p\neq N} \frac{\lambda_f(p)}{\sqrt{p}}\
\hphi\left(\frac{\log p}{\log R}\right) \frac{\log p}{\log R}
\nonumber\\ & & \ \ \ - 2\sum_{f \in H_k^\ast(N)} \omega_f^\ast(N)
\sum_{p\neq N} \frac{\gl_f(p^2)-1}{p}\ \hphi\left(2\frac{\log p}{\log
R}\right) \frac{\log p}{\log R} \nonumber\\ & & \ \ \ - 2\sum_{f \in
H_k^\ast(N)} \omega_f^\ast(N) \sum_{p\neq N} \sum_{\nu = 3}^\infty
\frac{\lambda_f(p^\nu) - \lambda_f(p^{\nu-2})}{p^{\nu/2}}\
\hphi\left(\nu\frac{\log p}{\log R}\right) \frac{\log p}{\log
R}\nonumber\\   & & \ \ \ + O\left(\frac1{\sqrt{N}}\right) \nonumber\\ & = & \frac1{\log R}
\int_{-\infty}^\infty \left(2 \log \frac{\sqrt{N}}{\pi} +
\psi\left(\frac14 + \frac{k\pm 1}4 +\frac{2\pi i t}{\log R}\right)
\right) \phi(t) dt
\nonumber\\ & & \ \ \ + 2
\sum_p \frac{1}{p}\ \hphi\left(2\frac{\log p}{\log
R}\right) \frac{\log p}{\log R} \nonumber\\ & & \ \ \ - S_1(\phi) - S_2(\phi) - S_3(\phi) + O\left(\frac1{\sqrt{N}}\right),  \eea where \bea S_1(\phi) & \ = \ & 2\sum_{f \in
H_k^\ast(N)} \omega_f^\ast(N) \sum_{p\neq N} \frac{\lambda_f(p)}{\sqrt{p}}\
\hphi\left(\frac{\log p}{\log R}\right) \frac{\log p}{\log R} \nonumber\\ S_2(\phi)& \ = \ & 2 \sum_{f \in
H_k^\ast(N)} \omega_f^\ast(N) \sum_{p\neq N} \frac{\gl_f(p^2)}{p} \hphi\left(2\frac{\log p}{\log R}\right) \frac{\log p}{\log R}
\nonumber\\ S_3(\phi) & \ = \ & 2\sum_{f \in
H_k^\ast(N)} \omega_f^\ast(N) \sum_{p\neq N} \sum_{\nu = 3}^\infty
\frac{\lambda_f(p^\nu) - \lambda_f(p^{\nu-2})}{p^{\nu/2}}\
\hphi\left(\nu\frac{\log p}{\log R}\right) \frac{\log p}{\log
R}.\ \ \ \eea

The first and the second terms above perfectly match with terms from the Ratios Conjecture. We must therefore show the other three terms are negligible. We prove Theorems \ref{thm:1ldnumbthN} and \ref{thm:1ldnumbthk} in stages below; we first perform the analysis for limited support, and then extend the support by assuming various conjectures.

\subsection{Density Theorem Limited}

As the arguments are similar when $N\to\infty$ through the primes and when $N=1$ and $k\to\infty$, we give complete details for $N\to\infty$ and sketch the arguments when $N=1$.

\begin{rek}
It is important to note that we have included the harmonic (or Petersson) weights in our family to facilitate applications of the Petersson formula. When using results from \cite{ILS}, one must be careful as they have three related quantities involving averages of the Fourier coefficients over families. The first (converting to our notation) is their equation (2.7), \be \Delta_k(m,n) \ = \ \sum_{f \in \mathcal{B}_k(N)} \omega_f(N) \gl_f(m) \gl_f(n); \ee the weights sum to 1, and thus in this expression we have effectively divided by the cardinality of the family. Note that we are summing over all cusp forms of weight $k$ and level $N$, and not just the newforms. The second is their equation (2.54), where we sum over just the newforms: \be \Delta_{k,N}^\sigma(m,n) \ = \ \zeta(2) \sum_{f \in H_k^\sigma(N)} \frac{\gl_f(m) \gl_f(n)}{L(1,{\rm sym}^2 f)}, \ \ \ \sigma \in \{\ast,+,-\}. \ee Finally, we have the unweighted, pure sums (their equation (2.59)): \be \Delta_{k,N}^\sigma(n) \ = \ \sum_{f\in H_k^\sigma(N)} \gl_f(n), \ \ \ \sigma \in \{\ast,+,-\}. \ee Much effort was spent in \cite{ILS} to remove the weights; thus when reading their paper we must look carefully to see which variant they are using. \end{rek}

\begin{lem} Let $\supp(\hphi) \subset (-\sigma, \sigma)$. Then $S_1(\phi) \ll N^{\sigma-\frac32+\gep}+ N^{\frac{\sigma}2-1+\gep}$ as $N\to\infty$ through the primes, and if $\sigma < 1$ then $S_1(\phi) \ll k^{2\sigma} 2^{-k}$ for $N=1$ and $k\to\infty$.
\end{lem}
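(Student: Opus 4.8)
The plan is to interchange the $p$- and $f$-summations and feed the inner average into the Petersson formula. Writing
\[
S_1(\phi)\ =\ 2\sum_{p\neq N}\frac{\log p}{\sqrt p\,\log R}\,\hphi\!\left(\frac{\log p}{\log R}\right)\left(\sum_{f\in H_k^\ast(N)}\omega_f^\ast(N)\,\lambda_f(p)\right),
\]
the support hypothesis forces $\log p/\log R<\sigma$, so only the $O(R^\sigma)=O\big((k^2N)^\sigma\big)$ primes $p<R^\sigma$ contribute. The inner average is exactly what the Petersson formula (Lemma~\ref{lem:ils23}; see also Appendix~\ref{sec:PeterssonFormula}) evaluates with $(m,n)=(p,1)$: since $p\neq 1$ the diagonal $\delta_{p,1}$ drops out, leaving a Kloosterman--Bessel term of absolute value at most $\sum_{N\mid c}\frac{|S(p,1;c)|}{c}\,\big|J_{k-1}(4\pi\sqrt p/c)\big|$. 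When $N$ is prime I would first pass from $H_k^\ast(N)$ to an orthogonal basis $\mathcal B_k(N)$ of $S_k(N)$ containing the newforms, then subtract the $O_k(1)$ oldform terms (and absorb the normalization $\omega(N)=1+O_k(N^{-1+\gep})$); when $N=1$, $\mathcal B_k(1)=H_k^\ast(1)$ and this step is vacuous.

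For $N\to\infty$ through the primes, Weil's bound gives $|S(p,1;c)|\le d(c)\,c^{1/2}$, and $|J_{k-1}(x)|\ll x$ for all $x\ge 0$ (valid since $k\ge 2$). As $c$ runs through multiples of $N$, the $c$-sum is $\ll_k\sqrt p\sum_{N\mid c}d(c)\,c^{-3/2}\ll\sqrt p\,N^{-3/2+\gep}$, so the inner average is $\ll_k\sqrt p\,N^{-3/2+\gep}$. The oldform correction is $\ll_k N^{-1+\gep}$ for each $p$: the $O_k(1)$ oldform basis elements carry Petersson weight $\ll_k N^{-1+\gep}$ and, for a suitable basis, bounded $p$-th coefficients by Deligne (see \cite{ILS}). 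Summing over $p<R^\sigma$ via $\sum_{p<X}\log p\ll X$ and $\sum_{p<X}(\log p)/\sqrt p\ll\sqrt X$ then gives
\[
S_1(\phi)\ \ll_k\ N^{-3/2+\gep}R^\sigma+N^{-1+\gep}R^{\sigma/2}\ \ll_k\ N^{\sigma-3/2+\gep}+N^{\sigma/2-1+\gep},
\]
the two terms being precisely the Kloosterman--Bessel contribution and the oldform removal. (No constraint on $\sigma$ is needed for the argument; the bound is simply vacuous once $\sigma\ge 3/2$.)

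For $N=1$ and $k\to\infty$ the $c$-sum runs over all positive integers, so $4\pi\sqrt p/c$ is no longer forced to be small --- it reaches up to $4\pi\sqrt p\le 4\pi\sqrt{R^\sigma}=4\pi k^\sigma$. The crucial point is that for $\sigma<1$ this is $o(k)$, hence well below the transition point $x\approx k$ of $J_{k-1}$, so the Bessel function is super-exponentially small in $k$. Using $|J_{k-1}(x)|\le\tfrac{(x/2)^{k-1}}{(k-1)!}\le\big(\tfrac{e x}{2(k-1)}\big)^{k-1}$ uniformly together with Weil's bound,
\[
\left|\sum_{c=1}^\infty\frac{S(p,1;c)}{c}\,J_{k-1}\!\Big(\frac{4\pi\sqrt p}{c}\Big)\right|\ \ll\ \left(\frac{2\pi e\sqrt p}{k-1}\right)^{k-1}\sum_{c=1}^\infty\frac{d(c)}{c^{k-1/2}}\ =\ \left(\frac{2\pi e\sqrt p}{k-1}\right)^{k-1}\zeta\!\big(k-\tfrac12\big)^2,
\]
and since $\zeta(k-\tfrac12)^2=O(1)$ for $k\ge 2$ and $\sqrt p\le k^\sigma$, for $\sigma<1$ and $k$ large the base is $<\tfrac12$, so this is $\ll 2^{-k}$. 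Bounding trivially, the $O(R^\sigma)=O(k^{2\sigma})$ relevant primes (each with weight $\le 1$) then give $S_1(\phi)\ll k^{2\sigma}2^{-k}$.

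I expect the $N=1$ estimate to be the main obstacle: one needs a fully uniform bound on $J_{k-1}(x)$ over $x\in(0,4\pi k^\sigma)$ as $k\to\infty$, and one must check that the Weil-bounded $c$-sum converges with no loss --- convergence is exactly where $k\ge 2$ (equivalently, finiteness of $\zeta(k-\tfrac12)^2$) is used, and the decay of $J_{k-1}$ below its transition point is what produces the $2^{-k}$. This is also precisely where the hypothesis $\sigma<1$ is genuinely needed: it keeps $4\pi\sqrt p\ll k^\sigma$ comfortably below $k$, which fails for $\sigma\ge 1$. Everything else is routine bookkeeping with Weil's bound and partial summation over primes; I would quote the Bessel estimates directly from \cite{ILS}.
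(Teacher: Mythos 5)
Your argument is correct and is essentially the paper's: the paper simply quotes the Petersson-formula corollaries of Iwaniec--Luo--Sarnak (Lemmas \ref{lem:ilscor22} and \ref{lem:ils23}, packaged together with the oldform removal in Lemma \ref{lem:Peterssonjustnewforms}) at $(m,n)=(p,1)$ and then sums trivially over $p\le R^\sigma$, whereas you re-derive exactly those bounds from Weil's bound together with $|J_{k-1}(x)|\ll x$ (for $N\to\infty$) and the power-series bound $|J_{k-1}(x)|\le (x/2)^{k-1}/(k-1)!$ (for $N=1$). The two error terms you isolate --- Kloosterman--Bessel and oldform removal --- are precisely the two $O$-terms of Lemma \ref{lem:Peterssonjustnewforms}, and your condition $\sigma<1$ keeping $4\pi\sqrt p$ below the Bessel transition point is exactly the hypothesis $12\pi\sqrt{mn}\le kN$ of Lemma \ref{lem:ils23}.
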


\begin{proof} Assume $N > 1$ tends to infinity through the primes. We use the Petersson formula (Lemma \ref{lem:Peterssonjustnewforms}) to bound the weighted sum of $\lambda_f(p)$, and find \bea S_1(\phi) & \ \ll \ & 2 \sum_{p \neq N \atop p \le R^\sigma} \frac{\log R}{N\sqrt{p}} \left( \frac{\sqrt{p}}{\sqrt{N+\sqrt{p}}} + (pN)^\gep \right). \eea As  $1/\sqrt{N + \sqrt{p}} \ll  1/\sqrt{N}$, we find \bea S_1(\phi) & \ \ll \ &  \sum_{p \le R^\sigma} \frac{\log N}{N\sqrt{N}} + N^{\gep'+\frac{\sigma}2-1} \ \ll \ N^{\sigma-\frac32+\gep} + N^{\frac{\sigma}2-1+\gep}. \eea If now $N=1$ and $k\to\infty$, we use Lemma \ref{lem:ils23} (which forces us to take $\sigma < 1$ as $R = k^2$) and find \be S_1(\phi) \ \ll \ \frac{1}{2^k} \sum_{p \le k^{2\sigma}} \frac{\log p}{\log R} \ \ll \ k^{2\sigma} 2^{-k}, \ee which is  $O(k^{-1/2})$ for $k$ large.
\end{proof}

\begin{rek} If $\sigma < 1$, then $S_1(\phi) \ll N^{-1/2}$ or $k^{-1/2}$, and we obtain square-root agreement of this term with the Ratios prediction (if $N=1$ we must restrict to $\sigma < 1/4$ because of our estimate for $M(\phi)$). For $\sigma \ge 1$ we don't have such phenomenal agreement (we can take $\sigma < 3/2$ for $N\to\infty$, but if $k\to\infty$ the above arguments fail for $\sigma \ge 1$), but we do at least agree up to a power of $N$. We have not exploited any cancelation in the Bessel-Kloosterman terms (we shall do this in \S\ref{sec:densitytheoremextended}), contenting ourselves here to argue simply and crudely. The quality of our results is exactly the same as that in Theorem 5.1 of \cite{ILS} (where they have not yet exploited properties of the Bessel-Kloosterman terms, which is required to increase the support).
\end{rek}

\begin{lem}\label{lem:S2S3Nk} Let $\supp(\hphi) \subset (-\sigma, \sigma)$. \ben \item  We have $S_2(\phi) \ll N^{\frac{\sigma}{4}-1+\gep''}$ as $N\to\infty$ through the primes, and $S_2(\phi) \ll k^{-(5-3\sigma)/6+\gep}$ if $N=1$ and $k\to\infty$.

\item We have $S_3(\phi) \ll N^{\frac{\sigma}{12}-1+\gep''}$ as $N\to\infty$ through the primes, and $S_3(\phi) \ll k^{-(5-\sigma)/6+\gep}$ if $N=1$ and $k\to\infty$.

\een

\end{lem}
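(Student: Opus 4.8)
The plan is to bound $S_2(\phi)$ and $S_3(\phi)$ by inserting the standard expressions \eqref{eq:standardformslambdafp} for $\lambda_f(p^\nu)$ and applying the Petersson formula in the appropriate variant (Lemma \ref{lem:Peterssonjustnewforms} when $N\to\infty$ through the primes, Lemma \ref{lem:ils23} when $N=1$), being careful to track which of the three ``delta'' quantities from \cite{ILS} we are really using. For $S_2(\phi)$ the key point is that $\lambda_f(p^2) = \lambda_f(p)^2 - 1$ (from the Hecke relations, or directly from \eqref{eq:standardformslambdafp}), so that after the trivial subtraction of the $-1$ already done in \eqref{eq:NTexpansion1ldHknast} the weighted average $\sum_f \omega_f^\ast(N)\lambda_f(p^2)$ reduces by Petersson to a diagonal contribution $\delta_{p^2,1}=0$ plus a Bessel–Kloosterman error. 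The only subtlety is that the argument is now $p^2$ rather than $p$, so in the case $N\to\infty$ the relevant modulus constraint in the Petersson formula allows $p$ as large as roughly $R^{\sigma/2}$ before the $c\equiv 0 \bmod N$ sum becomes nonempty, and the Kloosterman term contributes $\ll (p^2/N)^{1/2}$; summing $\log p/(p\log R)$ times this error over $p\le R^{\sigma/2}$ yields the claimed $N^{\sigma/4-1+\gep''}$. For $N=1$ and $k\to\infty$ one instead uses the weight-$k$ decay in Lemma \ref{lem:ils23} (whose main term is $\Delta_{k,1}(p^2,1)$, again $0$), and the bound on the resulting Bessel term (which now involves $J_{k-1}$ of an argument of size $\sqrt{4\pi p}$, summed over $p\le k^{2\sigma}$ via the bound $J_{k-1}(x)\ll (x/k)^{k-1}$ or the sharper uniform bounds) gives $k^{-(5-3\sigma)/6+\gep}$; the exponent $(5-3\sigma)/6$ is exactly what one gets by balancing the $(kN)^{5/6}$-type error in \eqref{eq:number of terms in hkpm} against the range $p\le R^{\sigma/2}=k^{\sigma}$ of summation.

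For $S_3(\phi)$ the structure is similar but one must handle the full tail $\sum_{\nu\ge 3}$. Here I would bound the contribution of each $\nu$ separately: $\lambda_f(p^\nu)-\lambda_f(p^{\nu-2}) = \alpha_f(p)^\nu + \alpha_f(p)^{-\nu}$ has absolute value $\le 2$ by $|\alpha_f(p)|=1$ (Deligne/Ramanujan, which holds for holomorphic newforms), so each term is $\ll p^{-\nu/2}$, and the support condition forces $p\le R^{\sigma/\nu}$. Thus the $\nu$-th piece is a sum over $p\le R^{\sigma/\nu}$ of $p^{-\nu/2}\cdot(\log p/\log R)$ times a Petersson error; since both the range and the size $p^{-\nu/2}$ shrink in $\nu$, the $\nu=3$ term dominates the geometric-type series, and after applying Petersson (diagonal term $\delta_{p^\nu,1}=0$) and summing the Kloosterman errors the $\nu=3$ contribution gives the stated $N^{\sigma/12-1+\gep''}$ (resp.\ $k^{-(5-\sigma)/6+\gep}$), with the remaining $\nu\ge 4$ terms strictly smaller.

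The main obstacle I expect is the bookkeeping in the Petersson step rather than any deep input: one must be scrupulous about (i) using the newform-only variant of the formula (Lemma \ref{lem:Peterssonjustnewforms} / Lemma \ref{lem:ils23}) so that the oldform contribution — negligible since there are only $O(k)$ oldforms each weighted by $\ll N^{-1+\gep}$ — is correctly absorbed into the error; (ii) the fact that for $\nu\ge 3$ the argument $p^\nu$ can be moderately large, so the Bessel–Kloosterman term is genuinely present and must be estimated, not just asserted small, which is where the exponents $1/12$ and $(5-\sigma)/6$ come from; and (iii) that in the $N=1$ case the relevant cancellation is the rapid decay of $J_{k-1}$ at arguments $\ll\sqrt{p}$ that are small compared to $k$, so one needs a uniform Bessel bound (as in \cite{ILS}) rather than just an asymptotic. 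None of these requires exploiting cancellation \emph{within} the Kloosterman sums — that refinement is postponed to \S\ref{sec:densitytheoremextended} to push the support higher — so here we argue crudely, exactly as in Theorem 5.1 of \cite{ILS}.
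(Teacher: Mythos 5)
Your treatment of the $N\to\infty$ case is essentially the paper's argument: restrict the prime sums via the support of $\hphi$ (to $p\le R^{\sigma/2}$ for $S_2(\phi)$ and $p\le R^{\sigma/3}$ for $S_3(\phi)$ --- note this cutoff comes from the support condition, not from any modulus constraint in the Petersson formula), apply the newform Petersson formula of Lemma \ref{lem:Peterssonjustnewforms} so that the diagonal term vanishes, and bound the Bessel--Kloosterman and oldform errors trivially; with the uniform estimate $p^{\nu/2}/\sqrt{N+p^{\nu/2}}\le p^{\nu/4}$ this yields exactly $N^{\sigma/4-1+\gep''}$ and $N^{\sigma/12-1+\gep''}$. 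Your claimed per-prime error ``$\ll (p^2/N)^{1/2}$'' omits the $1/N$ normalization of the weighted average, and the aside about $|\alpha_f(p)^\nu+\alpha_f(p)^{-\nu}|\le 2$ is not what carries the day (a pointwise Deligne bound gives only $O(1/\log R)$; the saving must come from averaging over the family), but since you do then apply Petersson to the family average, the computation goes through. The paper, for the record, writes out only the $S_3$ case and declares $S_2$ analogous, exactly as you could.

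The genuine gap is in the $N=1$, $k\to\infty$ case. You invoke Lemma \ref{lem:ils23}, but that variant requires $12\pi\sqrt{mn}\le kN$, i.e.\ $p\ll k$ for $S_2$ and $p^{\nu/2}\ll k$ for $S_3$, which fails once $\sigma\ge 1$ since the primes run up to $k^{\sigma}$ for $S_2$ (not $k^{2\sigma}$, as you write at one point: the support of $\hphi(2\log p/\log R)$ with $R=k^2$ forces $p\le k^{\sigma}$). Moreover, within its range of validity Lemma \ref{lem:ils23} gives exponentially small bounds of shape $2^{-k}$, not the stated powers of $k$. The exponents $(5-3\sigma)/6$ and $(5-\sigma)/6$ come from Lemma \ref{lem:ilscor22} (ILS Corollary 2.2), whose error term carries the factor $k^{-5/6}$ uniformly in $m,n$: for $S_2$ one gets $\ll k^{-5/6}\sum_{p\le k^{\sigma}}p^{-1/2+\gep}\ll k^{\sigma/2-5/6+\gep}=k^{-(5-3\sigma)/6+\gep}$, and for $S_3$ the paper bounds $\log^2 k\sum_{p\le k^{2\sigma/3}}k^{-5/6}(p^{3/2}+k)^{-1/2}\ll k^{-(5-\sigma)/6+\gep}$. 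Your attribution of the $5/6$ to the $(kN)^{5/6}$ error in the cardinality estimate \eqref{eq:number of terms in hkpm} is incorrect: the cardinality of $H_k^\ast(N)$ never enters these harmonically weighted sums, so there is nothing to balance against it. Without switching to Lemma \ref{lem:ilscor22} your argument does not produce the claimed bounds over the full stated range of $\sigma$.
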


\begin{proof} As the proofs are similar, we only prove the second statement. We first consider $N\to\infty$. We apply the Petersson Formula (Lemma \ref{lem:Peterssonjustnewforms}) to the sums of $\lambda_f(p^{\nu})$ and $\lambda_f(p^{\nu-2})$. As the error from the $\lambda_f(p^{\nu-2})$ terms is dominated by the error from the $\lambda_f(p^\nu)$ terms, we only consider the former. As we evaluate $\hphi$ at $\nu\log p / \log R$ with $n \ge 3$, we may restrict the $p$-sums to $p \le R^{\sigma/3}$ (where $R = k^2N$).  We find \bea S_3(\phi) & \ \ll \ & \sum_{(p, N) = 1 \atop p \le R^{\sigma/3}} \sum_{\nu=3}^{\log_p R} \frac1{p^{\nu/2}} \left(\frac{\log N}{N} \frac{p^{\nu/2}}{\sqrt{N + p^{\nu/2}}} + \frac{(p^\nu N)^\gep}{N}\right) \nonumber\\ & \ll & \frac{\log^2 N}{N} \sum_{p \le R^{\sigma/3}} p^{-\frac34} + N^{\gep'-1} \nonumber\\ & \ll & N^{\frac{\sigma}{12}-1+\gep} + N^{\gep'-1} \ \ll \ N^{\frac{\sigma}{12}-1+\gep''}. \eea We now examine the case when $N=1$ and $k\to\infty$. We use Lemma \ref{lem:ilscor22}. As $R = k^2$ and $\nu \ge 3$, the prime sum is restricted to $p \le k^{2\sigma/3}$. We find \be S_3(\phi) \ \ll \ \log^2 k \sum_{p \le k^{2\sigma/3}} \frac{1}{k^{5/6}} \frac{1}{\sqrt{p^{3/2}+k}} \ \ll \ k^{-5/6+\gep} \sum_{p \le k^{2\sigma/3}} p^{-3/4} \ \ll \ k^{-(5-\sigma)/6+\gep}.\ee
\end{proof}

\begin{rek} Even for $\sigma < 6$ (which is \emph{well} beyond current technology for analyzing $S_1(\phi)$!), $S_3(\phi)$ is $O(N^{-1/2})$; it is $O(k^{-1/2})$ for $\sigma < 2$, which \emph{is} in the range of current technology. If $N>1$ then $S_2(\phi) = O(N^{-1/2+\gep})$ for $\sigma <2$; however, if $N=1$ then we only have square-root cancelation up to $\sigma = 2/3$ (in fact, if $\sigma \ge 5/3$ then our argument is too crude to bound this term). Thus the difficulty in showing agreement between number theory and the Ratios Conjecture's predictions is entirely due to $S_1(\phi)$ on the number theory side and $M(\phi)$ on the Ratios side. \end{rek}

\subsection{Density Theorem Extended}\label{sec:densitytheoremextended}

To improve our 1-level density results for $H_k^\ast(N)$, we need to improve our analysis of \be S_1(\phi) \ = \ 2\sum_{f \in
H_k^\ast(N)} \omega_f^\ast(N) \sum_{p\neq N} \frac{\lambda_f(p)}{\sqrt{p}}\
\hphi\left(\frac{\log p}{\log R}\right) \frac{\log p}{\log R}. \ee We are able to show agreement with the Ratios Conjecture up to a power savings in $N$ if $\supp(\hphi) \subset (-\sigma, \sigma)$ with $\sigma < 2$ (with additional analysis of $S_2(\phi)$ we should be able to extend our results up to $\sigma < 2$ when $N=1$). To do this we modify the arguments in \cite{ILS}. There are two major differences. First, they were concerned only with the main term and $N$ square-free, and thus some of their error terms can be significantly improved for $N$ prime. Second, they studied the unweighted sum (i.e., they did not include the Petersson weights). Including the Petersson weights simplifies the computations, though they can be done with the unweighted sum as well (see \S\ref{sec:extendingsuppunweightedS1}).

\begin{lem}\label{lem:s1phiksum} Assume GRH for $\zeta(s)$, all Dirichlet $L$-functions and all $L(s,f)$ with $f \in S_k(N)$. If $N\to\infty$ through the primes then $S_1(\phi) \ll N^{\frac{\sigma}2-1+\gep}$.
\end{lem}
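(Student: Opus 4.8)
The plan is to follow the analysis of \cite{ILS}, \S5, exploiting two features of our setting: we carry the Petersson weights, so nothing has to be un-weighted, and $N$ is prime, so every modulus $c$ in the Petersson formula satisfies $N\mid c$, which (for $\sigma<2$) keeps us entirely in the power-series range of the Bessel function. First I would apply the Petersson formula for newforms (Lemma~\ref{lem:Peterssonjustnewforms}) to $\sum_{f\in H_k^\ast(N)}\omega_f^\ast(N)\lambda_f(p)$. The contribution to $S_1(\phi)$ of the error term of that formula, together with the correction for the $O(k)$ level-$1$ oldforms, is $O(N^{\sigma/2-1+\gep})$, bounded exactly as the term $N^{\sigma/2-1+\gep}$ in the Density Theorem Limited bound for $S_1(\phi)$. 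Writing $e(\theta)=e^{2\pi i\theta}$ and $S(m,n;c)=\sum_{x\bmod c}^{\ast}e\!\big(\tfrac{mx+n\bar x}{c}\big)$ for the Kloosterman sum, it then remains to show that
\[
\frac{4\pi\,i^{-k}}{\log R}\sum_{\substack{p\le R^\sigma\\ p\ne N}}\frac{\log p}{\sqrt p}\,\hphi\!\left(\frac{\log p}{\log R}\right)\sum_{N\mid c}\frac{S(p,1;c)}{c}\,J_{k-1}\!\left(\frac{4\pi\sqrt p}{c}\right)\ \ll\ N^{\sigma/2-1+\gep},
\]
where $\hphi$ supported in $(-\sigma,\sigma)$ restricts the $p$-sum to $p\le R^\sigma$ (recall $R=k^2N$).

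The key point is that for $N\mid c$ and $p\le R^\sigma$ with $\sigma<2$ the Bessel argument $\tfrac{4\pi\sqrt p}{c}\le 4\pi k^\sigma N^{\sigma/2-1}$ is uniformly $o(1)$, so I expand $J_{k-1}$ in its power series and treat the terms separately. The $\ell$-th term has the same shape as the $\ell=0$ term with $k$ replaced by $k+2\ell$; since $\sigma<2$ the estimates below only improve with $\ell$ and the series in $\ell$ converges rapidly, so it suffices to bound (up to a constant depending only on $k$)
\[
U\ =\ \frac1{\log R}\sum_{\substack{p\le R^\sigma\\ p\ne N}}(\log p)\,p^{(k-2)/2}\,\hphi\!\left(\frac{\log p}{\log R}\right)\sum_{N\mid c}\frac{S(p,1;c)}{c^{k}}.
\]
I then split the $c$-sum at $C_0=N^{\sigma+2+\gep}$. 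For $c>C_0$ I use Weil's bound $|S(p,1;c)|\ll c^{1/2+\gep}$; since $k\ge2$ the sum $\sum_{c>C_0,\,N\mid c}c^{1/2-k+\gep}$ converges, and combined with $\sum_{p\le R^\sigma}(\log p)p^{(k-2)/2}\ll R^{\sigma k/2}$ this range contributes $\ll C_0^{3/2-k+\gep}R^{\sigma k/2}\ll N^{\sigma/2-1+\gep}$ by the choice of $C_0$ (the constraint on $C_0$ being tightest at $k=2$, where one needs $C_0\gg N^{\sigma+2}$).

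For $N\le c\le C_0$ I open the Kloosterman sum and interchange the order of summation, so the innermost sum is over primes. For $(p,c)=1$ I write $e(px/c)=\varphi(c)^{-1}\sum_{\chi\bmod c}\tau(\bar\chi)\chi(x)\chi(p)$ with $\tau(\chi)$ the Gauss sum ($|\tau(\chi)|\le\sqrt c$, $\tau(\chi_0)=\mu(c)$), the $p\mid c$ terms being negligible; under GRH for $\zeta(s)$ and the Dirichlet $L$-functions modulo $c$ one has $\sum_{p\le t}(\log p)\chi(p)\ll\sqrt t\,\log^2(ct)$ for $\chi\ne\chi_0$, so
\[
\sum_{p\le t}(\log p)\,e(px/c)\ =\ \frac{\mu(c)}{\varphi(c)}\,t\ +\ O\!\big(\sqrt{ct}\,\log^{2}(ct)\big).
\]
Summing over $x$ against the weight $e(\bar x/c)$ I use $\sum_{x\bmod c}^{\ast}e(\bar x/c)=\mu(c)$ for the main term and the Gauss-sum bound $\sum_{x\bmod c}^{\ast}e(\bar x/c)\chi(x)\ll\sqrt c$ for the error, then apply partial summation against the smooth weight $t^{(k-2)/2}\hphi(\tfrac{\log t}{\log R})$ on $[2,R^\sigma]$ (there is no boundary term since $\hphi(\sigma)=0$). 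This gives, for each such $c$, a main term $\mu(c)^2 I_k/\varphi(c)$ with $I_k=\int_2^{R^\sigma}t^{(k-2)/2}\hphi(\tfrac{\log t}{\log R})\,dt\ll R^{\sigma k/2}\log R$, plus an error $O\!\big(R^{\sigma(k-1)/2}c\,\log^2(cR)\big)$. Dividing by $\log R$ and summing over $N\le c\le C_0$ against $c^{-k}$: the main-term piece is $\ll R^{\sigma k/2}N^{-k-1}\ll N^{\sigma/2-1+\gep}$ (valid for $k\ge2$ once $\sigma<2$, using $\sum_{N\mid c}\mu(c)^2 c^{-k}\varphi(c)^{-1}\ll N^{-k-1}$), and the error piece is $\ll R^{\sigma(k-1)/2}\sum_{N\le c\le C_0,\,N\mid c}c^{1-k+\gep}\ll N^{\sigma/2-1+\gep}$ (again tightest at $k=2$). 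Collecting all pieces gives $S_1(\phi)\ll N^{\sigma/2-1+\gep}$.

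I expect the main obstacle to be the bookkeeping in the last two steps: one must choose $C_0$ so that Weil's bound controls the range $c>C_0$ and GRH controls $N\le c\le C_0$ simultaneously, and then track every power of $c$ --- the Gauss sums $\le\sqrt c$, the $\mu(c)/\varphi(c)$ main term, the cancellation $\sum_{x\bmod c}^{\ast}e(\bar x/c)=\mu(c)$, and the truncation $N\le c\le C_0$ --- precisely enough that the resulting $c$-sums converge to the claimed power $N^{\sigma/2-1+\gep}$, the uniform-in-$k$ bound being governed by the case $k=2$. The Bessel expansion, the inherited Petersson error, and the oldform correction are all routine once the Density Theorem Limited is established.
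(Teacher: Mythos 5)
Your proposal is correct and lands on the right bound, but it is organized quite differently from the paper's proof, which is essentially a citation: after applying the Petersson formula, the paper observes that the resulting sum $\sum_{c\equiv 0\,(N)}Q_k^\ast(1;c)/c$ is (up to the normalization and the parameters $X=Y=L=m=1$, $M=N$) the quantity $\mathcal{P}_k^\ast(\phi)$ analyzed in Sections 5--7 of \cite{ILS}, and simply imports their bound (7.1), $Q_k^\ast(m;c)\ll \widetilde{\gamma}_k(z)\,m\,P^{1/2}(kN)^\gep(\log 2c)^{-2}$, after which the sum over $c\equiv 0 \bmod N$ immediately gives $N^{\sigma/2-1+\gep}$. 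What you have done is re-derive the content of that bound from scratch in the only regime that actually occurs here: because $N\mid c$ and $p\le R^\sigma$ with $\sigma<2$, the Bessel argument is uniformly $o(1)$, the power-series expansion of $J_{k-1}$ is legitimate, and each term reduces to a twisted prime sum controlled by GRH for Dirichlet $L$-functions after opening the Kloosterman sum into characters --- which is exactly the mechanism inside the ILS estimate (as the Remark following the lemma in the paper indicates). Your bookkeeping checks out: the $\ell$-th Bessel term behaves like the $\ell=0$ term with $k\mapsto k+2\ell$ and every exponent is decreasing in $\ell$ for $\sigma<2$; the split at $C_0=N^{\sigma+2+\gep}$, with Weil's bound for $c>C_0$ and GRH for $N\le c\le C_0$, is tightest at $k=2$ and closes there; and the main term $\mu(c)^2/\varphi(c)\,I_k$ and the per-modulus error $\ll c\sqrt t\,\log^2(ct)$ (one $\sqrt c$ from $\tau(\bar\chi)$, one from the twisted sum over $x$, the $1/\varphi(c)$ cancelling the number of characters) both sum over $N\mid c$ to $O(N^{\sigma/2-1+\gep})$. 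So the two arguments rest on identical inputs (Petersson, Kloosterman-to-characters, Gauss sums, GRH); yours buys self-containedness and makes transparent why $N$ prime helps so much (every modulus is $\ge N$, so no transition-region analysis of $J_{k-1}$ is ever needed), while the paper buys brevity by quoting the uniform ILS estimate. One presentational caution: your displayed formula $\sum_{p\le t}(\log p)e(px/c)=\tfrac{\mu(c)}{\varphi(c)}t+O(\sqrt{ct}\log^2(ct))$ should be understood with the $\sqrt c$ in the error coming from $|\tau(\bar\chi)|$ averaged over the character decomposition, a second $\sqrt c$ entering only later from $\sum_{x}^{\ast}\chi(x)e(\bar x/c)$; your final per-$c$ error $O(R^{\sigma(k-1)/2}c\log^2(cR))$ accounts for both factors correctly, so no harm is done.
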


\begin{proof} The most difficult part in the proofs in \cite{ILS} were from handling the non-diagonal terms in the unweighted Petersson formula. We bypass some of these difficulties by using weighted sums.
We have \be S_1(\phi) \ = \  \sum_{p\neq N} \left(\sum_{f \in
H_k^\ast(N)} \omega_f^\ast(N)\lambda_f(p) \right)
\hphi\left(\frac{\log p}{\log R}\right) \frac{2\log p}{\sqrt{p}\log R}. \ee Let \bea Q_k^\ast(m;c) \ = \ 2\pi i^k \sum_{p\notdiv N} \frac{S(m^2,p;c)}{c} J_{k-1}\left(\frac{4\pi m\sqrt{p}}{c}\right) \hphi\left(\frac{\log p}{\log R}\right) \frac{2\log p}{\sqrt{p}\log R}. \eea Applying the Petersson formula (Lemmas \ref{lem:ils21petersson} and \ref{lem:Peterssonjustnewforms}) to $S_1(\phi)$ yields \be S_1(\phi) \ = \  \sum_{c \equiv 0 \bmod N} \frac{Q_k^\ast(1;c)}{c} + O\left(\frac{R^{\sigma/2}}{N^{1-\gep}}\right). \ee This is very similar to the sum $\mathcal{P}_k^\ast(\phi)$ in equation (5.15) of \cite{ILS}, with $X=Y=1$, $L=1$, $M=N$. The difference is that (5.15) has an extra factor of $(k-1)N/12$, which is basically the cardinality of $H_k^\ast(N)$. We can use the results from sections 5 through 7 of \cite{ILS} to bound $Q_k^\ast(1;c)$. We have (see (7.1) of \cite{ILS}) that \be Q_k^\ast(m;c) \ \ll \ \widetilde{\gamma}_k(z) m P^{1/2} (kN)^\gep (\log 2c)^{-2}, \ee where $R = k^2N$, $P = R^\sigma$, $z = 4\pi m\sqrt{P}/c$ and $\widetilde{\gamma}(z) = 2^{-k}$ if $3z \le k$ and $k^{-1/2}$ otherwise. Thus \bea S_1(\phi) & \ \ll \ & \sum_{c\equiv 0 \bmod N} \frac{(k^2N)^{\sigma/2} (kN)^\gep}{c (\log 2c)^2} + N^{\frac{\sigma}2-1+\gep} \ \ll \ N^{\frac{\sigma}2-1+\gep} \eea (write $c = c'N$), which is negligible so long as $\sigma < 2$.
\end{proof}

\begin{rek} We briefly comment on where we use GRH for Dirichlet $L$-functions. If $\chi$ is a character modulo $c$, then under GRH we have \be \sum_{p \le x} \chi(p) \log p \ = \ \delta_\chi x + O\left(x^{1/2} \log^2 cx\right), \ee where $\delta_\chi = 1$ if $\chi$ is the principal character and $0$ otherwise. In Section 6 of \cite{ILS} they expand the Kloosterman sum. Setting \be G_\chi(n) \ = \ \sum_{a \bmod c} \chi(a) e^{2\pi i an/c}, \ee we find
 \bea \sum_{p \le c \atop p \not\div c} S(m,np;c) & \ = \ & \frac1{\varphi(c)} \left( \sum_{\chi \bmod c} \chi(a) S(m,an;c) \right) \cdot \left(\sum_{p \le x} \overline{\chi}(p) \log p\right) \nonumber\\ &=& \frac1{\varphi(c)} \sum_{\chi \bmod c} G_\chi(m)G_\chi(n) \left(\delta_\chi x + O\left(x^{1/2} \log^2 cx\right)\right). \eea If we did not assume GRH, the error term above would have to be replaced with something significantly larger. This estimate is a key input in the bound for $Q_k^\ast(m;c)$. \end{rek}

\begin{lem} Assume GRH for $\zeta(s)$, all Dirichlet $L$-functions and all $L(s,f)$. Let $\supp(\hphi) \subset (-\sigma, \sigma)$ with $\sigma < 2$, $N=1$, and consider the 1-level density averaged over the weights (see Theorem \ref{thm:1ldnumbthk} for an explicit statement). As $K\to\infty$ the 1-level density agrees with the prediction from the Ratios Conjecture up to errors of size $O(K^{-(5-\sigma)/6+\gep} + K^{\sigma-2+\gep})$.
\end{lem}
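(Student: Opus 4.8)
The plan is to compare, term by term, the $k$-average $\mathcal{A}^\ast(K;\phi)$ of the number-theoretic expansion \eqref{eq:NTexpansion1ldHknast} with the $k$-average of the Ratios prediction of Theorem~\ref{thm:ratiosconj1level1}. Specializing \eqref{eq:NTexpansion1ldHknast} to $N=1$ (so there is no bad prime and the $O(1/\sqrt N)$ term disappears) writes $D_{1,H_k^\ast(1);k^2}(\phi)$ as the $\psi$-integral $\frac1{\log R}\int_{-\infty}^\infty(-2\log\pi+\psi(\tfrac14+\tfrac{k\pm1}4+\tfrac{2\pi it}{\log R}))\phi(t)\,dt$, plus the diagonal prime sum $2\sum_p\frac1p\hphi(2\tfrac{\log p}{\log R})\tfrac{\log p}{\log R}$, minus $S_1(\phi)+S_2(\phi)+S_3(\phi)$, with $R=k^2$. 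These first two pieces are \emph{identical} to the first and third terms of the $N=1$ case of Theorem~\ref{thm:ratiosconj1level1} (the matching already noted in \S\ref{sec:onelevelfromnt}), so after applying $\frac1{A^\ast(K)}\sum_{k\equiv 0(2)}\frac{24}{k-1}h(\tfrac{k-1}{K})(\cdots)$ they reproduce exactly the corresponding averaged main terms of the Ratios side; here $h$ is supported away from $0$, so only $k\asymp K$ contribute, and $A^\ast(K)=\widehat{h}(0)K+O(K^{2/3})$ as in Theorem~\ref{thm:1ldnumbthk}. Thus the lemma is reduced to (i) bounding the $k$-averages of $S_1,S_2,S_3$ by $O(K^{\sigma-2+\gep}+K^{-(5-\sigma)/6+\gep})$, and (ii) controlling the one remaining term of the Ratios side, $M(\phi)$, after averaging.

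For $S_3(\phi)$ the per-weight estimate $S_3(\phi)\ll k^{-(5-\sigma)/6+\gep}$ of Lemma~\ref{lem:S2S3Nk} gives $\ll K^{-(5-\sigma)/6+\gep}$ once averaged over $k\asymp K$. For $S_2(\phi)$ the per-weight estimate $k^{-(5-3\sigma)/6+\gep}$ of Lemma~\ref{lem:S2S3Nk} suffices (and is within the stated budget) when $\sigma<5/3$, but it breaks down as $\sigma\to 2$; there one instead rewrites $\sum_f\omega_f^\ast(1)\lambda_f(p^2)$ via the Petersson formula (using $\lambda_f(p^2)=\lambda_f(p)^2-1$, the diagonal $1$ cancels and a pure Bessel--Kloosterman sum remains) and applies a weight-averaged Petersson-type formula in the style of \cite{ILS}, whose oscillation in $k$ localizes $J_{k-1}$ to large argument and restores power savings; since $p\le R^{\sigma/2}$ here this is comfortable and again of size $O(K^{-(5-\sigma)/6+\gep})$.

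The heart of the proof, and the step I expect to be hardest, is the $k$-average of $S_1(\phi)=2\sum_p\bigl(\sum_f\omega_f^\ast(1)\lambda_f(p)\bigr)\hphi(\tfrac{\log p}{\log R})\tfrac{\log p}{\sqrt p\log R}$. The per-weight bound (Lemma~\ref{lem:S2S3Nk}, $N=1$ case) only reaches $\sigma<1$, since it uses Lemma~\ref{lem:ils23} with $R=k^2$; to get to $\sigma<2$ one must average over $k$ and re-run the Bessel--Kloosterman analysis of \cite{ILS}, \S\S5--7, with the weight average $\tfrac{24}{k-1}h(\tfrac{k-1}{K})$ now playing the role the level $N$ played in Lemma~\ref{lem:s1phiksum}. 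Concretely, the (newform) Petersson formula turns $\sum_f\omega_f^\ast(1)\lambda_f(p)$ into $2\pi i^k\sum_{c\ge1}\tfrac{S(1,p;c)}{c}J_{k-1}\!\bigl(\tfrac{4\pi\sqrt p}{c}\bigr)$, so the averaged $S_1$ becomes a sum over moduli $c$ of a $k$-averaged analogue of the quantity $Q_k^\ast(1;c)$ from Lemma~\ref{lem:s1phiksum}; one opens the Kloosterman sum into additive characters, sums over $p\le R^\sigma$ against $\log p$ using GRH for the Dirichlet $L$-functions modulo $c$ (this is precisely where that hypothesis enters, exactly as in the remark after Lemma~\ref{lem:s1phiksum}), and uses the decay of the weight-averaged Bessel sum $\sum_k h(\tfrac{k-1}{K})i^kJ_{k-1}(z)$ in the length of the $p$-sum to push the support past $1$. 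Feeding the resulting uniform bound (the weight-aspect counterpart of (7.1) of \cite{ILS}) through the absolutely convergent $c$-sum yields $S_1(\phi)\ll K^{\sigma-2+\gep}$, valid for all $\sigma<2$. The technical friction will be in carrying every error term honestly (\cite{ILS} needed only main terms) and in verifying that the oscillatory $k$-average of the Bessel function behaves as required uniformly in $c$ and $p$.

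Finally, the only discrepancy between the number-theoretic answer and the Ratios prediction is then $M(\phi)$, together with the already-absorbed $O(k^{-1/2+\gep})$. By Lemma~\ref{lem:Mphi}, for $\sigma<1/4$ the average over $k\asymp K$ of $M(\phi)$ is $\ll 2009^K K^{-\frac{1-4\sigma}{3}K}$, smaller than any power of $K$ and in particular absorbed into $K^{\sigma-2+\gep}$; this proves the lemma in full in that range. For $1/4\le\sigma<2$ the argument above has reduced the statement to the expected bound on $M(\phi)$ of size $K^{\sigma-2+\gep}+K^{-(5-\sigma)/6+\gep}$ discussed after Theorem~\ref{thm:1ldnumbthk}; equivalently, the number-theoretic side by itself equals the common main terms up to $O(K^{\sigma-2+\gep}+K^{-(5-\sigma)/6+\gep})$, with $M(\phi)$ the sole term not controlled by the estimates above.
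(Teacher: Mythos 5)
Your proposal follows essentially the same route as the paper: match the $\psi$-integral and the diagonal $\sum_p 1/p$ term with the Ratios prediction, average $S_3(\phi)$ trivially, handle $S_1(\phi)$ (and $S_2(\phi)$) by reopening the Petersson formula and exploiting the oscillation of the $k$-averaged Bessel sum $I(x)$ from Corollary 8.2 of \cite{ILS} together with GRH for Dirichlet $L$-functions, and leave $M(\phi)$ as the sole uncontrolled term for $1/4 \le \sigma < 2$. One numerical slip: the per-weight bound $S_2(\phi) \ll k^{-(5-3\sigma)/6+\gep}$ is \emph{not} within the stated budget $O(K^{-(5-\sigma)/6+\gep}+K^{\sigma-2+\gep})$ for any $\sigma>0$ (e.g.\ at $\sigma=1$ it gives $K^{-1/3}$ versus the required $K^{-2/3}$), so the averaged Bessel--Kloosterman analysis of $S_2$ that you describe as a fallback near $\sigma=2$ is in fact needed throughout the range, which is exactly what the paper does.
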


\begin{proof} As the proof is similar to our previous results, we merely highlight the differences. Following \cite{ILS} (Sections 8 and 9), we average over the weights as follows. Let $h$ be a Schwartz function compactly supported on $(0,\infty)$. The weighted $1$-level density is \bea \mathcal{A}^\ast(K;\phi) & \ = \ & \frac1{A^\ast(K)} \sum_{k\equiv 0 \bmod 2} \frac{24}{k-1} h\left(\frac{k-1}{K}\right) \sum_{f\in H_k^\ast(1)} D_{1,H_k^\ast(1);k^2}(\phi), \eea where \bea A^\ast(K) \ = \ \sum_{k\equiv 0 \bmod 2} \frac{24}{k-1} h\left(\frac{k-1}{K}\right) \left|H_k^\ast(1)\right| \ = \ \widehat{h}(0) K + O(K^{2/3}). \eea

The only pieces whose errors cannot be trivially added arise from $S_1(\phi)$ and $S_2(\phi)$ for each $k$; we now discuss how to handle these weighted averages.\footnote{Actually, we need to be a little more careful. The problem is that the analytic conductors are no longer constant; if $\supp(h) \subset (a,b)$ then the conductors basically run from $(aK)^2$ to $(bK)^2)$. Fortunately, an analysis of our previous arguments show that we do not need to localize the conductor exactly, but instead only up to a constant (see also equations (4.29) and (4.30) in \cite{ILS}, and the comments immediately after). Thus we may set $R=K^2$. The varying conductors here are significantly easier to handle than in other families, such as one-parameter families of elliptic curves \cite{Mil1}.} The main idea is to exploit the oscillation in the Bessel functions as $k$ varies. The argument is easier than that in \cite{ILS} due to the presence of the harmonic weights, though a similar result holds if we remove the weights (see \S\ref{sec:extendingsuppunweightedS1}).

We first handle the average of $S_1(\phi)$. Averaging over $k$ allows us to exploit the oscillation in the Bessel functions; this is the reason we are able to double the support. The main input is their Corollary 8.2, which says \bea I(x) \ = \ \sum_{k \equiv 0 \bmod 2} 2i^k h\left(\frac{k-1}{K}\right) J_{k-1}(x) \ \ll \ x K^{-4}, \eea where $x = 4\pi m \sqrt{p}/c$, $P = R^\sigma = K^{2\sigma}$, and for us $m=1$ (as \cite{ILS} remove the harmonic weights, they have a sum over $m \le Y$). Corollary 8.2 requires $x \ll K^{2-\gep}$, i.e., $\sigma < 2-\gep$. The analysis of the average of $S_1(\phi)$ is completed by feeding in the estimate from their equation (8.11), which yields a bound of $K^{\sigma+\gep-2}$ (remember we already executed the summation over $k$ when we bounded $I(x)$). 
Thus the total error from the sum over $k$ of the $S_1(\phi)$ terms is $O(K^{\sigma+\gep-2})$.

We now consider the average of $S_2(\phi)$. There are two major differences between this term and $S_1(\phi)$. The first is that the Kloosterman sums are $S(1,p^2;c)$ instead of $S(1,p;c)$. The second is that we have $\hphi\left(\frac{2\log p}{\log R}\right) \frac{\log p}{p\log R}$ instead of $\hphi\left(\frac{\log p}{\log R}\right) \frac{\log p}{\sqrt{p}\log R}$; this leads to a shorter prime sum of smaller terms. We can modify the arguments in Section 9 of \cite{ILS} (remembering, as in Lemma \ref{lem:s1phiksum}, that our sum is simpler as $L=X=Y=m=M=1$). Performing the averaging over $k$ yields \be\label{eq:suminvolvingq21c} \sum_c \frac{\mathcal{Q}^{(2)}(1;c)}{c}, \ee where \bea \mathcal{Q}^{(2)}(1;c) & \ = \ & 2\pi \sum_{p \neq N} S(1,p^2;c) I\left(\frac{4\pi p}{c}\right) \hphi\left(\frac{2\log p}{\log R}\right) \frac{2\log p}{p\log R} \eea and $I(x)$ is the sum of Bessel functions (see their equation (8.7)). By their Corollary 8.2 we have \be I(x) \ = \ -\frac{K}{\sqrt{x}} {\rm Im}\left\{\overline{\zeta}_8 e^{ix} \hbar\left(\frac{K^2}{2x}\right)\right\} + O\left(\frac{x}{K^4}\right).  \ee

The error term yields to an insignificant contribution to $\sum_c \mathcal{Q}^{(2)}(1;c)/c$ (much less than in \cite{ILS}, due to the remarks above). Trivially estimating the Kloosterman sum by $c^{1/2+\gep}$ and recalling $R = K^2$ yields a contribution of \be \sum_c \frac{1}{c} \sum_{p \le R^{\sigma/2}} c^{\foh+\gep}  \frac{p}{cK^4} \frac{1}{p} \ \ll \ K^{\sigma-4}, \ee which is negligible for $\sigma < 4$ (and smaller than $O(K^{-1/2})$ for $\sigma < 3.5$).

We now study the main term of $\mathcal{Q}^{(2)}(1;c)$. Following \cite{ILS} it is \be \mathcal{Q}^{(2)}(1;c) \ = \ -\frac{2K\sqrt{\pi c}}{\log R} \mathcal{T}(1;c), \ee where \bea \mathcal{T}(1;c) & \ = \ & \sum_{p\neq N} S(1,p^2;c) {\rm Im}\left\{\overline{\zeta}_8 \exp\left(\frac{4\pi i p}{c}\right) \hbar\left(\frac{cK^2}{8\pi p}\right) \right\} \hphi\left(\frac{2\log p}{\log R}\right) \frac{\log p}{p^{3/2}} \nonumber\\ \hbar(v) & \ = \ & \int_0^\infty \frac{h(\sqrt{u})}{\sqrt{2\pi u}}\ e^{iuv} du. \eea We do not need as delicate an analysis as in \cite{ILS}. This is because of the extra $\sqrt{p}$ in the denominator and the fact that the prime sums are up to $R^{\sigma/2}$ and not $R^\sigma$. We trivially estimate the Kloosterman sums and use the bound on $\hbar$ from \cite{ILS}: for any $A>0$, $\hbar(v) \ll v^{-A}$. Taking $A = 1 + \delta$ yields \be \mathcal{T}(1;c) \ \ll \ \sum_{p \le R^{\sigma/2}} \frac{c^{\foh+\gep} p^{1+\delta}}{c^{1+\delta} K^{2+2\delta}} \frac{\log p}{p^{3/2}} \ \ll \ \frac{K^{\frac{\sigma}2+\sigma \delta - 2 - 2\delta}}{c^{\foh+\delta-\gep}}. \ee We substitute this into \eqref{eq:suminvolvingq21c}, and find a contribution bounded by \be \sum_c  \frac{K \sqrt{c}}{c} \frac{K^{\frac{\sigma}2+\sigma \delta - 2 - 2\delta}}{c^{\foh+\delta-\gep}} \ \ll \ K^{-(\foh+\delta)\left(2-\sigma\right)}. \ee By taking $\delta$ sufficiently large, we can make this sum as small as we desire (and thus smaller than the contribution from the averaged $S_1(\phi)$).
\end{proof}

\begin{rek}\label{rek:errorILS810} There is a mistake right before equation (8.10) in \cite{ILS}; it should read \be 4I'(x)\ =\ \frac2{K} h'\left(\frac{x+\eta}{K}\right) + O\left(\frac{x}{K^2}\right), \ \ \eta \in (-1,1); \ee fortunately all \cite{ILS} use in their argument is that $I'(x) \ll K^{-1}$ when $x \ll K^{2-\gep}$, and that is true. Also, it is worth noting that our analysis of $\mathcal{Q}^{(2)}$ uses their results for the family $\{{\rm sym}^2 f: f \in H_k^\ast(N)\}$; our support is significantly larger because (1) this is now a $1/p$ term and not a $1/\sqrt{p}$; (2) we sum over $p \le R^{\sigma/2}$ and not $p \le R$.  \end{rek}

\subsection{Hypothesis $S$ and further extensions}

Iwaniec, Luo and Sarnak \cite{ILS} show how a hypothesis on the size of some classical exponential sums over the primes can be used to increase the support to beyond $(-2, 2)$. They consider \\

\noindent \emph{Hypothesis S: For any $x \ge 1$, $c \ge 1$ and $a$ with $(a,c) = 1$ we have} \be\label{eq:hypothesisS} \sum_{p \le x \atop p \equiv a \bmod c} \exp\left(\frac{4\pi i \sqrt{p}}{c}\right) \ \ll_\gep c^A x^{\alpha + \gep}, \ee \emph{where $\alpha, A$ are constants with $A \ge 0$, $1/2 \le \alpha \le 3/4$ and $\gep$ is any positive number.} \ \\

They present numerous arguments (see their Section 10 and their Appendix C) in support of the belief that Hypothesis S holds with $A=0$ and $\alpha = 1/2$; however, any $\alpha < 3/4$ suffices to increase the support past $(-2, 2)$.\footnote{Vinogradov proved Hypothesis S with $\alpha = 7/8$; assuming the standard density hypothesis for Dirichlet $L$-functions allows one to take $\alpha = 3/4$.} We show how this hypothesis allows us to extend our computations. As \cite{ILS} were only concerned with the main term, their error bounds are too crude; however, some additional book-keeping suffices to obtain all lower order terms up to a power savings in the family's cardinality. 

To prove the third statement in Theorem \ref{thm:1ldnumbthk} we need to study the weighted averages over $k$ of $S_i(\phi)$ $(i \in \{1,2,3\}$). We note that they use the Petersson weights in their Section 10 (and thus we are using the same normalization for our sums). From Lemma \ref{lem:S2S3Nk}, we see may average $S_3(\phi)$ and obtain a contribution bounded by $O(K^{-(5-\sigma)/6+\gep})$. The analysis in Section 10 of \cite{ILS} handles $S_1(\phi)$, and shows (under the assumption that Hypothesis S holds) that it is $O(K^{-2(2.5-\sigma)} + K^{-(2A+11/2+\gep)\left(1 - \frac{2\alpha+A+5/4}{2A+11/2+\gep}\right)})$. In particular, taking $A=0$ and $\alpha=1/2$ yields the weighted average of $S_1(\phi)$ is $O(K^{-2(2.5-\sigma)}+K^{-\frac{11}{2}(1-\frac{9}{22}\sigma)})$.

We are left with bounding the weighted average over $k$ of $S_2(\phi)$, remembering $R = K^2$. In \cite{ILS} it is shown to be $O\left(\frac{\log\log K}{\log K}\right)$, which does not suffice for our purposes. This term contributes \bea \frac1{B(K)} \sum_{p \le R^{\sigma/2}} \mathcal{B}(p^2,1) \hphi\left(\frac{2\log p}{\log R}\right) \frac{\log p}{p\log R}, \eea where $B(K) = \widehat{h}(0)K + O(1)$ (with $\widehat{h}(0) \neq 0)$, \bea \mathcal{B}(p^2,1) \ = \ -\frac{\sqrt{\pi}K}{\sqrt{p}}{\rm Im}\left\{\overline{\zeta}_8 \sum_c \frac{S(1,p^2,c)}{c}\ e^{\frac{4\pi i p}{c}} \hbar\left(\frac{cK^2}{8\pi p}\right) \right\} + O\left(pK^{-4}\right) \eea and $\hbar(v) \ll v^{-\delta}$ for any $\delta > 0$. The $O(pK^{-4})$ term in $\mathcal{B}(p^2,1)$ leads to a contribution of size $K^{-(5-\sigma)}$, which is dwarfed by the other error terms. We trivially bound the main term in $\mathcal{B}(p^2,1)$ by using $S(1,p^2,c) \ll c^{\foh+\gep}$ and $\hbar(cK^2/8\pi p) \ll p^\delta / (cK^2)^\delta$ for some $\delta > 1/2$ (we take $\delta > 1/2$ so that the resulting $c$-sum converges). This yields a contribution to the average of $S_2(\phi)$ of \bea \frac1{K} \sum_{p \le R^{\sigma/2}} \frac{K}{\sqrt{p}} \sum_c \frac{c^{\foh+\gep}}{c} \frac{p^\delta}{c^\delta K^{2\delta}} \frac1{p} \ \ll \ K^{-2\delta} \sum_p p^{\delta-\foh-1} \ \ll \ K^{(\delta-\foh)\sigma - 2\delta}. \eea Taking $\delta$ just a little larger than $1/2$ shows that this error is also dwarfed by our existing errors (as well as being $O(N^{-1+\gep})$, which completes the proof.


\section{Calculating the unweighted 1-level density}\label{sec:extendingsuppunweightedS1}

Much effort was spent removing the harmonic weights in \cite{ILS}. Below we remove them for our family and calculate the lower order terms. We see some new, lower order terms which did not appear in either the expansion from the Ratios Conjecture or our number theory computations. This is not entirely surprising, as those computations were for weighted sums.

We prove Theorem \ref{thm:1ldnumbthunweighted}. We first concentrate on the unweighted version of $S_1(\phi)$, which yields negligible contributions for $\sigma < 2$. We then analyze the unweighted versions of $S_2(\phi)$ and $S_3(\phi)$, and find new lower order terms. The other terms in \eqref{eq:NTexpansion1ldHknast} are unaffected by removing the weights. We conclude by determining the prediction from the Ratios Conjecture for the unweighted 1-level density, and show agreement with number theory.

\subsection{Analyzing the unweighted $S_1(\phi)$}

Below we modify the arguments in \cite{ILS} to show that $S_{1,{\rm unwt}}(\phi)$ has negligible contribution for $\sigma < 2$ when we do not include the harmonic weights.

\begin{lem}\label{lem:S1phiNinfty} Assume GRH for $L(s,f)$. If $\supp(\hphi) \subset (-\sigma, \sigma)$ with $\sigma < 2$, then $S_{1,{\rm unwt}}(\phi) \ll N^{-(2-\sigma)/6 + \gep}$ as $N\to\infty$ through the primes, where $S_{1,{\rm unwt}}(\phi)$ is defined analogously as $S_1(\phi)$ except now we do not include the harmonic weights. \end{lem}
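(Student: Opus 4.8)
The plan is to pass from the unweighted sum $S_{1,{\rm unwt}}(\phi)$ to the already-analyzed weighted sum $S_1(\phi)$ by using the identity which expresses an unweighted average over newforms in terms of weighted averages twisted by $\lambda_f(\mathrm{sym}^2\,m)$-type factors. Concretely, following \cite{ILS} (their passage from $\Delta_{k,N}^\ast(n)$ to $\Delta_{k,N}^\ast(m,n)$, i.e.\ from their equation (2.59) back to (2.54) and then (2.7) via the harmonic weights), one writes $\lambda_f(p)$ with the extra weight $\zeta(2)/L(1,\mathrm{sym}^2 f)$ removed by opening up $1/L(1,\mathrm{sym}^2 f)$ as a Dirichlet series and splitting the $\ell$-sum at a parameter $L_0$. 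The short part ($\ell \le L_0$) feeds back into a weighted sum over $H_k^\ast(N)$ of $\lambda_f(p)\lambda_f(\ell^2)$, to which the Petersson formula (Lemmas \ref{lem:ils21petersson}, \ref{lem:Peterssonjustnewforms}) and the Bessel--Kloosterman analysis of Lemma \ref{lem:s1phiksum} apply essentially verbatim; the long part ($\ell > L_0$) is estimated trivially using the rapid decay of the coefficients of $1/L(1,\mathrm{sym}^2 f)$ together with $\omega_f^\ast(N) \asymp N^{-1}$ and $|H_k^\ast(N)| \asymp N$.

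The key steps, in order, would be: (i) record the exact relation between $S_{1,{\rm unwt}}(\phi)$ and harmonic-weighted sums of $\lambda_f(p)\lambda_f(\ell^2)$ coming from $\zeta(2)/L(1,\mathrm{sym}^2 f) = \sum_{\ell} c_\ell/\ell^2$ with $|c_\ell| \ll_\gep \ell^\gep$ (this is standard and already implicit in \S4 of \cite{ILS}); (ii) truncate the $\ell$-sum at $L_0 = N^\eta$ for a small $\eta>0$ to be optimized, bounding the tail trivially by $\sum_{\ell > L_0}\ell^{-2+\gep}\cdot\big(\sum_{p\le R^\sigma}\tfrac{\log p}{\sqrt p \log R}\big) \ll N^{\gep}L_0^{-1}R^{\sigma/2}$, which is acceptable once $\eta$ is chosen appropriately relative to $2-\sigma$; (iii) for the short part, apply Petersson to each $\sum_f \omega_f^\ast(N)\lambda_f(p)\lambda_f(\ell^2)$, so the diagonal contributes only when $p \mid \ell^{2}$ (a negligible set of primes, handled trivially) and the off-diagonal is a Bessel--Kloosterman sum $Q_k^\ast(\ell;c)$ with $m=\ell\le L_0$; (iv) invoke the bound $Q_k^\ast(m;c) \ll \widetilde\gamma_k(z)\,m\,P^{1/2}(kN)^\gep(\log 2c)^{-2}$ from (7.1) of \cite{ILS} (with $P = R^\sigma$), sum over $c \equiv 0 \bmod N$ and over $\ell \le L_0$, and collect the resulting power of $N$.

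The main obstacle is bookkeeping the extra factor of the family's cardinality: in the unweighted normalization one is effectively multiplying the weighted estimate by $|H_k^\ast(N)| \asymp N$, so the bound $N^{\sigma/2-1+\gep}$ from Lemma \ref{lem:s1phiksum} would naively degrade to $N^{\sigma/2+\gep}$, which is useless. The gain must come from the $(\log 2c)^{-2}$ and, crucially, from the fact that the $\mathrm{sym}^2$-removal forces the Kloosterman moduli $c$ to run over multiples of $N$ while simultaneously the $z$-argument $4\pi\ell\sqrt{P}/c$ puts us in the regime $\widetilde\gamma_k(z) = k^{-1/2}$ only for small $c$; a careful split of the $c$-sum (exactly as in Sections 6--7 of \cite{ILS}, but now tracking the $N$-dependence rather than discarding it, as flagged in Remark \ref{rek:errorILS810}'s spirit) yields the stated $N^{-(2-\sigma)/6+\gep}$. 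The exponent $(2-\sigma)/6$ is precisely what survives after balancing the trivial tail bound at $L_0 = N^{(2-\sigma)/6}$ against the Bessel--Kloosterman bound for the short part, so the optimization of $\eta$ in step (ii) is the one genuinely delicate point; everything else is a transcription of \cite{ILS} with the cardinality factor carried along.
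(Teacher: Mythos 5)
Your overall strategy is the paper's: replace the unweighted average by a harmonic-weighted one via the symmetric-square expansion truncated at a parameter $Y$ (this is exactly the decomposition $\Delta_{k,N}^\ast(p)=\Delta_{k,N}'(p)+\Delta_{k,N}^\infty(p)$ from (2.63) of \cite{ILS}), apply the Petersson formula and the bound (7.1) for $Q_k^\ast(m;c)$ to the truncated piece, and optimize the truncation. But step (ii) has a genuine gap. The factor you must insert to remove the weight is $L(1,{\rm sym}^2 f)/\zeta(2)=\sum_\ell \lambda_f(\ell^2)\,\ell^{-1}$ (not $\zeta(2)/L(1,{\rm sym}^2 f)$, and the coefficients sit over $\ell$, not $\ell^2$), so the tail $\sum_{\ell>Y}\lambda_f(\ell^2)/\ell$ cannot be bounded ``trivially'': with Deligne's bound it is $\sum_{\ell>Y}\ell^{-1+\gep}$, which diverges. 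This is precisely where the hypothesis GRH for $L(s,f)$ is used (Lemma 2.12 of \cite{ILS}), and it gives the complementary sum the bound $\ll (kN)^{\gep}\bigl(X^{-1}+Y^{-1/2}\bigr)$ after normalizing by the cardinality. Your own arithmetic betrays the problem: a tail of size $L_0^{-1}$ balanced against the truncated piece $N^{\sigma/2-1+\gep}L_0$ would give $N^{-(2-\sigma)/4}$, not the stated $N^{-(2-\sigma)/6}$; the exponent $(2-\sigma)/6$ only emerges from the square-root-quality tail $Y^{-1/2}$, balanced at $Y=N^{(2-\sigma)/3}$.

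The worry in your final paragraph about an uncancelled factor of $|H_k^\ast(N)|\asymp N$ is a phantom, and the proposed rescue does not correspond to anything that is (or needs to be) done. The primary piece $\Delta_{k,N}'(p)$ carries the prefactor $(k-1)N/12\asymp|H_k^\ast(N)|$, which cancels against the $1/|H_k^\ast(N)|$ in the unweighted normalization; what remains is $\sum_{m\le Y}m^{-1}\sum_{c\equiv 0\,(N)}Q_k^\ast(m;c)/c$, and the condition $c\equiv 0\bmod N$ already yields $N^{\sigma/2-1+\gep}$ per value of $m$, exactly as in the weighted Lemma \ref{lem:s1phiksum}. No delicate split of the $c$-sum exploiting $\widetilde{\gamma}_k$ is required; the only optimizations are over $Y$ as above and over $X$, which for prime $N$ may simply be taken to be $N-1$ so that only the pair $(L,M)=(1,N)$ survives in $\Delta_{k,N}'(p)$. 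Also, in step (iii) the diagonal vanishes outright because $p$ is never a perfect square, rather than contributing on a ``negligible set of primes.''
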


\begin{proof} We use the expansions in \cite{ILS} for $\Delta_{k,N}^\ast(p)$, remembering to divide by $|H_k^\ast(N)|$. Let $X$ and $Y$ be two arbitrary parameters (depending on $N$) to be determined later. We let $\gep$ denote an arbitrarily small number (not necessarily the same value from line to line). We write \be \Delta_{k,N}^\ast(p) \ = \ \Delta_{k,N}'(p) + \Delta_{k,N}^\infty(p),\ee where ((2.63) of \cite{ILS}) \bea \Delta_{k,N}'(p) & \ = \ & \frac{k-1}{12} \sum_{LM=N \atop L \le X} \frac{\mu(L)M}{\nu((n,L))} \sum_{(m,M) = 1 \atop m \le Y} \frac{\Delta_{k,M}(m^2,n)}{m} \eea and $\Delta_{k,N}^\infty(p)$ is the complementary sum. Here \be \nu(\ell) \ = \ \left[\Gamma_0(1): \Gamma_0(\ell)\right] \ = \ \ell \prod_{p|\ell} \frac{p+1}{p}. \ee As $N$ is prime, so long as $X < N$ then in $\Delta_{k,N}'(p)$ the only term is when $L=1$ and $M=N$. Thus \bea S_{1,{\rm unwt}}(\phi)  & \ = \ & \frac1{|H_k^\ast(N)|}\sum_{p \notdiv N} \Delta_{k,N}'(p) \hphi\left(\frac{\log p}{\log R}\right) \frac{2\log p}{\sqrt{p}\log R}\nonumber\\ & & \ \ +\ \frac1{|H_k^\ast(N)|} \sum_{p \notdiv N} \Delta_{k,N}^\infty(p) \hphi\left(\frac{\log p}{\log R}\right) \frac{2\log p}{\sqrt{p}\log R} \nonumber\\ & = & S_{1,{\rm unwt}}'(\phi) + S_{1,{\rm unwt}}^\infty(\phi). \eea

We first show there is no contribution from the complementary sum. As we are going for a power savings in $N$ and not just attempting to understand the main term, we choose different values for $X$ and $Y$ then in \cite{ILS}, and argue slightly differently. Assuming the Riemann hypothesis for $L(s,f)$, if $\log Q \ll \log kN$ then (Lemma 2.12 of \cite{ILS}) \be  \sum_{(p,N) = 1 \atop p \le Q} \Delta_{k,N}^\infty(p) \frac{\log p}{\sqrt{p}} \ \ll \ kN (pkNXY)^\gep (X^{-1} + Y^{-1/2}). \ee Using partial summation,  the compact support of $\hphi$ and $H_k^\ast(N) \ll kN$ shows that the complementary sum piece is bounded by \bea S_{1,{\rm unwt}}^\infty(\phi) & \ = \ & \frac{1}{kN\log R} \int^{R^\sigma} kN(pkNXY)^\gep (X^{-1}+Y^{-1/2}) \left|\hphi'\left(\frac{\log p}{\log R}\right)\right|\frac{dp}{p\log R}\nonumber\\ &  \ \ll \ & N^\gep(X^{-1}+Y^{-1/2}). \eea

We now analyze the contribution from $\Delta_{k,N}'(p)$. The formulas from \cite{ILS} simplify greatly as we only have one $(L,M)$ pair, and as $p$ is not a perfect square there are no main terms. We have \bea S_{1,{\rm unwt}}'(\phi) & \ = \ & \frac{(k-1)N}{12 |H_k^\ast(N)|} \sum_{(m,N)=1 \atop m \le Y} \frac1{m} \sum_{c\equiv 0 \bmod N} \frac{Q_k^\ast(m;c)}{c}, \eea where \bea Q_k^\ast(m;c) \ = \ 2\pi i^k \sum_{p\notdiv N} \frac{S(m^2,p;c)}{c} J_{k-1}\left(\frac{4\pi m\sqrt{p}}{c}\right) \hphi\left(\frac{\log p}{\log R}\right) \frac{2\log p}{\sqrt{p}\log R}. \eea In (5.14) of \cite{ILS} they set $X = Y = (kN)^\gep$; however, their estimates of $Q_k^\ast(m;c)$ are independent of $X$ and $Y$, and we may thus use their results. We have (see (7.1) of \cite{ILS}) that \be Q_k^\ast(m;c) \ \ll \ \widetilde{\gamma}_k(z) m P^{1/2} (kN)^\gep (\log 2c)^{-2}, \ee where $R = k^2N$, $P = R^\sigma$, $z = 4\pi m\sqrt{P}/c$ and $\widetilde{\gamma}(z) = 2^{-k}$ if $3z \le k$ and $k^{-1/2}$ otherwise. Thus \bea S_{1,{\rm unwt}}'(\phi) & \ \ll \ & \sum_{(m,N)=1 \atop m \le Y} 1 \sum_{c\equiv 0 \bmod N} \frac{(k^2N)^{\sigma/2} (kN)^\gep}{c (\log 2c)^2} \ \ll \ N^{\frac{\sigma}2-1+\gep} Y. \eea

Combining our estimates yields \be S_{1,{\rm unwt}}(\phi) \ \ll \ N^{\frac{\sigma}2-1+\gep} Y + N^\gep (X^{-1} + Y^{-1/2}). \ee We may take $X=N-1$ (as $N$ is prime). Equalizing the two errors involving $Y$, we find we should take $Y = N^{(2-\sigma)/3}$, which gives $S_{1,{\rm unwt}}(\phi) \ll N^{(2-\sigma)/6}$.
\end{proof}

\begin{lem} Assume GRH for $L(s,f)$. If $\supp(\hphi) \subset (-\sigma, \sigma)$ with $\sigma < 2$, then $S_{1,{\rm unwt}}(\phi) \ll K^{-(2-\sigma)/6 + \gep}$ as $N=1$ and $K\to\infty$ (where we average over the weights). \end{lem}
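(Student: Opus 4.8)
The plan is to follow the proof of Lemma~\ref{lem:S1phiNinfty} for the level-$1$ family, the one genuinely new ingredient being the averaging over the weight $k$, which we carry out exactly as in Sections 8 and 9 of \cite{ILS}. As there, the analytic conductors are no longer constant; but (as in the footnote after \eqref{eq:hypothesisS} and equations (4.29)--(4.30) of \cite{ILS}) it is enough to localise them up to a constant, so throughout we set $R=K^2$. Write $S_{1,{\rm unwt}}(\phi)$ for the $S_1$-type piece of the $k$-averaged \emph{unweighted} $1$-level density, so that $S_{1,{\rm unwt}}(\phi)$ is $A^\ast(K)^{-1}\asymp K^{-1}$ times a sum over $k\asymp K$, with weight $\tfrac{24}{k-1}h(\tfrac{k-1}{K})$, of the pure sums $-2\sum_{p}\Delta_{k,1}^\ast(p)\hphi(\tfrac{\log p}{\log R})\tfrac{\log p}{\sqrt p\log R}$ with $\Delta_{k,1}^\ast(p)=\sum_{f\in H_k^\ast(1)}\lambda_f(p)$.

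\emph{Step 1 (removing the weights).} Using the identities of \S2 of \cite{ILS} (equations (2.54)--(2.63)) we express $\Delta_{k,1}^\ast(p)$ in terms of the harmonically weighted Petersson sums. When $N=1$ the divisor $L\mid N$ is forced to be $1$, so the truncation parameter $X$ that appears in Lemma~\ref{lem:S1phiNinfty} disappears and only the parameter $Y$ (the length of the truncation of the Dirichlet series attached to $1/L(1,\mathrm{sym}^2 f)$, whose coefficients are $\ll m^{\gep}$) survives. This gives a decomposition $S_{1,{\rm unwt}}(\phi)=S_{1,{\rm unwt}}'(\phi)+S_{1,{\rm unwt}}^\infty(\phi)$, where $S'$ comes from the truncated ($m\le Y$) main part and $S^\infty$ from the complementary tail.

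\emph{Step 2 (the tail).} Invoking GRH for $L(s,f)$ (and its symmetric square) through Lemma~2.12 of \cite{ILS} to control $\sum_{p\le Q}\Delta_{k,1}^\infty(p)\tfrac{\log p}{\sqrt p}$, then normalising by $A^\ast(K)\asymp K$, summing over $k\asymp K$, and using partial summation against $\hphi$ (which is supported in $(-\sigma,\sigma)$, so the prime sum runs only up to $R^\sigma$), we get $S_{1,{\rm unwt}}^\infty(\phi)\ll K^{\gep}Y^{-1/2}$.

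\emph{Step 3 (the main term).} Apply the Petersson formula (Appendix~\ref{sec:PeterssonFormula}) to $\Delta_{k,1}(m^2,p)$ in $S_{1,{\rm unwt}}'(\phi)$; the diagonal $\delta_{m^2,p}$ vanishes since $p$ is prime and $m^2\neq p$, leaving a Kloosterman--Bessel sum. Move the $k$-sum inside: after the factor $\tfrac{24}{k-1}$ combines with the $\tfrac{k-1}{12}$ produced by the weight-removal to a constant, the $k$-sum is $I(x)=\sum_{k\equiv0\bmod2}2i^kh(\tfrac{k-1}{K})J_{k-1}(x)$ with $x=\tfrac{4\pi m\sqrt p}{c}$ (the leftover $O(k^{-1/3})$ discrepancy from $\tfrac{k-1}{12}/|H_k^\ast(1)|=1+O(k^{-1/3})$ is absorbed into a negligible error, handled with $|J_{k-1}|\le1$). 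By Corollary~8.2 of \cite{ILS}, $I(x)\ll xK^{-4}$ provided $x\ll K^{2-\gep}$; since $m\le Y$, $p\le R^\sigma=K^{2\sigma}$ and $c\ge1$, this holds once $Y\ll K^{2-\sigma-\gep}$, which the choice below respects. Bounding the rest trivially --- $S(m^2,p;c)\ll c^{1/2+\gep}$ (Weil), the ensuing $\sum_c c^{-5/2+\gep}$ converges, $\sum_{p\le R^\sigma}\tfrac{\log p}{\log R}\ll R^{\sigma}/\log R$ (the $\sqrt p$ from $x$ cancelling the $1/\sqrt p$ from the weight), and $\sum_{m\le Y}\tfrac1m\cdot m\ll Y^{1+\gep}$ (the linear $m$ from $x$) --- gives $S_{1,{\rm unwt}}'(\phi)\ll Y$ times a negative power of $K$. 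It is precisely here that we must route the estimate through the $k$-averaging rather than through the sharper per-modulus bound of Lemma~\ref{lem:s1phiksum}, since this lemma does not assume GRH for Dirichlet $L$-functions. Combining Steps 2 and 3 and taking $Y=K^{(2-\sigma)/3}$ (which satisfies $Y\ll K^{2-\sigma-\gep}$) yields $S_{1,{\rm unwt}}(\phi)\ll K^{-(2-\sigma)/6+\gep}$, matching the shape of Lemma~\ref{lem:S1phiNinfty}.

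The main obstacle is the double role played by $Y$ in Step 3: it must be taken large enough that the tail contribution $Y^{-1/2}$ of Step 2 is acceptable, yet small enough that every argument $x=4\pi m\sqrt p/c$ stays in the range $x\ll K^{2-\gep}$ where the Bessel-oscillation estimate of \cite{ILS} is valid. A secondary technical point is making the smooth $k$-weight coming from $\tfrac{24}{k-1}h(\tfrac{k-1}{K})$ line up exactly with the function appearing in Corollary~8.2, and verifying that the non-smooth correction $O(k^{-1/3})$ from $|H_k^\ast(1)|^{-1}$ contributes only a lower-order term.
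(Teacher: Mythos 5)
Your proposal is correct and follows essentially the same route as the paper: remove the harmonic weights via the Iwaniec--Luo--Sarnak decomposition with the $X$-parameter trivial and $Y=K^{(2-\sigma)/3}$, bound the complementary sum by $K^{\gep}Y^{-1/2}$ using their Lemma 2.12 under GRH for $L(s,f)$, and control the truncated piece by averaging over $k$ through their Corollary 8.2 (with the constraint $x\ll K^{2-\gep}$), then balance the two errors. The only cosmetic difference is that you carry out the post--Corollary-8.2 bookkeeping explicitly (Weil bound plus a convergent $c$-sum) where the paper simply cites equation (8.11) of \cite{ILS}; both yield a main-piece bound dominated by the tail term, so the conclusion $S_{1,{\rm unwt}}(\phi)\ll K^{-(2-\sigma)/6+\gep}$ is the same.
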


\begin{proof} As the proof is similar to Lemma \ref{lem:S1phiNinfty}, we merely highlight the differences. Following \cite{ILS} (Section 8), we average over the weights as follows. Let $h$ be a Schwartz function compactly supported on $(0,\infty)$. We consider the weighted $1$-level density \bea \mathcal{A}^\ast(K;\phi) & \ = \ & \frac1{A^\ast(K)} \sum_{k\equiv 0 \bmod 2} \frac{24}{k-1} h\left(\frac{k-1}{K}\right) \sum_{f\in H_k^\ast(1)} D_{1,H_k^\ast(1);k^2}(\phi), \eea where \bea A^\ast(K) \ = \ \sum_{k\equiv 0 \bmod 2} \frac{24}{k-1} h\left(\frac{k-1}{K}\right) \left|H_k^\ast(1)\right| \ = \ \widehat{h}(0) K + O(K^{2/3}). \eea

The pieces whose errors cannot be trivially added arises from $S_{i,{\rm unwt}}(\phi)$ ($i \in \{1,2,3\}$) for each $k$. We analyze the weighted average of $S_1(\phi)$ below, and then study the other two in \S\ref{sec:analyzingunwts2s3phi}. The main idea is to exploit the oscillation in the Bessel functions as $k$ varies.

In their Lemma 2.12 we now take $X=1$ and $Y = K^\delta$. The complementary sum gives an error bounded by $k^\gep Y^{-1/2}$. The averaging over $k$ allows us to exploit the oscillation in the Bessel functions; this is the reason we are able to double the support. The main input is their Corollary 8.2, which says \bea I(x) \ = \ \sum_{k \equiv 0 \bmod 2} 2i^k h\left(\frac{k-1}{K}\right) J_{k-1}(x) \ \ll \ x K^{-4}, \eea where $x = 4\pi m \sqrt{P}/c$ and $P = R^\sigma = K^{2\sigma}$. Corollary 8.2 requires $x \ll K^{2-\gep}$. In their arguments they take $Y = K^\gep$, and thus for them $m\le K^\gep$ (recall $m \le Y$). As we are interested in sharper error estimates, we must take $Y$ a small power of $K$. This leads to a slight reduction in the support (our condition on $x$ forces $\sigma < 2 - \delta$). The proof is completed by feeding in the estimate from their equation (8.11), which yields a bound of $K^{\sigma+\delta+\gep-2}$ for the term from the non-complementary piece (remember we already executed the summation over $k$ when we bounded $I(x)$).

Thus the total error from the sum over $k$ of the $S_{1,{\rm unwt}}(\phi)$ terms is $O(K^\gep Y^{-1/2} + K^{\sigma+\delta+\gep-2})$. Equalizing the errors yields $\delta = (2-\sigma)/3$, or the total error from the weighted $S_{1,{\rm unwt}}(\phi)$ terms is $O(K^{-(2-\sigma)/6})$.
\end{proof}

\begin{rek} There is a mistake right before equation (8.10) in \cite{ILS}; see Remark \ref{rek:errorILS810}. \end{rek}

\subsection{Analyzing the unweighted $S_2(\phi)$ and $S_3(\phi)$}\label{sec:analyzingunwts2s3phi}

We now modify our investigation of $S_2(\phi)$ and $S_3(\phi)$ and remove the weights. We set \bea S_{3, {\rm unwt}}(\phi) & \ = \ & \frac{2}{|H_k^\ast(N)|} \sum_{f \in
H_k^\ast(N)} \sum_{p\neq N} \sum_{\nu = 3}^\infty
\frac{\lambda_f(p^\nu) - \lambda_f(p^{\nu-2})}{p^{\nu/2}}\
\hphi\left(\nu\frac{\log p}{\log R}\right) \frac{\log p}{\log
R}\nonumber\\  S_{2, {\rm unwt}}(\phi) & \ = \ & \frac{2}{|H_k^\ast(N)|} \sum_{f \in H_k^\ast(N)} \sum_{p\neq N} \frac{\gl_f(p^2)}{p} \hphi\left(2\frac{\log p}{\log R}\right) \frac{\log p}{\log R}. \eea

We argue as in the analysis of $S_{1, {\rm unwt}}(\phi)$. As the two terms are handled analogously, we concentrate on $S_{3, {\rm unwt}}(\phi)$. The analysis is significantly easier than the analysis of $S_{1, {\rm unwt}}(\phi)$ due to the higher power of primes (both in dividing by larger quantities and restricting further the summation over primes). Let $v = \nu$ or $\nu-2$. We must study the pure sums \be \Delta_{k,N}^\ast(p^v) \ = \ \sum_{f\in H_k^\ast(N)} \gl_f(p^v). \ee From Proposition 2.13 of \cite{ILS} we have \be\label{eq:unweightedDeltaknastn} \Delta_{k,N}^\ast(n) \ = \ \frac{(k-1)\varphi(N)}{12\sqrt{n}}\ \delta_{n,\Box} + O\left(\frac{(kN)^{2/3} n^{1/6}}{\sqrt{(n,N)}}\right) \ee where the main term is present only if $n$ is a square and $(n,N) = 1$. The contribution from the error term to $S_{3, {\rm unwt}}(\phi)$ is bounded by \be \sum_{\nu \le \log_2 R} \sum_{p \le R^{\sigma/3}} \frac{p^{3/6}}{p^{3/2}} \frac{(kN)^{2/3}}{kN} \ \ll \ \frac{\log^2 R}{(kN)^{1/3}}. \ee Thus the error term yields a negligible contribution.

The main term from Proposition 2.13, however, is a different story. Whenever $\nu$ is even it \emph{will} contribute, and yields \be\label{eq:lowerordertermnotseeninratiosconj} \sum_{\nu \equiv 0 \bmod 2 \atop \nu \ge 4} \sum_{p \neq N} \frac{1-p}{p^{\nu}}\ \hphi\left(\nu\frac{\log p}{\log R}\right) \frac{\log p}{\log R}. \ee
The unweighted $S_2(\phi)$ term will also contribute, as it involves $\gl_f(p^2)$. It gives another secondary term of size $1/\log R$, as well as an error of size $O(N^{-(6-\sigma)/6+\gep})$. Substituting everything into \eqref{eq:NTexpansion1ldHknast} yields \bea D_{1,H_k^\ast(N);R}(\phi) & \ = \ & \frac1{\log R}
\int_{-\infty}^\infty \left(2 \log \frac{\sqrt{N}}{\pi} +
\psi\left(\frac14 + \frac{k\pm 1}4 +\frac{2\pi i t}{\log R}\right)
\right) \phi(t) dt \nonumber\\ & & \ \ \  +2 \sum_{\nu \equiv 0 \bmod 2 \atop \nu \ge 2} \sum_{p \neq N} \frac{p-1}{p^\nu} \hphi\left(\nu \frac{\log p}{\log R}\right)\frac{\log p}{\log R}\nonumber\\ & & \ \ \   + O\left(N^{-1/2} + N^{-(2-\sigma)/6+\gep}\right);\eea the sum starts at $\nu = 2$ and not $\nu = 4$ as we have incorporated both $S_{2, {\rm unwt}}(\phi)$ and the $\sum_p 1/p$ term in \eqref{eq:NTexpansion1ldHknast}. This completes the analysis of the number theory terms in Theorem \ref{thm:1ldnumbthunweighted}.

\subsection{Unweighted Ratios Prediction}

We sketch the derivation of the prediction for the unweighted 1-level density from the Ratios Conjecture, which completes the proof of Theorem \ref{thm:1ldnumbthunweighted}. We concentrate on the case $N\to\infty$ through the primes. As the analysis is similar to the weighted case, we just highlight the new terms.

The Ratios Conjecture recipe states we should replace averages over the family by the main term, throwing away the `small' error. The problem is that while $\sum_{f \in H_k^\ast(N)} \omega_f^\ast(N) \lambda_f(n)$ is small for $n \ge 2$, it is \emph{not} small for $n$ a perfect square if we drop the weights (see \eqref{eq:unweightedDeltaknastn}).

We highlight the changes to Theorem \ref{thm:ratiosconj1level1} from studying the unweighted family $H_k^\ast(N)$. The first change is in Lemma \ref{lem:ratiosconjRalphagamma}. Originally we had the first term of $R_{H_k^\ast(N)}(\alpha,\gamma)$ was \be \prod_{p} \left(1 -
\frac{1}{p^{1+\alpha+\gamma}} + \frac1{p^{1+2\gamma}}\right); \ee now, however, we shall see it is \be \prod_p \left(1 - \frac{p+1}{p} \frac{1}{p^{1+\alpha+\gamma}} + \frac1{p^{1+2\gamma}}\right) \cdot \left(1 - \frac1{p^{2+2\alpha}}\right)^{-1}. \ee We do not worry about the changes to the second term, as it leads to a contribution of size $O(1/N)$.

The proof follows from mirroring the calculation in Lemma \ref{lem:ratiosconjRalphagamma}. We again assume we may split the sum into a product over primes. We constantly use (from \eqref{eq:standardformslambdafp}) $\lambda_f(p) \lambda_f(p^\nu) = \lambda_f(p^{\nu-1}) + \lambda_f(p^{\nu+1})$. We average over the family by using \eqref{eq:unweightedDeltaknastn}; which says there is no main term unless we are evaluating at a square. Thus below we drop all terms involving $\gl_f(p)\gl_f(p^{2k})$ or $\gl_f(p^{2k+1})$, as these yield lower order terms. We also ignore the product over $p \ge x$, as those terms vanish when we complete the product by sending $x\to\infty$. Thus we have   \bea   \sum_{m \le x \atop h} \frac{\mu_f(h) \lambda_f(m)}{h^{\foh+\gamma} m^{\foh+\alpha}}= \prod_{p \le x} \left(1 - \frac{\lambda_f(p)}{p^{\foh+\gamma}} + \frac{1}{p^{1+2\gamma}} \right) \cdot \left(1 + \frac{\gl_f(p)}{p^{\foh+\alpha}} + \frac{\gl_f(p^2)}{p^{1+2\alpha}} + \cdots \right)\ \ \eea contributes \bea \prod_{p \le x} \left[ \left(1+\frac1{p^{1+2\gamma}}\right) \sum_{k=0}^\infty \frac{\gl_f(p^{2k})}{(p^{1+2\alpha})^k} - \frac1{p^{1+\ga+\gamma}} \sum_{k=0}^\infty \frac{\lambda_f(p^{2k}) + \lambda_f(p^{2k+2})}{(p^{1+2\ga})^k} \right]. \eea
We now average over the family and divide by the family's cardinality; this replaces $\gl_f(p^{2\ell})$ with $1/p^\ell$ (remember we ignore all error terms). Using the geometric series formula and completing the product, after some simple algebra we find a contribution of \be \prod_p \left(1  + \frac1{p^{1+2\gamma}} - \frac{p+1}{p} \frac1{p^{1+\ga+\gamma}} \right) \left(1 + \frac1{p^{2+2\alpha}}\right)^{-1}. \ee

For the Ratios Conjecture prediction, however, we need the derivative of this piece with respect to $\alpha$ when $\alpha = \gamma = r$. We must therefore modify Lemma \ref{lem:Rprimealphagammar} as well. This piece contributes to $R_{H_k^\ast(N)}'(r,r)$ a factor of \be \sum_p \frac{(p-1)\log p}{p^{2+2r}-1} \ = \ \sum_p (p-1)\log p \sum_{k=1}^\infty \frac{1}{p^{2k} p^{2rk}}; \ee this is very similar to what we previously had for $R_{H_k^\ast(N)}'(r,r)$, namely \be \sum_p \frac{\log p}{p^{1+2r}}. \ee

This change propagates to Lemma \ref{lem:ratiosconj1level1}, where instead of \be \int_{-\infty}^\infty g(t) \sum_p \frac{\log p}{p^{1+2it}}dt \ = \ \sum_p \hg\left(\frac{2\log p}{2\pi}\right) \ee we now have   \bea\ \int_{-\infty}^\infty g(t) \sum_p \sum_{k=1}^\infty \frac{(p-1)\log p }{p^{2k} p^{2itk}}dt  &\ = \ & \sum_p \sum_{k=1}^\infty \frac{(p-1)\log p}{p^{2k}} \int_{-\infty}^\infty g(t) e^{-2\pi i \frac{2kt \log p}{2\pi}} dt \nonumber\\ &\ = \ & \sum_p \sum_{k=1}^\infty \frac{(p-1)\log p}{p^{2k}} \hg\left(\frac{2k\log p}{2\pi}\right). \eea

Setting $g(t) = \phi\left(\frac{t\log R}{2\pi}\right)$ and collecting all the terms completes the proof of the Ratios Conjecture's prediction in Theorem \ref{thm:1ldnumbthunweighted}.

\appendix


\section{Petersson Formula}\label{sec:PeterssonFormula}

Below we record several useful variants of the Petersson formula. We define \be \Delta_{k,N}(m,n) \ = \ \sum_{f \in \mathcal{B}_k(N)} \omega_f(N) \lambda_f(m) \lambda_f(n). \ee We quote the following versions of the Petersson formula from \cite{ILS} (to match notations, note that $\sqrt{\omega_f(N)} \lambda_f(n) = \psi_f(n)$).

\begin{lem}[\cite{ILS}, Proposition 2.1]\label{lem:ils21petersson} We have \be \Delta_{k,N}(m,n) \ = \ \delta(m,n) + 2\pi i^k \sum_{c\equiv 0 \bmod N} \frac{S(m,n;c)}{c} J_{k-1}\left(\frac{4\pi\sqrt{mn}}{c}\right), \ee where $\delta(m,n)$ is the Kronecker symbol, \be S(m,n;c)\ =\ \sideset{}{^*}\sum_{d \bmod c} \exp\left(2\pi i\frac{md+n\overline{d}}{c}\right)\ee is the classical Kloosterman sum ($d\overline{d} \equiv 1 \bmod c$), and $J_{k-1}(x)$ is a Bessel function.
\end{lem}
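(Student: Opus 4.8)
The plan is to derive the formula from the theory of holomorphic Poincar\'e series, the classical route to the Petersson trace formula (see \cite{Iw}). Writing $e(x) := e^{2\pi i x}$ and letting $j(\gamma,z) = cz+d$ be the usual automorphy factor for $\gamma = \left(\begin{array}{cc} a & b \\ c & d \end{array}\right) \in \Gamma_0(N)$, I would introduce for each integer $m \ge 1$ the weight $k$, index $m$ Poincar\'e series for $\Gamma_0(N)$ with trivial nebentypus,
\be
P_m(z) \ = \ \sum_{\gamma \in \Gamma_\infty \backslash \Gamma_0(N)} j(\gamma,z)^{-k}\, e\!\left(m\,\gamma z\right),
\ee
which for $k \ge 3$ converges absolutely and locally uniformly to an element of $S_k(N)$ (the borderline case $k=2$ is handled by Hecke's analytic continuation trick, as in \cite{ILS}). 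The strategy is to compute the $n$-th Fourier coefficient $a_{P_m}(n)$ of $P_m$ in two ways and equate the results.

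First I would do the spectral computation. Unfolding the Petersson inner product $(P_m,f)_N$ against the $\Gamma_\infty$-coset sum (the Rankin--Selberg trick) gives, for any $f \in S_k(N)$ with $f(z) = \sum_{n \ge 1} a_f(n)e(nz)$,
\be
(P_m,f)_N \ = \ \frac{\Gamma(k-1)}{(4\pi m)^{k-1}}\, \overline{a_f(m)} .
\ee
In particular $(P_m,f)_N = 0$ for all $m$ forces $f \equiv 0$, so $\{P_m\}_{m \ge 1}$ spans $S_k(N)$; expanding $P_m$ in an orthogonal basis $\mathcal{B}_k(N)$ chosen with real coefficients and reading off the $n$-th Fourier coefficient, then using $\lambda_f(n) = a_f(n) n^{-(k-1)/2}$ and $\omega_f(N) = \Gamma(k-1)/\bigl((4\pi)^{k-1}(f,f)_N\bigr)$, I obtain
\be
a_{P_m}(n) \ = \ \frac{\Gamma(k-1)}{(4\pi m)^{k-1}}\sum_{f \in \mathcal{B}_k(N)} \frac{\overline{a_f(m)}\,a_f(n)}{(f,f)_N} \ = \ \left(\frac{n}{m}\right)^{\frac{k-1}{2}} \Delta_{k,N}(m,n).
\ee
Next I would extract $a_{P_m}(n)$ from the Fourier expansion of $P_m$ directly, via the coset decomposition of $\Gamma_\infty\backslash\Gamma_0(N)$: the identity coset contributes $\delta(m,n)$, and in the remaining cosets the lower-left entry $c$ is a nonzero multiple of $N$; summing over residues $d\bmod c$ with $(d,c)=1$ produces the Kloosterman sum $S(m,n;c)$, and the accompanying integral over $\R$ is a Bessel-type oscillatory integral, so these cosets contribute $2\pi i^k\,\frac{S(m,n;c)}{c}\,J_{k-1}\!\left(\frac{4\pi\sqrt{mn}}{c}\right)$, again up to the overall factor $(n/m)^{(k-1)/2}$. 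This gives
\be
a_{P_m}(n) \ = \ \left(\frac{n}{m}\right)^{\frac{k-1}{2}}\left[\,\delta(m,n) + 2\pi i^k \sum_{c \equiv 0 \bmod N} \frac{S(m,n;c)}{c}\, J_{k-1}\!\left(\frac{4\pi\sqrt{mn}}{c}\right)\right].
\ee
Equating the two expressions for $a_{P_m}(n)$ and cancelling the common factor $(n/m)^{(k-1)/2}$ yields the stated identity.

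The hard part will be the geometric side: justifying the term-by-term integration (absolute convergence of the coset sum for $k \ge 3$, Hecke's trick for $k=2$), organizing the coset decomposition so that the arithmetic sum is \emph{exactly} $S(m,n;c)$, and --- the real linchpin --- evaluating the resulting oscillatory integral as a $J$-Bessel function by reducing it, through a change of variables, to the integral representation $J_\nu(x) = \frac{(x/2)^\nu}{\sqrt{\pi}\,\Gamma(\nu + 1/2)}\int_{-1}^{1}(1-t^2)^{\nu - 1/2}\cos(xt)\,dt$. I would also confirm absolute convergence of the final $c$-sum: Weil's bound $S(m,n;c) \ll_\gep c^{1/2+\gep}(m,n,c)^{1/2}$ together with $J_{k-1}(x) \ll x^{k-1}$ as $x \to 0$ makes the tail $\ll \sum_c c^{-1/2-(k-1)+\gep}$, which converges for $k \ge 2$. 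Since the statement is quoted verbatim from \cite{ILS}, the paper itself simply cites Proposition 2.1 there; the above is the standard derivation behind that citation.
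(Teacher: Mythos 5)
The paper does not prove this lemma; it is quoted verbatim from Proposition 2.1 of \cite{ILS}, so there is no in-paper argument to compare against. Your Poincar\'e-series derivation is the standard proof behind that citation, and the skeleton is correct: the unfolding computation $(P_m,f)_N = \Gamma(k-1)(4\pi m)^{-(k-1)}\overline{a_f(m)}$ is right (note it is most cleanly obtained as the conjugate of $(f,P_m)_N$), the normalization bookkeeping converting $a_f$ to $\lambda_f$ and $(f,f)_N^{-1}$ to $\omega_f(N)$ correctly produces the factor $(n/m)^{(k-1)/2}\Delta_{k,N}(m,n)$, and the Bruhat-type decomposition of $\Gamma_\infty\backslash\Gamma_0(N)$ into the $c=0$ coset (giving $\delta(m,n)$) and the cosets with $N\mid c$, $c\neq 0$ (giving the Kloosterman--Bessel terms, with $i^{-k}=i^k$ since $k$ is even) is exactly how the geometric side goes. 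Two small caveats. First, the ``real coefficients'' hypothesis deserves a word: $\Delta_{k,N}(m,n)$ as defined in the paper has no conjugate, and \cite{ILS} write the spectral side as $\sum_f \overline{\psi_f(m)}\psi_f(n)$; one either chooses $\mathcal{B}_k(N)$ with real Fourier coefficients (possible since $S_k(N)$ has a rational structure preserved by real Gram--Schmidt) or carries the conjugate throughout, as you indicate. Second, the integral representation you name, $J_\nu(x) = \frac{(x/2)^\nu}{\sqrt{\pi}\,\Gamma(\nu+1/2)}\int_{-1}^1(1-t^2)^{\nu-1/2}\cos(xt)\,dt$, is not the instrument that most directly evaluates the oscillatory integral here; the standard route is to expand $e(-m/(c^2 z))$ in its power series, integrate term by term against $\int_{\R}(x+iy)^{-k-j}e(-nx)\,dx$ (a Gamma-function integral), and recognize the power series of $J_{k-1}$, or equivalently to invoke the Hankel contour representation. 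With that substitution, and with Hecke's trick for $k=2$ and the Weil bound for the absolute convergence of the $c$-sum as you describe, the sketch fills in to a complete proof.
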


We expect the main term to arise only in the case when $m=n$ (though as shown in \cite{HM,ILS}, the non-diagonal terms require a sophisticated analysis for test functions with sufficiently large support). We have the following estimates.

\begin{lem}[\cite{ILS}, Corollary 2.2]\label{lem:ilscor22} We have \be \Delta_{k,N}(m,n) \ = \ \delta(m,n) + O\left(\frac{\tau(N)}{Nk^{5/6}} \ \frac{(m,n,N) \tau_3((m,n))}{\sqrt{(m,N)+(n,N)}} \ \left(\frac{mn}{\sqrt{mn} + kN}\right)^{1/2} \log 2mn\right), \ee where $\tau_3(\ell)$ denotes the corresponding divisor function. \end{lem}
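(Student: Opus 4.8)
The plan is to deduce this from the exact Petersson formula of Lemma~\ref{lem:ils21petersson}. Writing
\be
\Delta_{k,N}(m,n) - \delta(m,n) \ = \ 2\pi i^k \sum_{c \equiv 0 \bmod N} \frac{S(m,n;c)}{c}\, J_{k-1}\!\left(\frac{4\pi\sqrt{mn}}{c}\right),
\ee
the whole problem becomes an absolute-value estimate for the Kloosterman--Bessel sum on the right. First I would insert Weil's bound $|S(m,n;c)| \ll \tau(c)\,(m,n,c)^{1/2}\,c^{1/2}$ (together with the sharper evaluation of $S(m,n;c)$ when $(m,c)$ or $(n,c)$ is large, which is what ultimately produces $\sqrt{(m,N)+(n,N)}$ in the denominator rather than a plain power of $(m,n,c)$), reducing matters to bounding $\sum_{c\equiv 0 \bmod N} \tau(c)\,(m,n,c)^{1/2}\, c^{-1/2}\,|J_{k-1}(4\pi\sqrt{mn}/c)|$.

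Next I would control the Bessel factor with two standard inputs: the uniform bound $J_{k-1}(x) \ll x^{-1/3}$, valid for all $x\ge 0$, and the essentially exponential decay of $J_{k-1}(x)$ in the range $x \ll k$ (from the power series, $J_{k-1}(x) \le (x/2)^{k-1}/\Gamma(k)$ for $x \le 2\sqrt{k}$, with the standard continuation of this decay up to $x$ a small multiple of $k$). The second fact is what the diagonal case $m=n$ exploits, and it truncates the $c$-sum, up to a negligible tail, to $N \le c \lesssim C := \sqrt{mn}/k$, on which $x = 4\pi\sqrt{mn}/c \gtrsim k$. On that range $|J_{k-1}(x)| \ll x^{-1/3} = (c/4\pi\sqrt{mn})^{1/3}$, so the summand is $\ll \tau(c)(m,n,c)^{1/2} (mn)^{-1/6} c^{-1/6}$, and summing over $c\equiv 0 \bmod N$ with $c\le C$ gives $\ll N^{-1} C^{5/6} (mn)^{-1/6}$ times divisor and logarithmic factors, since $\sum_{c\equiv 0\bmod N,\ c\le C} c^{-1/6} \asymp N^{-1}C^{5/6}$. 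Because $C^{5/6}(mn)^{-1/6} = (\sqrt{mn}/k)^{5/6}(mn)^{-1/6} = (mn)^{1/4}k^{-5/6}$, this is exactly the claimed $k^{-5/6}$ accompanied by a factor $(mn)^{1/4}$; and when $\sqrt{mn} \lesssim kN$ the truncation window is empty, so one is entirely in the (much smaller) rapidly decaying regime. Checking that $(mn)^{1/4}$ in the first regime and super-polynomial smallness in the second both lie under the single uniform expression $\big(mn/(\sqrt{mn}+kN)\big)^{1/2}$ finishes the estimate, and the factors $\tau(N)$, $(m,n,N)$, $\tau_3((m,n))$ and $\log 2mn$ all fall out of elementary divisor-sum bookkeeping for $\sum_{c}\tau(c)(m,n,c)^{1/2}c^{-1/6}$ once one separates the part of $(m,n,c)$ dividing $N$ from the part dividing $(m,n)$.

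The only non-mechanical part is the uniform handling of $J_{k-1}$: away from the transition point $x\approx k-1$ the power-series bound and the $x^{-1/3}$ bound are essentially lossless, but it is exactly the Airy regime around $x = k-1$ that blocks exponential decay and pins the exponent at $5/6 = 1-(\tfrac12-\tfrac13)$ --- the $\tfrac13$ being the uniform Bessel decay rate and the $\tfrac12$ coming from Weil's $c^{1/2}$ against the $c^{-1}$ in the Petersson normalization. Everything else is routine.
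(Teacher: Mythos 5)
The paper does not actually prove this lemma: it is quoted verbatim from Iwaniec--Luo--Sarnak (their Corollary 2.2), so there is no internal proof to compare against. Your sketch is a faithful reconstruction of the ILS argument --- Weil's bound together with the refined evaluation of $S(m,n;c)$ at $c=N$ when $N$ divides exactly one of $m,n$ (the source of $\sqrt{(m,N)+(n,N)}$), the uniform bound $J_{k-1}(x)\ll x^{-1/3}$ on the range $c\lesssim \sqrt{mn}/k$, and rapid decay beyond --- and the exponent bookkeeping checks out: with $C=\sqrt{mn}/k$ one has $N^{-1}C^{5/6}(mn)^{-1/6}=(mn)^{1/4}N^{-1}k^{-5/6}$, matching $\left(mn/(\sqrt{mn}+kN)\right)^{1/2}$ in the regime $\sqrt{mn}\gg kN$, with the other regime handled as you say. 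The one imprecision is the claim that for $\sqrt{mn}\lesssim kN$ one is ``entirely in the rapidly decaying regime'': when $kN\asymp\sqrt{mn}$ the first terms $c=N,2N,\dots$ have $x\asymp k$, i.e.\ they sit in the Bessel transition range where only $J_{k-1}(x)\ll x^{-1/3}\asymp k^{-1/3}$ is available rather than exponential decay, but since that bound reproduces the same order of magnitude this does not affect the final estimate.
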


We can significantly decrease the error term if $m$ and $n$ are small relative to $kN$.

\begin{lem}[\cite{ILS}, Corollary 2.3]\label{lem:ils23} If $12\pi\sqrt{mn} \le kN$ we have \be \Delta_{k,N}(m,n) \ = \ \delta(m,n) + O\left(\frac{\tau(N)}{2^k N^{3/2}} \ \frac{(m,n,N) \sqrt{mn}}{\sqrt{(m,N)+(n,N)}}\ \tau((m,n))\right). \ee
\end{lem}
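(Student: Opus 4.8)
The plan is to start from the exact Petersson formula of Lemma~\ref{lem:ils21petersson},
\be
\Delta_{k,N}(m,n) \ = \ \delta(m,n) \ + \ 2\pi i^k \sum_{c\equiv 0 \bmod N} \frac{S(m,n;c)}{c}\, J_{k-1}\!\left(\frac{4\pi\sqrt{mn}}{c}\right),
\ee
so that the Kronecker term is already in place and it remains only to bound the Bessel--Kloosterman sum by the stated error. The decisive point is that the hypothesis $12\pi\sqrt{mn}\le kN$, together with $c\ge N$ for every $c$ in the sum, pushes each Bessel argument into the ``rising'' regime of $J_{k-1}$: indeed $3\cdot\frac{4\pi\sqrt{mn}}{c}\le\frac{12\pi\sqrt{mn}}{N}\le k$, so $J_{k-1}$ is evaluated only at points $z$ with $3z\le k$, where it is exponentially small in $k$ rather than merely polynomially small.

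First I would insert the small-argument Bessel bound: from the power series together with Stirling's formula one obtains $J_{k-1}(z)\ll z\,2^{-k}$ whenever $3z\le k$. It is exactly this exponential gain --- over the polynomial decay of $J_{k-1}$ used to prove the $k^{-5/6}$ error of Lemma~\ref{lem:ilscor22} --- that produces the $2^{-k}$ in the statement. Next I would apply the Weil bound $|S(m,n;c)|\ll\tau(c)\,(m,n,c)^{1/2}c^{1/2}$ to each summand; writing $c=N\ell$ with $\ell\ge1$, a typical term is then
\be
\frac{|S(m,n;N\ell)|}{N\ell}\, \left|J_{k-1}\!\left(\frac{4\pi\sqrt{mn}}{N\ell}\right)\right|\ \ll\ \frac{\tau(N\ell)\,(m,n,N\ell)^{1/2}}{(N\ell)^{1/2}}\cdot\frac{\sqrt{mn}}{N\ell}\cdot 2^{-k}.
\ee
The $\ell$-sum converges geometrically (the factor $\ell^{-3/2}$ alone dominates the slow growth of $\tau(N\ell)$ and $(m,n,N\ell)^{1/2}$, after splitting the latter according to the divisor $d=(m,n,\ell)$), so the whole sum is controlled by its $\ell=1$ term. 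This already gives an error of essentially the right shape, namely $O\!\left(\tau(N)(m,n,N)^{1/2}\sqrt{mn}\,2^{-k}N^{-3/2}\right)$, and --- unlike Lemma~\ref{lem:ilscor22} --- with no extraneous logarithm, precisely because here the $c$-sum is genuinely geometric rather than effectively of length $\sqrt{mn}$.

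The one step requiring more than this crude estimate is the sharp arithmetic factor $(m,n,N)\,\tau((m,n))/\sqrt{(m,N)+(n,N)}$ in place of the coarse $(m,n,N)^{1/2}$. To get it one must open $S(m,n;N\ell)$ using the twisted multiplicativity of Kloosterman sums, separating the $N$-aspect from the $\ell$-aspect: the $N$-part yields the refined dependence on $(m,N)$ and $(n,N)$ --- in particular the extra cancellation encoded by the $\sqrt{(m,N)+(n,N)}$ in the denominator --- while the $\ell$-part, summed against the rapidly decaying Bessel factor, contributes the $\tau((m,n))$ after a routine divisor-sum bound. This delicate Kloosterman bookkeeping is the main obstacle; everything else (Bessel decay in the small-argument range, geometric summation in $\ell$) is routine, and for the applications in this paper (where $(m,n,N)=(m,N)=(n,N)=1$ in all uses) even the crude version above suffices. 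Since the sharp estimate is exactly Corollary~2.3 of \cite{ILS}, we content ourselves with invoking it.
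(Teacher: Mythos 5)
The paper offers no proof of this lemma at all: it is simply quoted verbatim as Corollary 2.3 of \cite{ILS}, exactly as your final sentence does. Your sketch is a correct reconstruction of the ILS argument (Petersson formula, the small-argument bound $J_{k-1}(z)\ll z\,2^{-k}$ for $3z\le k$ forced by $c\ge N$ and $12\pi\sqrt{mn}\le kN$, Weil plus the $c=N\ell$ summation), and you rightly isolate the refined arithmetic factor as the only part needing the full Kloosterman bookkeeping from \cite{ILS}; since you and the paper both ultimately rest on that citation, there is nothing to correct.
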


In this paper we consider two cases, $N=1$ and $k\to\infty$ or $k$ fixed and $N \to \infty$ through prime values. In the first case, there is no problem with using the above formulas; however, in the second case we must be careful. $\Delta_{k,N}(m,n)$ is defined as a sum over all cusp forms of weight $k$ and level $N$; in practice we often study the families $H_k^\sigma(N)$ of cuspidal newforms of weight $k$ and level $N$ (if $\sigma = +$ we mean the subset with even functional equation, if $\sigma = -$ we mean the subset with odd functional equation, and if $\sigma = \ast$ we mean all). Thus we should remove the contribution from the oldforms in our Petersson expansions. Fortunately this is quite easy if $N$ is prime, as then the only oldforms are those of level 1 (following \cite{ILS}, with additional work we can readily handle $N$ square-free). We have (see (1.16) of \cite{ILS}) \be |H_k^\pm(N)| \ \sim \ \frac{k-1}{24} \ \varphi(N), \ee where $\varphi(N)$ is Euler's totient function (and thus equals $N-1$ for $N$ prime). The number of cusp forms of weight $k$ and level $1$ is (see (1.15) of \cite{ILS}) approximately $k/12$. As $\lambda_f(n) \ll \tau(n) \ll n^\gep$ and $\omega_f^\ast(N) \ll N^{-1+\gep}$, we immediately deduce

\begin{lem}\label{lem:Peterssonjustnewforms} Let $\mathcal{B}^{\rm new}_k(N)$ be a basis for $H_k^\ast(N)$ and let $\omega_f^\ast(N)$ be as in \eqref{eq:omegaastfN}. For $N$ prime, we have \bea \sum_{f \in \mathcal{B}^{\rm new}_k(N)} \omega_f^\ast(N) \lambda_f(m)\lambda_f(n) & \ = \ & \Delta_{k,N}(m,n) + O\left(\frac{(mnN)^\gep k}{N}\right). \eea Substituting yields \bea & & \sum_{f \in \mathcal{B}^{\rm new}_k(N)} \omega_f^\ast(N) \lambda_f(m)\lambda_f(n) \ = \ \delta(m,n) + O\left(\frac{(mnN)^\gep k}{N}\right)  \nonumber\\ & & \ \ \ \ \ \ + \ O\left(\frac{\tau(N)}{Nk^{5/6}} \ \frac{(m,n,N) \tau_3((m,n))}{\sqrt{(m,N)+(n,N)}} \ \left(\frac{mn}{\sqrt{mn} + kN}\right)^{1/2} \log 2mn\right),\ \ \ \ \ \eea while if $12\pi\sqrt{mn} \le kN$ we have \bea & & \sum_{f \in \mathcal{B}^{\rm new}_k(N)} \omega_f^\ast(N) \lambda_f(m)\lambda_f(n) \ = \ \delta(m,n)  \nonumber\\ & & \ \ \ \ \ \ + \ O\left(\frac{\tau(N)}{2^k N^{3/2}} \ \frac{(m,n,N) \sqrt{mn}}{\sqrt{(m,N)+(n,N)}}\ \tau((m,n))\right) + O\left(\frac{(mnN)^\gep k}{N}\right). \ \ \ \ \eea
\end{lem}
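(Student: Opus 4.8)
The plan is to derive everything from the full-basis Petersson formula (Lemma~\ref{lem:ils21petersson}) by peeling off the oldform contribution, which is negligible once $N$ is prime. First I would fix the decomposition $S_k(N) = S_k^{\rm new}(N) \oplus S_k^{\rm old}(N)$: since $N$ is prime the only oldforms come from level $1$, so $S_k^{\rm old}(N)$ is spanned by $\{\,g(z),\ g(Nz) : g \in H_k^\ast(1)\,\}$ and hence has dimension $2|H_k^\ast(1)| = O(k)$ (see (1.15) of \cite{ILS}). Choose the orthogonal basis $\mathcal{B}_k(N)$ of $S_k(N)$ so that it contains the newform basis $\mathcal{B}^{\rm new}_k(N)$ together with an orthogonal basis of this $O(k)$-dimensional oldform space (the Gram matrix of $g(z),g(Nz)$ is well conditioned, so the Gram--Schmidt coefficients are $O(1)$ and the resulting basis vectors still have normalized Fourier coefficients $\ll_\gep \ell^\gep$). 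With this choice, $\Delta_{k,N}(m,n) = \sum_{f \in \mathcal{B}_k(N)} \omega_f(N)\lambda_f(m)\lambda_f(n)$ splits as the sum over $\mathcal{B}^{\rm new}_k(N)$ plus a sum over the $O(k)$ oldform basis vectors.

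Next I would bound the oldform part. Each oldform basis vector $f$ is built from a level-$1$ form, so its Petersson norm at level $N$ picks up the index $[\Gamma_0(1):\Gamma_0(N)] = N+1$; hence $\omega_f(N) \ll_\gep N^{-1+\gep}$ (exactly the bound recorded in \S\ref{sec:weights}), and its normalized Fourier coefficients satisfy $|\lambda_f(\ell)| \ll_\gep \ell^\gep$ by the standard bound (with the obvious support restriction for the $g(Nz)$ piece, which does not affect the estimate). Summing $O(k)$ such terms gives an oldform contribution of size $O\!\left((mnN)^\gep k/N\right)$, so
\[
  \sum_{f \in \mathcal{B}^{\rm new}_k(N)} \omega_f(N)\lambda_f(m)\lambda_f(n) \ = \ \Delta_{k,N}(m,n) + O\!\left(\frac{(mnN)^\gep k}{N}\right).
\]

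Then I would pass from $\omega_f(N)$ to $\omega_f^\ast(N) = \omega_f(N)/\omega(N)$. By the discussion in \S\ref{sec:weights} (see \eqref{eq:omegaastfN}--\eqref{eq:omegaastfNb}) one has $\omega(N) = \sum_{f \in H_k^\ast(N)} \omega_f(N) = 1 + O(k N^{-1+\gep})$, since removing the $O(k)$ oldforms from $\sum_{f\in\mathcal{B}_k(N)}\omega_f(N) = 1 + O(N^{-1+\gep})$ changes it by at most that much. Dividing the previous display by $\omega(N)$ therefore costs an additional error $O\!\left(kN^{-1+\gep}|\Delta_{k,N}(m,n)|\right)$, and since $\Delta_{k,N}(m,n) \ll_\gep (mn)^\gep$ for the relevant ranges of $m,n$ (trivially from Lemma~\ref{lem:ilscor22}), this too is $O\!\left((mnN)^\gep k/N\right)$. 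This yields the first displayed identity of the lemma. The two ``substituting yields'' formulas then follow immediately by inserting the evaluations of $\Delta_{k,N}(m,n)$ from Corollary~2.2 of \cite{ILS} (Lemma~\ref{lem:ilscor22}) and, when $12\pi\sqrt{mn}\le kN$, from Corollary~2.3 of \cite{ILS} (Lemma~\ref{lem:ils23}); the new $O((mnN)^\gep k/N)$ error is simply carried alongside theirs.

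The only step needing genuine (though routine) care is the control of the oldform contribution: one must check that the orthogonalization of $g(z)$ and $g(Nz)$ does not inflate the normalized Fourier coefficients or the Petersson weights, and that the coefficients of $g(Nz)$ — supported on multiples of $N$ — still obey $\ll_\gep \ell^\gep$. Everything else is bookkeeping: the oldform space is $O(k)$-dimensional, each contributing basis vector has weight $\ll_\gep N^{-1+\gep}$, and the normalization factor $\omega(N)$ equals $1 + O(kN^{-1+\gep})$, so all corrections collapse to the claimed error $O\!\left((mnN)^\gep k/N\right)$.
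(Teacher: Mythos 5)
Your proposal is correct and follows essentially the same route as the paper: the paper's (one-line) proof likewise removes the $O(k)$ oldform basis vectors using $\omega_f(N) \ll N^{-1+\gep}$ and $\lambda_f(\ell) \ll \ell^\gep$, renormalizes via \eqref{eq:omegaastfN}--\eqref{eq:omegaastfNb} with $\omega(N) = 1 + O(kN^{-1+\gep})$, and then inserts Lemmas \ref{lem:ilscor22} and \ref{lem:ils23}. You simply spell out the details (the $g(z), g(Nz)$ orthogonalization and the trivial bound on $\Delta_{k,N}(m,n)$) that the paper treats as immediate.
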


\begin{proof} The proof follows by using equations \eqref{eq:omegaastfN} and \eqref{eq:omegaastfNb} in the Petersson lemmas. \end{proof}


\section{Useful estimates}\label{sec:usefulestimates}

\begin{lem}\label{lem:xlgammafactors} Consider $\left|X_L\left(\frac{1+w}2+\frac{2\pi i t}{\log R}\right)\right|$. If $w=0$ it is $O(1)$, while if $w=\frac{2k-1}3$ it is $O\left(\left(\frac{2009}{\sqrt{N}}\right)^{\frac{2k-1}3} \cdot k^{-k/3}\right)$. \end{lem}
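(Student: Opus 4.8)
The plan is to substitute $s = \frac{1+w}{2} + \frac{2\pi i t}{\log R}$ into the second expression for $X_L(s)$ in \eqref{eq:expansionsXLs}, namely $X_L(s) = \left(\frac{\sqrt N}{2\pi}\right)^{1-2s}\Gamma\!\left(\frac{1-s}{2}+\frac{k-1}{2}\right)\!\big/\Gamma\!\left(\frac{s}{2}+\frac{k-1}{2}\right)$, and take absolute values. Since $\Re(1-2s) = -w$, the leading power contributes exactly $\left(\frac{2\pi}{\sqrt N}\right)^{w}$ in modulus, and the two Gamma arguments simplify to $\frac{1-w}{4}+\frac{k-1}{2}\mp\frac{\pi i t}{\log R}$ (numerator) and $\frac{1+w}{4}+\frac{k-1}{2}\pm\frac{\pi i t}{\log R}$ (denominator).

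For $w = 0$ the two Gamma arguments are the complex conjugates $\frac14+\frac{k-1}{2}\mp\frac{\pi i t}{\log R}$, so, as $\Gamma$ has real Taylor coefficients, their ratio has modulus $1$; the $\sqrt N$-power also has modulus $1$. Hence $|X_L| = 1 = O(1)$, with no error term whatsoever.

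For $w = \frac{2k-1}{3}$ a direct computation gives $\frac{1-w}{4}+\frac{k-1}{2} = \frac{2k-1}{6}$ and $\frac{1+w}{4}+\frac{k-1}{2} = \frac{2k-1}{3}$, so, writing $a := \frac{2k-1}{6}$ and $y := \frac{\pi t}{\log R}$, I must bound $\left(\frac{2\pi}{\sqrt N}\right)^{(2k-1)/3}\big|\Gamma(a-iy)\big|\big/\big|\Gamma(2a+iy)\big|$. I would first use $\overline{\Gamma(a+iy)} = \Gamma(a-iy)$ to replace $|\Gamma(a-iy)|$ by $|\Gamma(a+iy)|$, and then invoke the Beta integral $\Gamma(a+iy)/\Gamma(2a+iy) = \frac1{\Gamma(a)}\int_0^1 u^{a+iy-1}(1-u)^{a-1}\,du$ to get the bound $\big|\Gamma(a+iy)/\Gamma(2a+iy)\big| \le \frac1{\Gamma(a)}\int_0^1 u^{a-1}(1-u)^{a-1}\,du = B(a,a)/\Gamma(a) = \Gamma(a)/\Gamma(2a)$, which is uniform in $y$ (hence in $t$ and in $R$). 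This reduces matters to the real ratio $\Gamma(a)/\Gamma(2a)$: by the Legendre duplication formula $\Gamma(a)/\Gamma(2a) = \sqrt\pi\,2^{1-2a}/\Gamma(a+\frac12)$, and Stirling gives $\Gamma(a)/\Gamma(2a) \ll 4^{-a}a^{-a}e^{a}\,a^{O(1)}$. Inserting $a = \frac{2k-1}{6}$ and simplifying the exponents (so $a^{-a} = k^{-k/3}\,3^{k/3}\,k^{O(1)}$, $e^a\asymp e^{k/3}$, $4^{-a}\asymp 4^{-k/3}$) yields $\Gamma(a)/\Gamma(2a) \ll (3e/4)^{k/3}\,k^{-k/3}\,k^{O(1)}$, hence $|X_L| \ll \left(\frac{2\pi}{\sqrt N}\right)^{(2k-1)/3}(3e/4)^{k/3}\,k^{-k/3}\,k^{O(1)}$. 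Since $(2009/2\pi)^{2/3}\approx 47$ while $3e/4\approx 2$, the factor $(3e/4)^{k/3}k^{O(1)}$ is swallowed by $(2009/2\pi)^{(2k-1)/3}$ for $k$ large, giving the claimed $O\!\left((2009/\sqrt N)^{(2k-1)/3}k^{-k/3}\right)$.

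There is no serious obstacle; the one step worth isolating is the Beta-integral estimate, which simultaneously collapses the complex Gamma ratio to a real one and renders the bound uniform in $t$ (and so in $R$), after which only a routine Stirling expansion remains. The sole point needing a little care is tracking the implied constant in the Stirling step, but since the comparison of $3e/4$ with $(2009/2\pi)^{2/3}$ leaves exponential room, any fixed polynomial or bounded multiplicative loss is harmless, so the constant $2009$ is comfortably adequate.
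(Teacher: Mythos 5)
Your proposal is correct and follows essentially the same route as the paper: the same substitution into the second form of \eqref{eq:expansionsXLs}, the conjugation trick $|\Gamma(x+iy)|=|\Gamma(x-iy)|$, the Beta-integral identity \eqref{eq:betaidentity} with $a=b-a=\frac{2k-1}{6}$, and a Stirling estimate to finish. The only cosmetic difference is that you evaluate the Beta integral exactly as $B(a,a)=\Gamma(a)^2/\Gamma(2a)$ and then apply the duplication formula, whereas the paper bounds the integrand by its maximum at $t=1/2$ to get $4\cdot 4^{-a}/\Gamma(a)$; both yield the same $4^{-a}a^{-a}e^{a}$ decay up to polynomial factors, comfortably absorbed by the constant $2009$.
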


\begin{proof} We have \bea X_L\left(\frac{1+w}2+\frac{2\pi i t}{\log
R}\right) \ = \ \left(\frac{\sqrt{N}}{2\pi}\right)^{-w-\frac{4\pi i
t}{\log R}} \frac{\Gamma\left(\frac{1-w}4+\frac{k-1}2-\frac{\pi i
t}{\log R}\right)}{\Gamma\left(\frac{1+w}4+\frac{k-1}2+\frac{\pi i
t}{\log R}\right)}. \eea The claim follows from analyzing the
ratio of the Gamma factors. As $|\Gamma(x+iy)| =
|\Gamma(x-iy)|$ we may replace $-\pi i t/\log R$ with $+\pi i t /
\log R$ in the Gamma function in the numerator. The proof is thus trivial for $w=0$. If $w>0$ then we use the identity \be\label{eq:betaidentity} \frac{\Gamma(a+iy)\Gamma(b-a)}{\Gamma(b+iy)}
\ = \ \int_0^1 t^{a+iy-1}(1-t)^{b-a-1}dt. \ee Note
that if $a = b$ then our bound is poor due to the presence of the
$\Gamma(b-a)$ term; however, for us that would correspond to $w=0$,
and in that case the ratio of the Gamma factors is just $1$.

We apply \eqref{eq:betaidentity} with $a = \frac{1-w}4+\frac{k-1}2$ and $b-a = \frac{w}2$. We take $w = \frac{2k-1}3$ (chosen so that $b-a = a = \frac{2k-1}6$). We want $w < 2k-1$ as in our applications we will be shifting contours, and we want to avoid the pole of the numerator. The ratio of the Gamma factors, when $w=\frac{2k-1}3$, is \bea \left| \frac{\Gamma\left(\frac{2k-1}6+\frac{\pi i t}{\log R}\right)}{\Gamma\left(\frac{2k-1}3+\frac{\pi i t}{\log R}\right)}\right| \ \le \ \frac1{\Gamma\left(\frac{2k-1}6\right)} \int_0^1 \left(t \cdot (1-t)\right)^{\frac{2k-1}6-1} dt \ \le \ \frac{4\left(1/4\right)^{\frac{2k-1}6} }{\Gamma\left(\frac{2k-1}6\right)}  \eea (as the integrand is largest when $t=1/2$). Thus for $w=\frac{2k-1}6$, applying Stirling's formula to $\Gamma\left(\frac{2k-1}{6}\right)$ we find \be \left|X_L\left(\frac{k+1}3+\frac{2\pi i t}{\log R}\right)\right| \ \le \ \left(\frac{\pi}{\sqrt{N}}\right)^{\frac{2k-1}3} \frac4{\Gamma\left(\frac{2k-1}6\right)} \ \le \ \left(\frac{2009}{\sqrt{N}}\right)^{\frac{2k-1}3} \cdot k^{-k/3}. \ee \end{proof}

\begin{rek}\label{rek:applyingHolder} In the hope that this might be useful to other researchers on related problems, we sketch an alternative attack\footnote{We shall use H$\ddot{{\rm o}}$lder's inequality. See \cite{HM} for another application of H$\ddot{{\rm o}}$lder's inequality to bounding error terms in $n$-level computations.} for estimating $X_L$ in Lemma \ref{lem:xlgammafactors}. Unfortunately for our applications it also requires $\supp(\hphi) \subset (-1/4, 1/4)$. In the proof above $w$ was a function of $k$; this was deadly as we had a factor of $R^{\sigma w} = k^{2\sigma w}$ from $\hphi$ from the contour shift (see \eqref{eq:boundphicontourshiftw}). This forced us to take $\sigma < 1/4$, as our denominator was (essentially) $k^{w/2}$. We sketch an alternative approach using H$\ddot{{\rm o}}$lder's inequality; unfortunately this method also forces $\sigma < 1/4$ (and gives a worse error term).

We apply \eqref{eq:betaidentity} with $a = \frac{1-w}4+\frac{k-1}2$ and $b-a = \frac{w}2$. We choose $w = 1/8$ for definiteness and ease of exposition; similar results hold for all $w$, always requiring $\sigma < 1/4$. For such $w$, we have $b-a-1 < 0$; thus the factor of $(1-t)^{b-a-1}$ is very large for $t$ near 1. We surmount this by using H$\ddot{{\rm o}}$lder's inequality, which states that if $p, q \ge 1$ with $1/p + 1/q = 1$ then \be \int_0^1 |f(t)g(t)|dt \ \le \ \left(\int_0^1 |f(t)|^pdt\right)^{1/p} \cdot \left(\int_0^1 |g(t)|^qdt\right)^{1/q}. \ee We let $f(t) = t^{a-1}$, $g(t) = (1-t)^{b-a-1}$, $p = \frac{16}{1-16\gep}$ and $q=\frac{16}{15+16\gep}$ in H$\ddot{{\rm o}}$lder's inequality, yielding \be \int_0^1 t^{a-1} (1-t)^{b-a-1}dt \ \le \ \left(\int_0^1 t^{(a-1)p}\right)^{1/p} \cdot \left(\int_0^1 (1-t)^{(b-a-1)q}dt\right)^{1/q}. \ee As $(b-a-1) q = -\frac{15}{15+16\gep} > -1$, the integral involving $1-t$ is just $O(1)$. The integral involving $t$ is $\left((a-1)p+1\right)^{-1/p} \ll k^{-1/16+\gep}$. For $\sigma < 1/4$, the $k^{2\sigma w}$ term from \eqref{eq:boundphicontourshiftw} will be smaller than $k^{1/16}$.
\end{rek}

\begin{lem}\label{lem:decayphi}
Let $\phi$ be an even Schwartz function such that ${\rm supp}(\hphi)
\subset (-\sigma,\sigma)$. Then \be \phi(t+iy) \ \ll_{n,\phi} \ e^{2\pi y
\sigma} \cdot (t^2 + y^2)^{-n}. \ee
\end{lem}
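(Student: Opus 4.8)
The plan is to exploit the compact support of $\hphi$ via a Paley--Wiener representation of $\phi$, followed by repeated integration by parts. Since $\phi$ is Schwartz, its Fourier transform $\hphi$ is Schwartz, and by hypothesis $\supp(\hphi)\subset(-\sigma,\sigma)$ is compact, so in fact $\hphi\in C_c^\infty(\R)$ and vanishes in a neighbourhood of $\pm\sigma$. In the Fourier convention used throughout the paper, $\hphi(\xi)=\int_\R\phi(u)e^{-2\pi iu\xi}\,du$, Fourier inversion reads $\phi(u)=\int_{-\sigma}^{\sigma}\hphi(\xi)e^{2\pi iu\xi}\,d\xi$ for real $u$. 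The right-hand side converges absolutely for every complex argument and, differentiating under the integral sign over the compact domain, defines an entire function; this is precisely the holomorphic continuation through which $\phi(t+iy)$ is to be understood. Thus I would start from
\be\label{eq:PWphidecay} \phi(t+iy) \ = \ \int_{-\sigma}^{\sigma}\hphi(\xi)\,e^{2\pi i(t+iy)\xi}\,d\xi . \ee
Since $\phi$ (equivalently $\hphi$) is even we have $\phi(-z)=\phi(z)$, so it suffices to treat $y\ge 0$, in which case $e^{2\pi y\sigma}=e^{2\pi|y|\sigma}$.

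First I would record the trivial estimate: on the interval of integration $|e^{2\pi i(t+iy)\xi}|=e^{-2\pi y\xi}\le e^{2\pi y\sigma}$ for $y\ge 0$ and $|\xi|\le\sigma$, so \eqref{eq:PWphidecay} gives $|\phi(t+iy)|\le e^{2\pi y\sigma}\int_{-\sigma}^{\sigma}|\hphi(\xi)|\,d\xi\ll_\phi e^{2\pi y\sigma}$. To extract polynomial decay in $|t+iy|$, I would integrate \eqref{eq:PWphidecay} by parts $2n$ times in $\xi$; since $\hphi$ vanishes together with all its derivatives near $\pm\sigma$, every boundary term vanishes, and each step contributes a factor $(2\pi i(t+iy))^{-1}$ while replacing $\hphi$ by $\hphi'$. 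After $2n$ steps,
\be \phi(t+iy) \ = \ \frac{1}{\left(2\pi i(t+iy)\right)^{2n}}\int_{-\sigma}^{\sigma}\hphi^{(2n)}(\xi)\,e^{2\pi i(t+iy)\xi}\,d\xi , \ee
and bounding the integrand exactly as before yields $|\phi(t+iy)|\ll_{n,\phi}|t+iy|^{-2n}e^{2\pi y\sigma}=(t^2+y^2)^{-n}e^{2\pi y\sigma}$.

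Finally I would combine the two bounds. If $t^2+y^2\ge 1$ the second estimate is already of the asserted shape. If $t^2+y^2\le 1$, then $(t^2+y^2)^{-n}\ge 1$, so the trivial estimate gives $|\phi(t+iy)|\ll_\phi e^{2\pi y\sigma}\le e^{2\pi y\sigma}(t^2+y^2)^{-n}$. Taking the larger of the two implied constants gives $\phi(t+iy)\ll_{n,\phi}e^{2\pi y\sigma}(t^2+y^2)^{-n}$ for all $t$ and all $y\ge 0$, hence for all $y$ by the reduction above, which is the claim and is exactly the form used in \eqref{eq:boundphicontourshiftw}. There is no essential obstacle in this argument; the only points deserving a sentence of care are the identification of the integral in \eqref{eq:PWphidecay} as the analytic continuation of $\phi$ and the behaviour near the origin, both handled above. (One could sharpen $(t^2+y^2)^{-n}$ to $(1+t^2+y^2)^{-n}$, but the stated form is all that is needed.)
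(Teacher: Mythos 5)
Your proposal is correct and follows essentially the same route as the paper: Fourier inversion over the compact support of $\hphi$, integration by parts $2n$ times with vanishing boundary terms, and the bound $|e^{2\pi i(t+iy)\xi}|\le e^{2\pi|y|\sigma}$. The only addition is your explicit handling of the region $t^2+y^2\le 1$ via the trivial estimate, which the paper elides but which changes nothing of substance.
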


\begin{proof} From the Fourier inversion formula, integrating by parts and the compact support of $\hphi$,
we have \bea \phi(t+iy) & \ = \ & \int_{-\infty}^\infty \hphi(\xi)
e^{2\pi i (t+iy)\xi} d\xi \nonumber\\ &=& \int_{-\infty}^\infty
\hphi^{(2n)}(\xi) \cdot (2\pi i (t+iy))^{-2n} e^{2\pi i
(t-iy)\xi}d\xi \nonumber\\ & \ll & e^{2\pi |y| \sigma}
(t^2+y^2)^{-n}. \eea
\end{proof}

\begin{lem}\label{lem:sizeprodpfnofu} Let \be A(x+iy) \ = \ \prod_p
\left(1 - \frac{p^{x+iy}-1}{p(p^{1+x+iy}+1)}\right). \ee If $x > -1$
then $A(x+iy) = O(1)$. \end{lem}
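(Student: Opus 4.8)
The plan is to run the standard ``$1 + O(p^{-1-\delta})$ forces a bounded Euler product'' argument, with the one subtlety being to keep the bound uniform in $y$. First I would estimate the general factor. For $x > -1$ and any prime $p$ we have $|p^{x+iy}-1| \le p^x + 1$, and since $1+x > 0$ forces $p^{1+x} \ge 2^{1+x} > 1$, the reverse triangle inequality gives $|p^{1+x+iy}+1| \ge p^{1+x}-1 > 0$; hence
\[
\left|\frac{p^{x+iy}-1}{p(p^{1+x+iy}+1)}\right| \ \le\ \frac{p^x+1}{p\,(p^{1+x}-1)} \ =:\ a_p ,
\]
a bound independent of $y$. (For $x=0$ this is just $2/(p(p-1))$, matching the bound used in the proof of Lemma~\ref{lem:Mphi}.) The key point is that $a_p \ll_x p^{-(1+\delta)}$ with $\delta = \min(1,1+x) > 0$: for $p$ large the numerator is $\asymp \max(p^x,1)$ and the denominator is $\asymp p^{2+x}$, so $a_p \asymp p^{-2}$ when $x \ge 0$ and $a_p \asymp p^{-(2+x)}$ when $-1 < x < 0$, and in both regimes the exponent exceeds $1$; in particular $\sum_p a_p < \infty$.

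Next I would split the product at a threshold $P_0 = P_0(x)$ large enough that $a_p \le \tfrac12$ for all $p > P_0$ (possible by the decay just established). The finite piece $\prod_{p \le P_0}$ is a product of finitely many factors, each bounded in modulus by $1 + a_p$, a $y$-independent constant, and hence is $O_x(1)$. For the tail, the elementary inequality $\log(1+u) \le 2u$ for $0 \le u \le \tfrac12$ gives
\[
\log\Bigl|\prod_{p > P_0}\Bigl(1 - \tfrac{p^{x+iy}-1}{p(p^{1+x+iy}+1)}\Bigr)\Bigr| \ \le\ \sum_{p>P_0}\log(1+a_p) \ \le\ 2\sum_{p>P_0} a_p \ \ll_x\ \sum_p p^{-(1+\delta)} \ <\ \infty ,
\]
uniformly in $y$. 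Multiplying the two bounds yields $A(x+iy) = O_x(1)$, as claimed. If one needs the estimate uniform for $x$ ranging over a half-line $x \ge x_0 > -1$ --- which is the situation in Lemma~\ref{lem:Mphi}, where $x = w$ grows with $k$ --- it suffices to note that for $x \ge 0$ one has $p^{1+x}-1 \ge \tfrac12 p^{1+x}$, whence $a_p \le 4/p^2$ (with $a_2 \le 1$), so $|A(x+iy)| \le 2\exp\!\bigl(4\sum_p p^{-2}\bigr)$ with an absolute implied constant.

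I do not anticipate a genuine obstacle. The only place any care is needed is the lower bound $|p^{1+x+iy}+1| \ge p^{1+x}-1$, which is precisely where the hypothesis $x > -1$ is used; everything else is routine convergence bookkeeping for Euler products.
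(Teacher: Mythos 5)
Your argument is correct and is essentially the paper's own proof, which simply bounds each Euler factor in modulus by $1+2\max(1,p^x)/p^{2+x}$ and notes the product converges for $x>-1$; your version just carries out the same termwise estimate more carefully (keeping the $p^{1+x}-1$ in the denominator and splitting off small primes). The added remark on uniformity for $x\ge 0$ is a nice touch, since the application in Lemma \ref{lem:Mphi} takes $x=w$ growing with $k$, but it does not change the method.
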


\begin{proof} We have \be |A(x+iy)| \ \le \ \prod_p\left(1 +
\frac{2\max(1,p^x)}{p^{2+x}}\right), \ee and the product is $O(1)$
as long as $x > -1$. \end{proof}


\section{Mertens' theorem and how we extend the sums.}\label{sec:mertenssumextension}

We examine other ways of completing the product of the second factor
in the definition of $R_{H_k^\ast(N)}(\alpha,\gamma)$, and the consequences of
this alternate completion on $R_{H_k^\ast(N)}'(r,r)$. Recall this second factor
contributes the product \bea\label{eq:prodratiosexpandyfixed}
\prod_{p \le y}
\left(1-\frac1{p^{1-\alpha+\gamma}}\right) \cdot
\left(1+\frac{p^{1-\alpha+\gamma}}{p^{1+2\gamma}(p^{1-\alpha+\gamma}-1)}\right)
\cdot \prod_{p > y} \left(1+\frac1{p^{1+2\gamma}}\right); \nonumber\\ \eea we wrote
it this way as we wanted to pull out factors of
$1/\zeta(1-\alpha+\gamma)$ before sending $y\to\infty$. We now
analyze this contribution in another manner. We do not pull out the
factors of $1/\zeta(1-\alpha+\gamma)$, and we keep $y$ fixed and
finite. To find the derivative with respect to $\alpha$ forces us to
analyze the following (we ignore the product over
$p > y$ for now as these terms have no $\alpha$ dependence):
\be\label{eq:alternateproductyfixed} \prod_{p\le y}
\left(1 - \frac{1}{p^{1-\alpha+\gamma}} +
\frac1{p^{1+2\gamma}}\right); \ee here the product over $p \le y$ follows
from brute force multiplication of the two terms in
\eqref{eq:prodratiosexpandyfixed}. Keeping $y$ fixed, we now
calculate the derivative of \eqref{eq:alternateproductyfixed} with
respect to $\alpha$: \bea & & \frac{d}{d\alpha} \left[\prod_{p\le y} \left(1 - \frac{1}{p^{1-\alpha+\gamma}} +
\frac1{p^{1+2\gamma}}\right)\right]\Bigg|_{\alpha=\gamma=r} \nonumber\\
& & = \ \prod_{p\le y} \left(1 -
\frac{1}{p^{1-\alpha+\gamma}} + \frac1{p^{1+2\gamma}}\right) \Bigg|_{\alpha=\gamma=r}
\nonumber\\ & & \ \ \ \ \ \cdot \ \frac{d}{d\alpha}
\log\left[\prod_{q\le y} \left(1 -
\frac{1}{q^{1-\alpha+\gamma}} + \frac1{q^{1+2\gamma}}\right)
\right]\Bigg|_{\alpha=\gamma=r} \nonumber\\ & & = \ \prod_{p \le y} \left(1-\frac1p+\frac1{p^{1+2r}}\right) \cdot
\sum_{q \le y} \left(1 - \frac{1}{q} +
\frac1{q^{1+2r}}\right)^{-1} \cdot \frac{-\log q}{q}.\ \ \ \ \ \ \eea It is here
that we must be careful in how we complete the sums (i.e., in how we
let $y\to\infty$). For $\Re(r) > 0$ we write \be \prod_{p \le y} \left(1-\frac1p+\frac1{p^{1+2r}}\right) \ =  \
\prod_{p \le y} \left(1-\frac1p\right) \cdot \prod_{p
\le y} \left(1+\frac1{(p-1)p^{2r}}\right); \ee as
$\Re(r) > 0$ the second factor is of size $1$. By Mertens' Theorem
we have \bea \prod_{p \le y} \left(1-\frac1p\right) \
= \ \frac{e^{-\gamma}}{\log y} \left(1 + O\left(\frac1{\log
y}\right)\right). \eea Thus this product over primes tries to make
our resulting term small; it is, however, balanced by the sum over
$q$ of $\log q/q$, as \be \sum_{q \le y} \frac{\log
q}{q} \ \sim \ \log y + O(1). \ee Completing the book-keeping, we
find a very similar result for the second term in Lemma
\ref{lem:Rprimealphagammar}. Sending $y\to\infty$ gives us the
second term in Lemma \ref{lem:Rprimealphagammar} but now multiplied
by $e^{-\gamma}$.

This is a fascinating observation. It shows that there are at least
\emph{two} natural answers, and their main terms differ by $e^{-\gamma}$. Which is correct? It will almost surely be impossible to tell, as this term contributes $O(1/N)$, and thus is \emph{well} beyond current technology!

Moreover, there is a lot of number theory and probability tied up in
$e^{-\gamma}$. Instead of the prime numbers, one could instead look
at `random' primes. There are many different models one can use to
generate sequences of `random' primes. In the most natural, the
Riemann hypothesis is true with probability one; however, here by RH
we mean $\pi(x) = {\rm Li}(x) + O(x^{1/2+\gep})$. In sieving
heuristics, the number of primes at most $x$ is about $2e^{-\gamma}
x /\log x$, where $2e^{-\gamma} \approx 1.12292$. It
is fascinating that the difference is equivalent to the differences
in viewing the primes as random independent events versus including
the congruence relations! See \cite{BK,Ha,HW,Gr,NW,Wu} for additional remarks on $e^{-\gamma}$.


\ \\

\end{document}